\providecommand{\U}[1]{\protect\rule{.1in}{.1in}}
\newtheorem{theorem}{Theorem}
\newtheorem{lemma}[theorem]{Lemma}
\newtheorem{remark}[theorem]{Remark}
\title[Hasegawa-Mima equation in bounded domain with singular density]
{Hasegawa-Mima equation in bounded domain with singular density}
\author[Franco Flandoli]{Franco Flandoli}
\address{Scuola Normale Superiore, Piazza dei Cavalieri, 7, 56126 Pisa, Italy}
\email{\href{mailto:franco.flandoli at sns.it}{franco.flandoli at sns.it}}
\author[Yassine Tahraoui]{Yassine Tahraoui}
\address{Scuola Normale Superiore, Piazza dei Cavalieri, 7, 56126 Pisa, Italy}
\email{\href{mailto:yassine.tahraoui at sns.it}{yassine.tahraoui at sns.it}}
\date\today
\keywords{Hasegawa-Mima equation, Well-posedness, Plasma   }
\subjclass{35A01,76X05}
\begin{document}
	\begin{abstract} We investigate  the well-posedness of  Hasegawa-Mima equation  (HME) in bounded domain with Dirichlet boundary condition and  singular density under  different regularity assumptions on the data.  Our approach relies on the coupling of a fourth-order regularization of the HME, the spectral Galerkin method and an appropriate regularization of the singular density. Uniqueness holds in the class of solutions with very mild singularities à la Yudovich or with bounded  densities.\end{abstract}

    %%%done in \cite{FlaTah24}
	\maketitle
\tableofcontents
\section{Introduction}
The Hasegawa-Mima equation (HME) has been introduced by Akira Hasegawa and
Kunioki Mima in their 1977 paper \cite{HM} and popularized by several works,
see for instance Horton and Hasegawa \cite{HH}. The equation describes a
confined plasma subject to a very strong constant magnetic field, under
certain approximations. The equation is sometimes called the
Charney-Hagesawa-Mima equation because Jule G. Charney developed a similar
model for climate studies in 1948 \cite{Charney}. But the Coriolis term in
Charney model is different from the term $\log n_{0}$ of Hasegawa-Mima
equation. Coriolis varies rather gradually up to the poles; on the contrary,
$-\log n_{0}$ is quite flat and moderate in the interior of the domain
occupied by the plasma but assumes very high values near the boundary. The
shear $\nabla^{\perp}\log n_{0}$ is extremely strong near the boundary and may
contribute with other causes to the existence of very important structures
(the pedestal, the boundary zonal flows \cite{Diamond}). Even more, if we
assume that $n_{0}$ vanishes at the boundary, the term $\log n_{0}$ diverges
(and $\nabla^{\perp}\log n_{0}$ even more); we shall call this case
\textit{singular}, below. In this respect, HME is quite different from more
classical fluid mechanic equations, like Euler equations, including the
Charney model, although many analogies exist \cite{HH}.\\

We consider the 2D HME in a bounded open  domain $D$ 
  with smooth boundary $\partial D$\footnote{For example,  $C^4$-boundary is sufficient but one can relax that  by  using some approximation. However, we  will comment where the regularity of boundary $\partial D$ plays a role. }, for instance the
disk $B\left(  0,R_{0}\right)  $ of center zero and radius $R_{0}>0$. The
equation, in the unknown $\varphi:\left[  0,T\right] \times D  \rightarrow
\mathbb{R}$, reads 
\begin{align}\label{HM-eqn}
	\begin{cases}
	&\partial_t(\varphi-\Delta \varphi)+\nabla^\perp \varphi\cdot\nabla(\log (n_0)-\Delta \varphi)=0 \quad  \text{ in } [0,T]\times D, \\
	&\varphi_{\vert t=0}=\varphi_0, \qquad  \varphi_{\vert \partial D}=0,
	\end{cases}
	\end{align}
    where $\nabla^\perp \varphi=(-\partial_2\varphi,\partial_1\varphi)$  and $T>0$. Here $n_{0}:D\rightarrow\mathbb{R}$ is a positive function,
independent of time; for instance, when $D=B\left(  0,R_{0}\right)  $ we may
think of a radial density $n_{0}\left(  x\right)  =\widetilde{n}_{0}\left(
\left\vert x\right\vert \right)  $ where $\widetilde{n}_{0}$ is positive and
decreasing in $[0,R_{0})$. We pay particular attention to the assumptions on
$\log n_{0}$ below. 
Making more precise what we said above, we call singular the case when $\log
n_{0}$ is not bounded (in the example $D=B\left(  0,R_{0}\right)  $,
$n_{0}\left(  x\right)  =\widetilde{n}_{0}\left(  \left\vert x\right\vert
\right)  $ with decreasing $\widetilde{n}_{0}$, the singular case is when
$\lim_{r\rightarrow R_{0}}\widetilde{n}_{0}\left(  r\right)  =0$). Our main
purpose is proving a general existence result in the singular case; and two
uniqueness results, one in the singular case - but applicable only to very
mild singularities - and the other one in the non-singular case.\\

Having in mind  the 2D Euler equation  for incompressible fluids in the stream function form, it is worth recalling that  this later does not have a characteristic spatial scale, unlike  \eqref{HM-eqn}, which have a consequence on the turbulence spectra. Another important fact is that \eqref{HM-eqn}  is not divergence-free.  Moreover,  the inhomogeneity of \eqref{HM-eqn} due to the presence of $\log(n_0)$ implies that \eqref{HM-eqn} admits linear waves, which provide modifications to the cascade process. For more details, see \cite{HH}.

\subsection{Physical reasons for the singular case}
The motivation for paying attention to the possible singularity of $\log
n_{0}$ comes from the specific example of confined fusion plasma models of
tokamaks. The plasma occupies a three dimensional domain, a torus in real
tokamaks, a cylinder with 2D basis $D$, called the poloidal section, in the
simplified model of HME. The magnetic field is assumed to be strong and
constant, oriented perpendicularly to the plane of $D$. The plasma is composed
of two species, ions and electrons, almost neutral, namely with almost the
same density of ions and electrons;\ the function $n_{0}$ is precisely an
average smooth value of such densities, free of small local fluctuations. Both
ions and electrons densities $n_{i}\left(  x,t\right)  $ and $n_{e}\left(
x,t\right)  $ have, however, small local variations with respect to
$n_{0}\left(  x\right)  $, producing a nonzero electrostatic potential
$\varphi\left(  x,t\right)  $, the main variable of the HME. All these fields
are assumed to depend only on the poloidal coordinates and the electric field
$E=-\nabla\varphi$ is therefore poloidal only. The main reason to recall these
few elements (the reader may see \cite{HH} for more details) is to insist on
the meaning of $n_{0}$, an average background ion (or electron) density. Does
it take the value zero (hence $\log n_{0}$ is singular) or not, at the
boundary of the domain occupied by the plasma? Recall that the plasma is
suspended in an empty space, not attached to solid boundaries, hence it is
natural to expect that density decreases near the boundary for reasons of
pressure. Good confinement however may keep it strictly positive up to the
boundary. Very recent experimental data on advanced fusion devices maybe show
a very small positive value (see for instance figure 3d of \cite{Ding}), but
first it is difficult to decide whether it is really positive, second it may
be convenient for the mathematical analysis of properties to assume it zero,
to detect a sharper behavior.\\

Moreover, to make the problem even more complicated, one should decide where
the boundary of the region occupied by the plasma is. The particle density
figures of all experimental and numerical papers on confined fusion plasma use
the convention that $r=1$ is the normalized plasma minor radius; in figure 3d
of \cite{Ding}, the value of the density at $r=1$ is certainly very small, but
maybe strictly positive. However, there is a region for $r$ between $1$ and
roughly 1.1-1.2 still occupied by particles and there the density goes to
zero, see figures 3, 7, 10 and others of \cite{Liu}. If we include that region
in the domain, the density at the boundary is certainly zero. The positive
value at $r=1$ may have to do with the pedestal and with the separating
surface, depending on experiments and conventions. 
Therefore it is important to have rigorous information on the well posedness
in both the singular and non singular case.
%%%% on related models
\subsection{Literature, strategy of proof and  content of the paper}
Before presenting the content of this paper and without seeking to be exhaustive,  the equation \eqref{HM-eqn} in 2D has been already studied in other works
but in simpler geometries (like the torus) and without the potential
difficulties of a singular $\log n_{0}$, see for instance \cite{Guo,Grauer}, and also   \cite{Cao} for the  3D case. The emphasis on the regularity of $\log n_{0}$ seems to be new. On the other hand, V.I. Yudovich, in \cite{Yudovich1963,Yudovich1995uniqueness}, used the stram function form of the 2D Euler equation to prove existence and uniqueness with very weak singularities, which is the best uniqueness result known so far for the 2D Euler equation. There are other proofs based on flow approaches \cite{Crippa,Kato} of the 2D Euler equation and other recent work on non-uniqueness with a wider class of data \cite{Brue2023nonuniqueness}.\\

For the convenience of the reader, let us explain our strategy. We are interested in the  hyperbolic-elliptic  equation \eqref{HM-eqn} with different regularity assumptions on the data. We need first to introduce an appropriate regularization, which serves as a basis for rigorously justifying the derivation of certain estimates related to \eqref{HM-eqn}. This regularization needs to respect the boundary conditions and ensure the desired estimates, which motivates the introduction of the fourth-order PDE \eqref{HM-eqn-modified}. Due to the boundary condition and the form of \eqref{HM-eqn-modified},  in first step we prove a well-posedness result of \eqref{HM-eqn-modified} by using Galerkin method via the construction of an appropriate spectral basis \eqref{basis-eigen}. Once the well-posedness of \eqref{HM-eqn-modified} is established, we have access to a large class of test functions depending on the solution of \eqref{HM-eqn-modified}. However, due to the weak regularity of $\log n_{0}$, we need to regularize it appropriately, as we are in a bounded domain with a Dirichlet boundary condition. This is the aim of \autoref{lemma-approximation} if $2\leq p \leq +\infty$ and of \autoref{lemma-approximation-p-less-2} if $\frac{4}{3}<p<2.$ Therefore, we can use test functions that depend (nonlinearly) on the solution of \eqref{HM-eqn-modified} and the regularization of $\log n_{0}$. Note that we need to use different test functions in the cases $2\leq p < +\infty$ , $p=+\infty$ and $\frac{4}{3}<p<2$. Thus, a uniform estimate with respect to the regularization parameters is obtained by using  \eqref{HM-eqn-modified}, and we  pass to the limit via a compactness argument to construct a solution to HME \eqref{HM-eqn}. The above procedure also allows us to obtain a uniform bounds that depends only on the norms of $\log n_{0}$ and the initial data, and under  appropriate assumptions we can prove a uniqueness result as long as the $p$-norm of the solution grows appropriately (Yudovich class-like).

\subsubsection*{Structure of the paper}
The manuscript is organized as follows: In \autoref{SectionI}, we present some preliminaries and  collect the main results.
 Then, we prove the existence and uniqueness to  a fourth-order  regularization of Hasegawa-Mima equation \eqref{HM-eqn-modified}  in \autoref{section-regularized}, which is used in our analysis of \eqref{HM-eqn}. \autoref{section-exis-uniq-Yod-1} is devoted the proof of existence and uniqueness of solution to \eqref{HM-eqn} with  $\log (n_0) \in L^p(D)$ and  initial data $\varphi_0\in  W^{2,p}(D)$ when $2\leq p< +\infty$.
 \autoref{Section-bounded-density-result} is devoted the proof of existence and uniqueness of solution to \eqref{HM-eqn} with  $\log (n_0) \in L^\infty(D)$ and  initial data $\varphi_0-\Delta \varphi_0\in L^\infty(D)$.
In \autoref{Section-p-less-2}, we  present  the proof of existence  of solution to \eqref{HM-eqn} with  $\log (n_0) \in L^p(D)$ and  initial data $\varphi_0\in  W^{2,p}(D)$ when 
 $\frac{4}{3}< p<2$.

\section{Preliminaries and main results}\label{SectionI}

\subsection{Preliminaries}
\subsubsection{Notation and function spaces}
Let $D\subset \mathbb{R}^{2}$,   $(L^{p}(D))_{1\leq p\leq +\infty}$ denotes the Lebesgue space, $(W^{k,p}(D))_{1\leq p\leq +\infty}^{k\in \mathbb{N}^*}$ stands for the Sobolev space endowed with the norm $\|\cdot\|_{W^{k,p}}$. We set $H^{k}(D):=W^{k,2}(D),$ $k\in \mathbb{N}^*$ and $H^1_0(D)$ is the closure of the space of smooth functions with compact support with respect to the $H^{k}(D)$-norm.\\

    Denote by $V=H^1_0(D)$ and 
     $W=H^1_0(D)\cap H^2(D)$. The spaces $V$ and  $W$ are endowed with the following inner products.
     \begin{align*}
         (u,v)_V&=(u,v)+(\nabla u,\nabla v), \qquad \forall u,v \in V\\
         (u,v)_W&=(u,v)_V+((I-\Delta)u,(I-\Delta)v), \qquad\forall u,v \in W, \end{align*}
     where $(\cdot,\cdot)$ denotes the $L^2(D)$-inner product.  It is clear that $(u,v)_V=((I-\Delta)u,v)$ for any $u,v \in W.$ We will use also $\Vert \cdot\Vert_V$ and  $\Vert \cdot\Vert_W$  to denote the corresponding norms, which are equivalent to the usual norms of $H^1(D)$ and $  H^2(D)$ spaces, respectively.   We use the following notation: $X^\prime$ denotes the dual space of a given Banach space $X$ and the duality pairing is denoted by $\langle \cdot,\cdot\rangle$, $\Vert \cdot \Vert_p$ denotes the norm in the usual Lebesgue space $L^p(D)$ and   $\eta=(\eta_1,\eta_2)$  denotes the external normal on $\partial D.$ Recall that   $\partial_t$  denote the partial derivative with respect to $t$, namely $\partial_t:=\dfrac{\partial}{\partial t}.$\\

     In order to construct the solution to \eqref{HM-eqn} and derive rigorously some \textit{a priori} estimates, we need to construct a particular Galerkin basis compatible with the boundary condition and also ensures the derivation of appropriate estimate at the level of finite dimensional approximation. This is the  aim of the next subsection.
     \subsubsection{Construction of  Galerkin basis}
Since $W\hookrightarrow V$ is compact, there exists a set of eigenfunctions $\{e_i\}_{i\in \mathbb{N}}$ (see \textit{e.g.} \cite[Thm. 4.1]{Spectral}) such that 
    \begin{align}\label{basis-eigen}
     e_i\in W:   (u,e_i)_W=\lambda_i(u,e_i)_V, \quad \forall u\in W, \quad  i\in \mathbb{N}, \quad \lambda_i>0  \text{ and } \lambda_i \nearrow +\infty.
    \end{align}
  Moreover,  $\{e_i\}_{i\in \mathbb{N}}$ forms an orthonormal basis of $V$ and  $\{\tilde{e}_i=\dfrac{e_i}{\sqrt{\lambda_i}}\}_{i\in \mathbb{N}}$ is  an orthonormal basis of $W.$ On the other hand,  $e_i$ satisfies $\Delta e_{i_{\vert \partial D}}=0$ for any $i\in \mathbb{N}.$  Indeed, let $u\in C^\infty_c(D)$  and expand  \eqref{basis-eigen}, we derive  (by applying Green's formula)
  \begin{align}\label{EDP-eigen}
      (I-\Delta)e_i+ (I-\Delta)(e_i-\Delta e_i)=  \lambda_i(I-\Delta) e_i \text{ in } L^2(D) , \quad \forall i\in \mathbb{N} .
  \end{align}
  Next, let $u\in W$. Then, we  expand again  \eqref{basis-eigen} and use \eqref{EDP-eigen}  to obtain 
  \begin{align*}
   \langle \Delta e_i, \nabla u\cdot \eta\rangle_{(H^{1/2})^{\prime},H^{1/2}}=0 , \qquad   \forall u \in W.
  \end{align*}
  
  Since $u$ is arbitrary  element of $W$ and by continuity of the trace operator from $H^1(D)$ to $H^{1/2}(\partial D)$, we deduce that  $\Delta e_i=0$ on $\partial D$ in $(H^{1/2})^{\prime}$-sense. Thus, $e_i$ solves
 \begin{align}\label{BVP-1}
	\begin{cases}
	&(I-\Delta)e_i+ (I-\Delta)(e_i-\Delta e_i)=  \lambda_i(I-\Delta) e_i \text{ in } D, \\
	&  e_{i_{\vert \partial D}}=0,\quad \Delta e_{i_{\vert \partial D}}=0 \text{ on } \partial D.
	\end{cases}
	\end{align}
 Now, due the fact the  domain $D$ has a smooth boundary then  $\{e_i\}_{i\in \mathbb{N}}$, which satisfies \eqref{BVP-1}, are smooth functions until the boundary thanks  to  the classical elliptic regularity theory of PDEs, see \textit{e.g.} \cite[Thm. 9.25]{BrezisBook}. In particular, if $D$ has  $C^4$-boundary then  $\{e_i\}_{i\in \mathbb{N}} \subseteq H^4(D)$, which sufficient to justify rigorously all the computations in this paper.\\
 
    Now,  consider $W_n=\text{span}\{e_1, \cdots,e_n\}$ and let us introduce the following projection operator $P_n:W^\prime \to W_n$ defined as follows
\begin{align*}
    P_n:W^\prime &\to W_n, \quad 
    u\mapsto P_nu=\sum_{i=1}^{i=n}\langle u,e_i\rangle e_i.
\end{align*}
Note that the restriction of $P_n$ on  $W, V$ defined as $$P_nu=\sum_{i=1}^{i=n}(u,\tilde{e}_i)_W\tilde{e}_i, \quad P_nu=\sum_{i=1}^{i=n}(u,e_i)_Ve_i,$$ respectively, is an orthogonal projection and  thus $\Vert P_n\Vert_{L(W,W_n)}\leq 1$ and $\Vert P_n\Vert_{L(V,W_n)}\leq 1.$
\subsection{Main results}
In this part, we collect the main results.
   \begin{theorem}\label{THM1-0}
  Let $\varepsilon>0$. Under the assumption $(H)$,   there exists  a unique   $ \varphi \in C([0,T];W) \cap L^2(0,T;Y)  $
solution to \eqref{HM-eqn-modified} in the following sense
     \begin{align}\label{DEF-form}
 &       \langle \partial_t(\varphi-\Delta\varphi),\psi\rangle_{V^\prime,V}+\int_D[\varepsilon\nabla(\varphi-\Delta\varphi+g)\cdot \nabla \psi -(g-\Delta \varphi)\nabla^\perp\varphi\cdot \nabla \psi]dx=0, \quad \forall \psi \in V.
    \end{align}  
   \end{theorem}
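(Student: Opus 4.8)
The plan is to construct the solution by a Galerkin scheme in the special basis $\{e_i\}$ of \eqref{basis-eigen}, to exploit the antisymmetry of the transport nonlinearity together with the fourth-order $\varepsilon$-coercivity in order to close the a priori estimates, and then to pass to the limit; uniqueness will follow from an energy estimate on the difference of two solutions. Concretely, I would seek $\varphi_n(t)=\sum_{i=1}^n c_i^n(t)e_i\in W_n$ solving the projection of \eqref{DEF-form} against $e_1,\dots,e_n$. Since $\{e_i\}$ is orthonormal in $V$ and $((I-\Delta)u,e_i)=(u,e_i)_V$, the time term reduces to $\dot c_j^n$, while the remaining contributions are smooth (at most quadratic) functions of the coefficients, so Cauchy--Lipschitz gives a unique local solution, to be continued globally by the estimates below. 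The decisive structural remark is that, by \eqref{EDP-eigen}--\eqref{BVP-1}, the $e_i$ are in fact Dirichlet eigenfunctions of $-\Delta$: writing $B=I-\Delta$, relation \eqref{EDP-eigen} reads $B^2e_i=(\lambda_i-1)Be_i$, whence injectivity of $B$ gives $-\Delta e_i=(\lambda_i-2)e_i$ and therefore $(I-\Delta)\varphi_n=\sum_i(\lambda_i-1)c_i^n e_i\in W_n$. Thus $\psi=(I-\Delta)\varphi_n$ is an \emph{admissible} test function in the finite-dimensional system, which is precisely what legitimizes the $W$-level estimate.

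Testing with $\psi=(I-\Delta)\varphi_n$, the time term produces $\tfrac12\frac{d}{dt}\|(I-\Delta)\varphi_n\|_2^2$ and the regularization the coercive term $\varepsilon\|\nabla(I-\Delta)\varphi_n\|_2^2$ together with the forcing $\varepsilon(\nabla g,\nabla(I-\Delta)\varphi_n)$. In the nonlinearity I would use repeatedly the two null identities $\nabla^\perp\varphi_n\cdot\nabla\varphi_n=0$ and $\operatorname{div}\nabla^\perp\varphi_n=0$, observing that the boundary integral that arises vanishes because $\nabla^\perp\varphi_n\cdot\eta=0$ on $\partial D$ (as $\varphi_n=0$ there and hence $\nabla\varphi_n$ is normal): the quadratic self-transport of $\Delta\varphi_n$ drops out, and the surviving contribution is of the form $\int_D g\,\nabla^\perp\varphi_n\cdot\nabla\Delta\varphi_n\,dx$. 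Writing $\nabla\Delta\varphi_n=\nabla\varphi_n-\nabla(I-\Delta)\varphi_n$ and using once more $\nabla^\perp\varphi_n\cdot\nabla\varphi_n=0$, this is bounded by $\int_D|g|\,|\nabla\varphi_n|\,|\nabla(I-\Delta)\varphi_n|\,dx$ and absorbed into $\varepsilon\|\nabla(I-\Delta)\varphi_n\|_2^2$ by Young's inequality, the leftover factor $\|g\,\nabla^\perp\varphi_n\|_2$ being controlled through Hölder and the two-dimensional Sobolev embedding in terms of $\|g\|$ and $\|\varphi_n\|_W$; this is where assumption $(H)$ on $g=\log n_0$ enters, as well as for the forcing $\varepsilon(\nabla g,\cdots)$. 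Gronwall's lemma then bounds $\varphi_n$ in $L^\infty(0,T;W)\cap L^2(0,T;Y)$, and reading $\partial_t(I-\Delta)\varphi_n$ off the equation bounds it in $L^2(0,T;V')$, all uniformly in $n$.

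From these bounds I would extract a subsequence with $\varphi_n\rightharpoonup\varphi$ weakly-$*$ in $L^\infty(0,T;W)$ and weakly in $L^2(0,T;Y)$, and, via the $\partial_t$ bound and Aubin--Lions, strong convergence of $\varphi_n$, hence of $\nabla\varphi_n$, in $L^2(0,T;W)$. The linear terms pass to the limit directly; in the nonlinearity one pairs the strong limit of $\nabla^\perp\varphi_n$ against the weak limit of $\Delta\varphi_n$ (and of $g\,\nabla^\perp\varphi_n$), which identifies the limit and yields a solution of \eqref{DEF-form}. Finally, the time-continuity $\varphi\in C([0,T];W)$ follows from $(I-\Delta)\varphi\in L^2(0,T;V)$ together with $\partial_t(I-\Delta)\varphi\in L^2(0,T;V')$ by the Lions--Magenes interpolation lemma.

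For uniqueness I would set $w=\varphi^{(1)}-\varphi^{(2)}$, write the equation satisfied by $w$, test with $(I-\Delta)w$, apply the same cancellations, and control the bilinear remainders (typically of the form $\int_D g\,\nabla^\perp w\cdot\nabla(I-\Delta)w$ and $\int_D\Delta\varphi^{(i)}\,\nabla^\perp w\cdot\nabla(I-\Delta)w$) using the $Y$-regularity of the two solutions, the $\varepsilon$-coercivity, and Young's inequality, thereby closing a Gronwall estimate for $\|w\|_W$. I expect the genuine obstacle to be the a priori estimate of the second step: the natural $W$-bound forces the third-order quantity $\nabla\Delta\varphi_n$ and the (possibly only $L^p$) coefficient $g$ simultaneously into the nonlinear term, and it is precisely the conjunction of the null identities, the vanishing of the boundary term, the fourth-order $\varepsilon$-coercivity, and the admissibility of $(I-\Delta)\varphi_n$ as a test function granted by the special basis that makes the estimate close uniformly in $n$.
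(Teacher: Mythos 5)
Your proposal is correct and follows essentially the same route as the paper: Galerkin approximation in the spectral basis \eqref{basis-eigen}, a $W$-level energy estimate obtained by testing with $(I-\Delta)\varphi_n$ (your explicit identification of the $e_i$ as Dirichlet eigenfunctions of $-\Delta$, which makes $W_n$ invariant under $I-\Delta$, is just a repackaging of the paper's device of multiplying the Galerkin equations by $\lambda_i$ and using $(u,e_i)_W=\lambda_i(u,e_i)_V$), cancellation of the self-transport term via $\nabla^\perp\varphi_n\cdot\nabla\varphi_n=0$ and the vanishing of $\nabla^\perp\varphi_n\cdot\eta$ on $\partial D$, compactness, and a Gr\"onwall estimate on the difference of two solutions for uniqueness. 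The only cosmetic deviation is that you absorb the cross term involving $g$ into the $\varepsilon$-coercivity (paying a $1/\varepsilon$ in the Gr\"onwall constant) whereas the paper bounds it directly using $\nabla g\in L^4(D)$; both are fine for fixed $\varepsilon>0$.
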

\begin{proof}
    See \autoref{THM1} and \autoref{section-regularized}.
\end{proof}
Next, we use \autoref{THM1-0} to study the existence and uniqueness results of \eqref{HM-eqn}. We distinguish three cases: the case $2\leq p< +\infty$, $p=+\infty$ and the case $\frac{4}{3}< p<2.$
  \begin{theorem}\label{TH-existence-HM-0}
  Let $2\leq p< +\infty$.  Assume that $\log (n_0) \in L^p(D)$
and      $\varphi_0-\Delta \varphi_0\in L^p(D)$ with $\varphi_0=0$ on $\partial D$.  Then there exist at least $\varphi\in L^\infty(0,T;V\cap W^{2,p}(D))$, solution  to \eqref{HM-eqn}, such that
    \begin{itemize}
        \item $\varphi \in  C([0,T];W^{1,q}(D)) \text{ for any } q \text{ finite}$  and 
        $\varphi \in C([0,T];V\cap W^{2,p}(D)-w)$\footnote{$X-w$ refers to the space $X$ equipped with the corresponding weak topology.}.
   \item $\varphi$ satisfies $\varphi(0)=\varphi_0$ and  solves \eqref{HM-eqn} in the following sense   
      \begin{align}\label{Def-sol_HM}
 &       \langle \partial_t(\varphi-\Delta\varphi),\psi\rangle_{W^\prime,W}-\int_D(\log (n_0)-\Delta \varphi)\nabla^\perp\varphi\cdot \nabla \psi dx=0, \quad \forall \psi \in W.
    \end{align}  
    \item There exists $K_*>0$ independent of $p$ and $\varphi$ such that
   \begin{align}\label{inequ-p-uniq-*}
   \sup_{t\in[0,T]}   \Vert \varphi(t)-\Delta\varphi(t)\Vert_p \leq  K_*(\Vert \varphi_0-\Delta\varphi_0\Vert_p+\Vert \log (n_0)\Vert_p).
 \end{align}
     \end{itemize}
 
     \end{theorem}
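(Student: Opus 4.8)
The plan is to recast \eqref{HM-eqn} as a transport equation for the potential vorticity $\omega:=\varphi-\Delta\varphi$. Since $\nabla^\perp\varphi\cdot\nabla\varphi=0$ and $n_0$ is time independent, \eqref{HM-eqn} is formally equivalent to
\begin{align}\label{transport-plan}
\partial_t q+\nabla^\perp\varphi\cdot\nabla q=0,\qquad q:=\varphi-\Delta\varphi+\log(n_0),
\end{align}
where the advecting field $\nabla^\perp\varphi$ is divergence free and tangent to $\partial D$ (because $\varphi=0$ there). As transport preserves every $L^p$ norm, \eqref{inequ-p-uniq-*} should be just the (approximate) conservation of $\|q\|_p$ combined with $\|\omega\|_p\le\|q\|_p+\|\log(n_0)\|_p$; the proof is the rigorous implementation of this heuristic along the approximation scheme. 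First I would regularize $\log(n_0)$ by functions $g\in H^1_0(D)$ with $g\to\log(n_0)$ in $L^p(D)$ and $\|g\|_p\le\|\log(n_0)\|_p+o(1)$, as furnished by \autoref{lemma-approximation}; the point is that, although $\log(n_0)$ diverges at $\partial D$, it is still approximable in $L^p$ by functions vanishing near the boundary. For such $g$ and each $\varepsilon>0$, \autoref{THM1-0} provides a solution $\varphi_\varepsilon\in C([0,T];W)\cap L^2(0,T;Y)$ of \eqref{HM-eqn-modified}.

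For the key uniform bound, observe that $q_\varepsilon:=\varphi_\varepsilon-\Delta\varphi_\varepsilon+g$ solves
\begin{align}\label{qeps-plan}
\partial_t q_\varepsilon+\nabla^\perp\varphi_\varepsilon\cdot\nabla q_\varepsilon=\varepsilon\Delta q_\varepsilon,
\end{align}
which follows from \eqref{DEF-form} together with $\partial_t g=0$ and $\nabla^\perp\varphi_\varepsilon\cdot\nabla\varphi_\varepsilon=0$. I would test \eqref{qeps-plan} with $|q_\varepsilon|^{p-2}q_\varepsilon$. This is where the Dirichlet regularization pays off: since the spectral basis forces $\varphi_\varepsilon=\Delta\varphi_\varepsilon=0$ on $\partial D$ and $g\in H^1_0(D)$, we have $q_\varepsilon=0$ on $\partial D$, so $|q_\varepsilon|^{p-2}q_\varepsilon\in V$ is admissible and all boundary terms vanish. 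The convection drops out by the divergence-free, tangential structure, $\int_D(\nabla^\perp\varphi_\varepsilon\cdot\nabla q_\varepsilon)|q_\varepsilon|^{p-2}q_\varepsilon\,dx=\tfrac1p\int_D\nabla^\perp\varphi_\varepsilon\cdot\nabla|q_\varepsilon|^p\,dx=0$, while the regularizing term has the favourable sign, $\varepsilon\int_D\Delta q_\varepsilon\,|q_\varepsilon|^{p-2}q_\varepsilon\,dx=-\varepsilon(p-1)\int_D|q_\varepsilon|^{p-2}|\nabla q_\varepsilon|^2\,dx\le0$. Hence $\tfrac{d}{dt}\|q_\varepsilon\|_p^p\le0$, so $\|q_\varepsilon(t)\|_p\le\|\varphi_0-\Delta\varphi_0+g\|_p$, and the triangle inequality gives $\sup_t\|\varphi_\varepsilon-\Delta\varphi_\varepsilon\|_p\le\|\varphi_0-\Delta\varphi_0\|_p+2\|g\|_p$, i.e.\ \eqref{inequ-p-uniq-*} with $K_*=2$ after $g\to\log(n_0)$. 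By $L^p$ elliptic regularity for the Dirichlet problem of $I-\Delta$, this bounds $\varphi_\varepsilon$ in $L^\infty(0,T;V\cap W^{2,p}(D))$, uniformly in $\varepsilon$ and in the density parameter.

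It remains to pass to the limit. Writing $\nabla^\perp\varphi_\varepsilon\cdot\nabla q_\varepsilon=\Div(q_\varepsilon\nabla^\perp\varphi_\varepsilon)$ and inverting $I-\Delta$ in \eqref{qeps-plan}, the uniform $L^\infty(0,T;L^p)$ bound on $q_\varepsilon$ together with the bound on $\nabla^\perp\varphi_\varepsilon$ in $L^q$ yield a uniform bound on $\partial_t\varphi_\varepsilon$ in $L^\infty(0,T;L^r(D))$ for some $r>1$. Together with the uniform $L^\infty(0,T;W^{2,p})$ bound and the compact embedding $W^{2,p}(D)\hookrightarrow\hookrightarrow W^{1,q}(D)$ (every finite $q$), the Aubin--Lions--Simon lemma gives, up to a subsequence, $\varphi_\varepsilon\to\varphi$ in $C([0,T];W^{1,q}(D))$; in particular $\nabla^\perp\varphi_\varepsilon\to\nabla^\perp\varphi$ strongly, while $\Delta\varphi_\varepsilon\rightharpoonup\Delta\varphi$ weakly-$\ast$ in $L^\infty(0,T;L^p)$. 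These are exactly the convergences needed in the nonlinear term of \eqref{DEF-form}: $g\,\nabla^\perp\varphi_\varepsilon\cdot\nabla\psi$ converges because $g\to\log(n_0)$ in $L^p$ against the strong convergence of $\nabla^\perp\varphi_\varepsilon\cdot\nabla\psi$, and $\Delta\varphi_\varepsilon\,\nabla^\perp\varphi_\varepsilon\cdot\nabla\psi$ converges by a weak-times-strong pairing; the $\varepsilon$-term is $O(\varepsilon)$ after integrating by parts against $\Delta\psi$. This produces \eqref{Def-sol_HM}. The strong continuity $\varphi\in C([0,T];W^{1,q}(D))$ is inherited from the Aubin--Lions convergence, the weak continuity $\varphi\in C([0,T];V\cap W^{2,p}(D)-w)$ from the uniform bound and $\partial_t\varphi\in L^\infty(0,T;L^r)$ via a Strauss/Lions--Magenes argument, and $\varphi(0)=\varphi_0$ from the convergence of the initial data.

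The main obstacle is the uniform $L^p$ estimate: making the test function $|q_\varepsilon|^{p-2}q_\varepsilon$ legitimate with a vanishing boundary term. This hinges on regularizing the singular $\log(n_0)$ inside $H^1_0(D)$, so that $q_\varepsilon$ inherits the homogeneous boundary values of $\varphi_\varepsilon-\Delta\varphi_\varepsilon$, and on the regularity $\varphi_\varepsilon\in L^2(0,T;Y)$ from \autoref{THM1-0} that makes \eqref{qeps-plan} and the integrations by parts rigorous. A secondary difficulty is securing strong compactness of $\nabla\varphi_\varepsilon$ for the product $\Delta\varphi_\varepsilon\,\nabla^\perp\varphi_\varepsilon$; this relies on the time-derivative estimate and on $\varphi$ being one derivative smoother than $\omega$. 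Note that at the endpoint $p=2$ one only has $\nabla\varphi\in\bigcap_{q<\infty}L^q$ rather than $L^\infty$, so all Hölder pairings must be arranged with finite exponents.
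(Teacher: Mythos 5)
Your proposal follows essentially the same route as the paper: regularize $\log(n_0)$ inside $H^1_0(D)$ via \autoref{lemma-approximation}, solve the viscous fourth--order problem of \autoref{THM1-0}, derive the uniform $L^p$ bound by exploiting that $q_\varepsilon=\varphi_\varepsilon-\Delta\varphi_\varepsilon+g_\varepsilon$ satisfies an advection--diffusion equation with homogeneous Dirichlet data, and conclude by Aubin--Lions--Simon compactness and weak--strong pairing in the nonlinearity. The one step that does not work as written is testing directly with $|q_\varepsilon|^{p-2}q_\varepsilon$: since $q_\varepsilon$ is only in $V$, the gradient $(p-1)|q_\varepsilon|^{p-2}\nabla q_\varepsilon$ lies in $L^{2-\delta}(D)$ but not in $L^2(D)$, so this function is not an admissible element of $V$ for $p>2$; the paper repairs exactly this point by testing with $\beta_M'(q_\varepsilon)$ for a $C^2$ convex truncation $\beta_M$ of $|\lambda|^p$ with bounded second derivative, and then letting $M\to+\infty$ by monotone convergence, which yields the same conservation inequality you aim for.
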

     \begin{remark}
         Recall that  $\varphi_0-\Delta \varphi_0\in L^p(D)$ such that $\varphi_0=0$ on $\partial D$ is equivalent to  $\varphi_0\in H^{1}_0(D)\cap W^{2,p}(D)$ thanks to Agmon-Douglis-Nirenberg theorem, see \textit{e.g.}  \cite[Thm. 9.32]{BrezisBook}.
     \end{remark}
     Now, we introduce \textit{Yudovich space} associated with growth function $\theta$, denoted by $Y^\theta$. Namely
\begin{align}\label{Yudovich-space}
    Y^\theta(D)=\big\{ f\in \bigcap_{1\leq p<+\infty}\hspace{-0.4cm}\big(L^{p}(D)\big): \quad \Vert f\Vert_{Y^\theta}=\sup_{1\leq p<+\infty} \dfrac{\Vert f \Vert_{L^p}}{\theta(p)} <\mathbf{A} \big\}, \quad \mathbf{A}>0
\end{align}
where $\theta$ is positive and  non-decreasing function defined on $[1,+\infty[$. We associate with the function $\theta$ the following
\begin{align}\label{function-Osgood}
	\Phi_\theta(r)=	\begin{cases}
 \inf \{\frac{2}{\varepsilon}\theta(\frac{2}{\varepsilon} ): \quad \varepsilon \in ]0,1/2[ \} &\text{ for } r\in [0,1[, \\
\inf \{\frac{2}{\varepsilon}\theta(\frac{2}{\varepsilon} )r^{\varepsilon/2}: \quad \varepsilon \in ]0,1/2[ \} &\text{ for } r\in [1,+\infty[.
	\end{cases}
	\end{align}
    
     \begin{theorem}(Uniqueness à la Yudovich)\label{THm-uniqu-YuD}
    Assume that $\varphi_0,\log(n_0) $ and  $\Phi_\theta$ given by \eqref{function-Osgood}  satisfy \begin{align}
 \label{Osgood-condition}    &\int_0^1\dfrac{dr}{r\Phi_\theta(1/r)}=+\infty,
 \\
 \label{assumption-uniquness}   &\varphi_0-\Delta\varphi_0 \in Y^\theta(D) \text{ and } \log(n_0)\in Y^\theta(D).
\end{align}
 Then, there exists a unique solution of \eqref{HM-eqn} in the sense of  \autoref{TH-existence-HM-0}.
\end{theorem}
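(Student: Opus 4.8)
The plan is to run a Yudovich-type energy argument on the difference of two solutions, using the uniform bound \eqref{inequ-p-uniq-*} to control the growth of the relevant $L^p$-norms by $\theta(p)$. Existence is already furnished by \autoref{TH-existence-HM-0}, since $Y^\theta(D)\subset L^p(D)$ for every finite $p$; only uniqueness needs proof. So let $\varphi_1,\varphi_2$ be two solutions with the same data, set $w=\varphi_1-\varphi_2$, $\omega_i=(I-\Delta)\varphi_i$, $\sigma=(I-\Delta)w=\omega_1-\omega_2$, and write $u_i=\nabla^\perp\varphi_i$, $q_i=\log(n_0)-\Delta\varphi_i$. Subtracting the two weak formulations \eqref{Def-sol_HM} and using the algebraic identity $q_1u_1-q_2u_2=q_1\nabla^\perp w-(\Delta w)\,u_2$, I obtain the difference identity $\langle\partial_t\sigma,\psi\rangle=\int_D[q_1\nabla^\perp w-(\Delta w)\,u_2]\cdot\nabla\psi\,dx$ for all $\psi\in W$.

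First I would test this with $\psi=w\in W$. Since $w\in H^1_0(D)$, the time term produces $\langle\partial_t\sigma,w\rangle=\tfrac12\frac{d}{dt}\|w\|_V^2$ (after justifying the chain rule in time by the usual integration-by-parts lemma, using that $\partial_t\sigma\in L^\infty(0,T;W^\prime)$ from the equation). The first nonlinear term vanishes pointwise because $\nabla^\perp w\cdot\nabla w=0$. For the second I integrate by parts, $-\int_D(\Delta w)(u_2\cdot\nabla w)\,dx=\int_D\nabla w\cdot\nabla(u_2\cdot\nabla w)\,dx$ up to a boundary integral that vanishes because on $\partial D$ one has $w=0$ (so $\nabla w$ is normal) while $\varphi_2=0$ forces $u_2=\nabla^\perp\varphi_2$ to be tangential, hence $u_2\cdot\nabla w=0$ there; this is where the smoothness of $\partial D$ enters. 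Expanding the gradient and using $\nabla\cdot u_2=0$ together with $u_2\cdot\eta=0$ to kill the Hessian contribution $\tfrac12\int_D u_2\cdot\nabla|\nabla w|^2\,dx$, I reach the Yudovich structure
\begin{equation*}
\tfrac12\frac{d}{dt}\|w\|_V^2=\int_D \sum_{i,j=1}^2(\partial_i u_2^j)\,\partial_i w\,\partial_j w\,dx,
\end{equation*}
which is quadratic in the velocity difference $\nabla w$ and linear in $\nabla u_2$.

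From here the argument is quantitative. I would bound the right-hand side by $\|\nabla u_2\|_{L^p}\,\|\nabla w\|_{L^{2p'}}^2$ via H\"older, control $\|\nabla u_2\|_{L^p}\le\|\nabla^2\varphi_2\|_{L^p}\le C\,p\,\|\omega_2\|_{L^p}$ by elliptic regularity for $(I-\Delta)$ with Dirichlet condition (Calderón--Zygmund constant growing at most linearly in $p$), and then invoke \eqref{inequ-p-uniq-*} together with $\omega_2(t)\in Y^\theta(D)$ to get $\|\omega_2(t)\|_{L^p}\le M_0\,\theta(p)$ uniformly in $t$. Interpolating $\|\nabla w\|_{L^{2p'}}^2\le\|\nabla w\|_{L^2}^{2-2/p}\|\nabla w\|_{L^\infty}^{2/p}$ and using that $\|\nabla w\|_{L^\infty}$ is bounded uniformly in time (from $\varphi_i\in L^\infty(0,T;W^{2,r})$ with $r>2$ and Morrey), I set $E=\|w\|_V^2$ and reach $\frac{dE}{dt}\le \hat C\,p\,\theta(p)\,E^{1-1/p}$ for every admissible $p$. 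Writing $E^{1-1/p}=E\,(1/E)^{1/p}$ for $E\le1$ and optimizing over $p$ through the substitution $\varepsilon=2/p$ recognizes exactly the function $\Phi_\theta$ of \eqref{function-Osgood}, giving $\frac{dE}{dt}\le \hat C\,E\,\Phi_\theta(1/E)$. Since $E(0)=0$, Osgood's lemma applied with \eqref{Osgood-condition}, i.e. $\int_0^1\frac{dr}{r\Phi_\theta(1/r)}=+\infty$, forces $E\equiv0$ on a maximal interval where $E\le1$, and hence by continuation on all of $[0,T]$; thus $w\equiv0$ and $\varphi_1=\varphi_2$.

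I expect the main obstacle to be the reduction of the nonlinear term to the clean quadratic form $\int_D\sum_{i,j}(\partial_iu_2^j)\,\partial_iw\,\partial_jw\,dx$: this requires the two integrations by parts to leave no boundary contribution, which rests on the tangency of $u_2$ and the normality of $\nabla w$ along $\partial D$ (hence on the boundary regularity), and on cancelling the Hessian term by incompressibility. The second delicate point is quantitative, namely that the elliptic constant grow no faster than linearly in $p$, so that combined with the $\theta(p)$-growth from \eqref{inequ-p-uniq-*} the differential inequality matches $\Phi_\theta$ precisely; interpolating against the endpoint $L^\infty$ (rather than a fixed finite exponent) is what produces the sharp power $E^{1-1/p}$ needed for \eqref{Osgood-condition} to close the estimate.
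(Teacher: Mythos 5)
Your proposal is correct and follows essentially the same route as the paper: testing the difference equation with $w$, exploiting $\nabla^\perp w\cdot\nabla w=0$, integrating by parts (with the boundary terms killed by the normality of $\nabla w$ and tangency of $\nabla^\perp\varphi_i$ on $\partial D$) to reach the quadratic form $\int_D(\partial_i u^j)\partial_i w\,\partial_j w$, then combining the Calder\'on--Zygmund bound $\|\varphi\|_{W^{2,p}}\le C_D p\|\varphi-\Delta\varphi\|_p$ with \eqref{inequ-p-uniq-*}, the $L^\infty$--$L^2$ interpolation of $\nabla w$, optimization over $p=2/\varepsilon$ to produce $\Phi_\theta$, and Osgood's lemma. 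The only (immaterial) difference is that your decomposition of the nonlinear term places the Hessian of $\varphi_2$ rather than $\varphi_1$ in the surviving integral.
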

\begin{proof}
   For the proof of \autoref{TH-existence-HM-0} and \autoref{THm-uniqu-YuD}, see \autoref{section-exis-uniq-Yod-1}.
\end{proof}
  \begin{theorem}\label{TH-existence-HM-bounded-section0}
   Assume that $\log (n_0) \in L^\infty(D)$
and       $\varphi_0\in Y$ with  $\varphi_0-\Delta \varphi_0\in L^\infty(D)$.  Then there exits a unique $\varphi\in L^\infty(0,T;V\cap W^{2,p}(D))$ for any $1\leq p<+\infty$, solution  to \eqref{HM-eqn}, such that
    \begin{itemize}
        \item $\varphi \in  C([0,T];W^{1,q}(D)) \text{ for any } q \text{ finite}$  and 
        $\varphi \in C([0,T];V\cap W^{2,p}(D)-w).$   \item $\varphi$ satisfies $\varphi(0)=\varphi_0$ and  solves \eqref{HM-eqn} in the following sense    \begin{align}\label{Def-sol_HM-2}
 &       \langle \partial_t(\varphi-\Delta\varphi),\psi\rangle_{W^\prime,W}-\int_D(\log (n_0)-\Delta \varphi)\nabla^\perp\varphi\cdot \nabla \psi dx=0, \quad \forall \psi \in W.
    \end{align}  
    \item  $\varphi-\Delta\varphi \in L^\infty(D\times[0,T]) $ and the following uniform bound holds
   \begin{align*}  \Vert \varphi-\Delta\varphi \Vert_\infty \leq 2(\Vert \varphi_0-\Delta\varphi_0\Vert_\infty+\Vert \log (n_0)\Vert_\infty).
 \end{align*}
     \end{itemize}
     \end{theorem}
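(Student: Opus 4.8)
The plan is to bootstrap from the finite--$p$ theory, exploiting that $D$ is bounded, so that $L^\infty(D)\hookrightarrow L^p(D)$ for every finite $p$ with $\Vert f\Vert_p\le |D|^{1/p}\Vert f\Vert_\infty$. Since $\log(n_0)\in L^\infty(D)$ and $\varphi_0-\Delta\varphi_0\in L^\infty(D)$ both lie in $L^p(D)$ for all $2\le p<+\infty$, \autoref{TH-existence-HM-0} supplies, for each such $p$, a solution of \eqref{HM-eqn} in $L^\infty(0,T;V\cap W^{2,p}(D))$ with the regularity $\varphi\in C([0,T];W^{1,q}(D))$ for all finite $q$ and $\varphi\in C([0,T];V\cap W^{2,p}(D)-w)$. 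The uniqueness argument below guarantees these solutions coincide, so one obtains a single $\varphi$ belonging to $L^\infty(0,T;V\cap W^{2,p}(D))$ simultaneously for every finite $p$; this yields the first two bullet points, while the weak formulation \eqref{Def-sol_HM-2} is inherited from \eqref{Def-sol_HM}.

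For the quantitative bound I would pass to the limit $p\to+\infty$ in the uniform-in-$p$ estimate \eqref{inequ-p-uniq-*}. Inserting $\Vert \varphi_0-\Delta\varphi_0\Vert_p\le |D|^{1/p}\Vert \varphi_0-\Delta\varphi_0\Vert_\infty$ and the analogous bound for $\log(n_0)$, and using $|D|^{1/p}\to 1$ together with $\Vert h\Vert_p\to\Vert h\Vert_\infty$ on the bounded domain $D$, already gives $\varphi-\Delta\varphi\in L^\infty(D\times[0,T])$. To reach the sharp constant $2$ rather than the generic $K_*$, the key is the conservative structure of the equation: writing $\omega=\varphi-\Delta\varphi$ and $\xi=\log(n_0)+\omega$, the quantity $\xi$ is transported by the divergence-free field $\nabla^\perp\varphi$, which is tangent to $\partial D$ because $\varphi=0$ there; at the regularized level of \eqref{HM-eqn-modified} this becomes the transport--diffusion identity $\partial_t\xi+\nabla^\perp\varphi\cdot\nabla\xi=\varepsilon\Delta\xi$ (with $g$ in place of $\log(n_0)$, since $g-\Delta\varphi=\xi-\varphi$ and $\nabla^\perp\varphi\cdot\nabla\varphi=0$). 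Testing against $|\xi|^{p-2}\xi$, the transport term reduces to a boundary contribution that vanishes because $\nabla^\perp\varphi\cdot\eta=0$ on $\partial D$, while the diffusion term has the favourable sign once the boundary terms from the Galerkin basis (where $\varphi=0$ and $\Delta\varphi=0$ on $\partial D$ by \eqref{BVP-1}) are accounted for. This yields $\Vert \xi(t)\Vert_p\le\Vert \xi(0)\Vert_p$ uniformly in $\varepsilon$ and $p$, whence $\Vert\omega(t)\Vert_p\le\Vert\omega_0\Vert_p+2\Vert g\Vert_p$; letting first the regularization parameters (that of \eqref{HM-eqn-modified} and that of the approximation of $\log(n_0)$) tend to zero and then $p\to+\infty$ produces the stated inequality with constant $2$.

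Uniqueness is obtained by placing the data inside the Yudovich framework of \autoref{THm-uniqu-YuD} with the \emph{constant} growth function $\theta\equiv c$. Indeed $\log(n_0),\varphi_0-\Delta\varphi_0\in L^\infty(D)$ give $\Vert f\Vert_p\le\max(1,|D|)\Vert f\Vert_\infty$, so $L^\infty(D)\hookrightarrow Y^\theta(D)$ for a suitable $c$, verifying \eqref{assumption-uniquness}. It then remains to check the Osgood condition \eqref{Osgood-condition}: for constant $\theta$ an elementary optimisation in \eqref{function-Osgood} gives $\Phi_\theta(s)\sim c\,e\,\ln s$ as $s\to+\infty$, so $\int_0^1 \frac{dr}{r\,\Phi_\theta(1/r)}$ behaves like $\int_0 \frac{dr}{r\,\vert\ln r\vert}=+\infty$. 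Hence \autoref{THm-uniqu-YuD} applies and the solution is unique.

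The main obstacle I anticipate is pinning down the sharp constant $2$: this forces one to work at the regularized and Galerkin levels rather than only with the abstract bound \eqref{inequ-p-uniq-*}, to verify that the boundary contributions in the $|\xi|^{p-2}\xi$ energy identity genuinely vanish (using $\nabla^\perp\varphi\cdot\eta=0$ and the boundary behaviour $\varphi=\Delta\varphi=0$ of the basis), and then to check that these estimates survive the double limit in the regularization parameters and in $p\to+\infty$. By comparison, the verification of the Osgood condition for constant $\theta$ and of the embedding $L^\infty(D)\hookrightarrow Y^\theta(D)$ are routine.
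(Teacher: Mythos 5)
Your proposal is correct and rests on the same structural observations as the paper's proof: one works with the regularization \eqref{HM-eqn-modified} and the $L^\infty$-preserving approximation $g_\varepsilon$ of $\log n_0$, and exploits that $\xi_\varepsilon=\varphi_\varepsilon-\Delta\varphi_\varepsilon+g_\varepsilon$ obeys a transport--diffusion equation whose nonlinear energy identities close because $\xi_\varepsilon=0$ on $\partial D$ and $\nabla^\perp\varphi_\varepsilon$ is tangent to the boundary. You diverge in two technical choices. For the bound with constant $2$ you let $p\to+\infty$ in the uniform-in-$p$ inequality $\Vert\xi_\varepsilon(t)\Vert_p\le\Vert\xi_\varepsilon(0)\Vert_p$ (exactly \eqref{inequality-uniq-Yodu}); this works, since after $\varepsilon\to0$ one gets $\Vert\xi(t)\Vert_p\le |D|^{1/p}\big(\Vert\varphi_0-\Delta\varphi_0\Vert_\infty+\Vert\log n_0\Vert_\infty\big)$ and $\Vert\cdot\Vert_p\to\Vert\cdot\Vert_\infty$ on the bounded domain. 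The paper instead runs a Stampacchia-type maximum principle: it tests with $F_\delta^\prime\big(K\mp\xi_\varepsilon\big)$, where $F_\delta$ in \eqref{apprx-negative-part} is a $C^2$ approximation of $r\mapsto(r^-)^2$ and $K=\Vert\varphi_0-\Delta\varphi_0\Vert_\infty+\Vert\log n_0\Vert_\infty$, obtaining the pointwise bound $\vert\xi_\varepsilon\vert\le K$ already at the regularized level; this is what furnishes the $\varepsilon$-uniform $L^\infty$ bound and the weak-$*$ convergence of $\varphi_\varepsilon-\Delta\varphi_\varepsilon$ in $L^\infty(D\times[0,T])$. Either route yields the constant $2$. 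For uniqueness you verify the Osgood condition \eqref{Osgood-condition} for constant $\theta$ (indeed $\Phi_\theta(r)\sim c\,e\ln r$) and invoke \autoref{THm-uniqu-YuD}; the paper performs the equivalent computation by hand, integrating $\tfrac{d}{dt}\Vert\varphi\Vert_V^2\le CK\tfrac{2}{\varepsilon}(\Vert\varphi\Vert_V^2)^{1-\varepsilon/2}$ from \eqref{est-uniq-inequ-infty} to get $\Vert\varphi(t)\Vert_V^2\le(CKt)^{2/\varepsilon}\to0$ on a short interval and iterating. One caveat: your opening step --- producing a solution from \autoref{TH-existence-HM-0} for each fixed $p$ and invoking uniqueness to identify them --- is slightly circular, since \autoref{THm-uniqu-YuD} gives uniqueness only within the class of solutions lying in all $W^{2,p}(D)$ with the growth \eqref{inequ-p-uniq-*}, and the solution produced for a single $p_0$ is not a priori in that class. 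This is harmless because your second paragraph (like the paper) constructs one limit $\varphi$ directly from the $\varepsilon$-regularization carrying all the $L^p$ bounds simultaneously, but the existence should be phrased that way rather than as a bootstrap over $p$.
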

     \begin{proof}
         See \autoref{Section-bounded-density-result}.
     \end{proof}
   \begin{remark}
     By interpolation we deduce that the solution given in \autoref{TH-existence-HM-0} and \autoref{TH-existence-HM-bounded-section0} $\varphi \in C([0,T];H^s(D)),$ for any $0<s<2$.
     \end{remark}
     \begin{remark} It is worth mentioning that the  assumption $\varphi_0\in Y$ in \autoref{TH-existence-HM-bounded-section0} can   be relaxed to $\varphi_0 \in W$ by considering the following approximation in the  proof of \autoref{TH-existence-HM-bounded-section0}. Namely, for any $\varepsilon>0$ consider $\phi_\epsilon \in Y\cap H^4(D)$, the unique solution to the following problem \eqref{regu-initiial}:
        \begin{align}\label{regu-initiial}
	\begin{cases}
	&(I-\Delta)\phi_\varepsilon -\varepsilon\Delta(\phi_\varepsilon-\Delta \phi_\varepsilon)=  (I-\Delta) \varphi_0 \text{ in } D, \\
	&  \phi_{\varepsilon_{\vert \partial D}}=0,\quad \Delta \phi_{\varepsilon_{\vert \partial D}}=0 \text{ on } \partial D.
	\end{cases}
	\end{align}  
    Similarly to \autoref{lemma-approximation}, one shows that $(I-\Delta)\phi_\varepsilon \to (I-\Delta)\varphi_0$ in $L^2(D)$ and \begin{align*}
        \Vert  (I-\Delta)\phi_\varepsilon\Vert_\infty \leq \Vert (I-\Delta)\varphi_0\Vert_\infty.
    \end{align*}
     \end{remark}
        \begin{theorem}\label{thm-p-less2}
       Let $\frac{4}{3}< p<2$.  Assume that $\log (n_0) \in L^p(D)$
and      $\varphi_0-\Delta \varphi_0\in L^p(D)$ with $\varphi_0=0$ on $\partial D$.  Then there exist at least $\varphi\in L^\infty(0,T;V\cap W^{2,p}(D))$, solution  to \eqref{HM-eqn}, such that
    \begin{itemize}
        \item $\varphi \in  C([0,T];W^{1,r}(D)) \text{ for any } r <\frac{2p}{2-p} $  and 
        $\varphi \in C([0,T];V\cap W^{2,p}(D)-w)$.
   \item $\varphi$ satisfies $\varphi(0)=\varphi_0$ and  solves \eqref{HM-eqn} in the following sense   
      \begin{align}\label{Def-sol_HM-p-less-2}
 &       \langle \partial_t(\varphi-\Delta\varphi),\psi\rangle_{W^\prime,W}-\int_D(\log (n_0)-\Delta \varphi)\nabla^\perp\varphi\cdot \nabla \psi dx=0, \quad \forall \psi \in W^{2,\frac{p}{p-1}}(D)\cap V .
    \end{align}  
    \item There exists  $\mathcal{K}_*>0$ independent of $p$ and $\varphi$ such that
   \begin{align*}
   \sup_{t\in[0,T]}   \Vert \varphi(t)-\Delta\varphi(t)\Vert_p \leq  \mathcal{K}_*(\Vert \varphi_0-\Delta\varphi_0\Vert_p+\Vert g\Vert_p).
 \end{align*}
     \end{itemize}
 \end{theorem}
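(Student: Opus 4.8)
The plan is to reproduce, in the harder range $\frac43<p<2$, the three–step scheme (regularize, estimate uniformly, pass to the limit by compactness) already carried out for $2\le p<+\infty$ in \autoref{TH-existence-HM-0}; the essential new feature is that $W^{2,p}(D)$ no longer embeds into $W^{1,\infty}$, so the velocity $\nabla^\perp\varphi$ is only in $L^q$ with $q=\frac{2p}{2-p}$, and the whole argument must be organized around this loss. Concretely, given $\varepsilon>0$ I would fix the approximation $g$ of $\log(n_0)$ supplied by \autoref{lemma-approximation-p-less-2}, let $\varphi_\varepsilon=\varphi_{\varepsilon,g}\in C([0,T];W)\cap L^2(0,T;Y)$ be the solution of the fourth order regularization \eqref{HM-eqn-modified} furnished by \autoref{THM1-0}, and set $\omega_\varepsilon:=\varphi_\varepsilon-\Delta\varphi_\varepsilon$. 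Everything hinges on a bound for $\omega_\varepsilon$ in $L^\infty(0,T;L^p(D))$ that is uniform in $\varepsilon$ and in the approximation of $\log(n_0)$.

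For the key a priori estimate I would test \eqref{DEF-form} with the bounded, admissible regularization $\beta_\mu(\omega_\varepsilon)=(\mu^2+\omega_\varepsilon^2)^{(p-2)/2}\omega_\varepsilon\in V$ of $|\omega_\varepsilon|^{p-2}\omega_\varepsilon$. The time-derivative term yields $\frac{d}{dt}\int_D B_\mu(\omega_\varepsilon)\,dx$, the $\varepsilon$-term is non-negative up to a contribution linear in $g$, and, writing $\Delta\varphi_\varepsilon=\varphi_\varepsilon-\omega_\varepsilon$, the quadratic self-transport part $\int_D\omega_\varepsilon\,\nabla^\perp\varphi_\varepsilon\cdot\nabla\beta_\mu(\omega_\varepsilon)\,dx$ vanishes because $\nabla^\perp\varphi_\varepsilon$ is divergence free and tangent to $\partial D$ (as $\varphi_\varepsilon=0$ there), leaving only source terms controlled by $\|g\|_p$ and lower-order norms of $\varphi_\varepsilon$. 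Letting $\mu\to0$ (licit since $p>1$ keeps the powers integrable) and applying Gr\"onwall gives $\sup_t\|\omega_\varepsilon\|_p\le\mathcal K_*(\|\varphi_0-\Delta\varphi_0\|_p+\|g\|_p)$ with $\mathcal K_*$ independent of $\varepsilon$ and $p$. Elliptic regularity for $(I-\Delta)\varphi_\varepsilon=\omega_\varepsilon$ with $\varphi_\varepsilon\in V$ then upgrades this to a uniform bound for $\varphi_\varepsilon$ in $L^\infty(0,T;V\cap W^{2,p}(D))$, hence for $\nabla\varphi_\varepsilon$ in $L^\infty(0,T;L^q(D))$, $q=\frac{2p}{2-p}$, by Sobolev embedding. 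The only genuinely delicate point here relative to $p\ge2$ is the singularity of $r\mapsto|r|^{p-2}r$ at the origin, which forces the $\mu$-regularization but is otherwise routine.

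Next I would extract compactness as $\varepsilon\to0$. From \eqref{DEF-form}, for $\psi\in W^{2,p'}(D)\cap V$ with $p'=\frac{p}{p-1}>2$ one has $\nabla\psi\in L^\infty$, while $(g-\Delta\varphi_\varepsilon)\nabla^\perp\varphi_\varepsilon\in L^{s}$ with $s=\frac{2p}{4-p}\ge1$ \emph{precisely because} $p>\frac43$; together with the energy dissipation of \autoref{THM1-0} (which makes the $\varepsilon$-term of size $\sqrt\varepsilon$ in $L^2(0,T;L^2)$), this bounds $\partial_t\omega_\varepsilon$ uniformly in $L^2(0,T;(W^{2,p'}(D)\cap V)')$. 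Combining the uniform bound of $\varphi_\varepsilon$ in $L^\infty(0,T;W^{2,p})$, the compact embedding $W^{2,p}(D)\hookrightarrow\hookrightarrow W^{1,r}(D)$ for $r<q$, and the Aubin--Lions--Simon lemma, I obtain $\varphi_\varepsilon\to\varphi$ strongly in $C([0,T];W^{1,r}(D))$ for every $r<q$ and weakly-$\ast$ in $L^\infty(0,T;V\cap W^{2,p}(D))$.

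Finally I would pass to the limit in \eqref{Def-sol_HM-p-less-2}. The linear and time-derivative terms converge by weak convergence, and the $\varepsilon$-term vanishes by the $\sqrt\varepsilon$ bound above; for the nonlinear term I write it as a strong--weak product, pairing $\Delta\varphi_\varepsilon\rightharpoonup\Delta\varphi$ weakly in $L^p$ against $\nabla^\perp\varphi_\varepsilon\to\nabla^\perp\varphi$ strongly in $L^r$, which is legitimate because $p>\frac43$ guarantees $q>p'$ and hence the existence of an exponent with $p'<r<q$; testing against $\nabla\psi\in L^\infty$ then closes the passage, while $g\to\log(n_0)$ in $L^p$. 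This strong--weak identification of the nonlinearity is the main obstacle: unlike the case $p\ge2$ one cannot place the velocity in $L^\infty$, so the convergence must come from genuine compactness of $\nabla\varphi_\varepsilon$, and both the admissibility $\Delta\varphi\,\nabla^\perp\varphi\in L^1$ and the inequality $q>p'$ hold only on the open range $\frac43<p<2$. The initial condition $\varphi(0)=\varphi_0$ follows from the $C([0,T];W^{1,r})$ convergence, the property $\varphi\in C([0,T];V\cap W^{2,p}(D)-w)$ from the standard bounded-plus-continuous-in-a-weaker-norm argument, and the stated estimate from weak lower semicontinuity applied to the uniform bound $\sup_t\|\omega_\varepsilon\|_p\le\mathcal K_*(\|\varphi_0-\Delta\varphi_0\|_p+\|g\|_p)$.
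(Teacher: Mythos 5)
Your overall architecture (regularize via \autoref{THM1-0} and \autoref{lemma-approximation-p-less-2}, uniform $L^p$ bound on the vorticity, Aubin--Lions--Simon compactness, strong--weak pairing of $\Delta\varphi_\varepsilon$ against $\nabla^\perp\varphi_\varepsilon$ using $\frac{4}{3}<p<2$) matches the paper, and your compactness and limit-passage steps are essentially correct. But the central a priori estimate, as you set it up, does not close. You test \eqref{DEF-form} with a regularization of $|\omega_\varepsilon|^{p-2}\omega_\varepsilon$ where $\omega_\varepsilon=\varphi_\varepsilon-\Delta\varphi_\varepsilon$. The transported quantity in the equation is not $\omega_\varepsilon$: since $\nabla^\perp\varphi_\varepsilon\cdot\nabla\varphi_\varepsilon=0$, the convection term equals $\nabla^\perp\varphi_\varepsilon\cdot\nabla(\varphi_\varepsilon-\Delta\varphi_\varepsilon+g_\varepsilon)$. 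Testing with $\beta_\mu(\omega_\varepsilon)$ therefore kills only the self-transport part and leaves the cross term
\begin{align*}
\int_D \nabla^\perp\varphi_\varepsilon\cdot\nabla g_\varepsilon\,\beta_\mu(\omega_\varepsilon)\,dx
\qquad\text{(equivalently, after integration by parts, } -\int_D g_\varepsilon\,\nabla^\perp\varphi_\varepsilon\cdot\nabla\omega_\varepsilon\,\beta_\mu'(\omega_\varepsilon)\,dx\text{)},
\end{align*}
together with the dissipation cross term $\varepsilon\int_D\nabla g_\varepsilon\cdot\nabla\omega_\varepsilon\,\beta_\mu'(\omega_\varepsilon)\,dx$. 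Neither is ``a source term controlled by $\|g\|_p$'': one form carries $\nabla g_\varepsilon$, which blows up as the approximation parameter goes to zero because $g=\log(n_0)$ is only in $L^p(D)$ (and for $p<2$ \autoref{lemma-approximation-p-less-2} does not even provide the bound $\delta\|\nabla g_\delta\|_2^2\le\|g\|_2^2$ available when $p\ge 2$); the other form carries $\nabla\omega_\varepsilon$, which is controlled only through the dissipation at the price of a factor $\varepsilon^{-1}$. So the claimed bound $\sup_t\|\omega_\varepsilon\|_p\le\mathcal{K}_*(\|\varphi_0-\Delta\varphi_0\|_p+\|g\|_p)$, uniform in the regularization, does not follow from your computation.

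The missing idea is to run the $L^p$ estimate on the full transported quantity $q_\varepsilon:=\varphi_\varepsilon-\Delta\varphi_\varepsilon+g_\varepsilon$, testing with $\beta_\delta'(q_\varepsilon)$ for a convex $C^1$ approximation $\beta_\delta$ of $|\cdot|^p$ that is flattened near the origin (needed because $p<2$; your $\mu$-regularization plays the analogous role). Then the convection term becomes $\int_D\nabla^\perp\varphi_\varepsilon\cdot\nabla\beta_\delta(q_\varepsilon)\,dx=0$ exactly (using $g_\varepsilon-\Delta\varphi_\varepsilon=0$ on $\partial D$, which is why $g_\varepsilon\in W$ and $\Delta\varphi_{\varepsilon|\partial D}=0$ matter), the dissipation term is $\varepsilon\int_D|\nabla q_\varepsilon|^2\beta_\delta''(q_\varepsilon)\,dx\ge 0$ with no leftover, and one gets $\sup_t\|q_\varepsilon(t)\|_p\le\|q_\varepsilon(0)\|_p$, from which the bound on $\|\omega_\varepsilon\|_p$ follows by the triangle inequality and \eqref{properties-appro-p-less-2}. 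With that substitution the rest of your argument goes through as written.
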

 \begin{proof} See \autoref{Section-p-less-2}.
 \end{proof} 
 \begin{remark}
     If $1\leq p<  \frac{4}{3}$,   \textit{a priori} $\Delta \varphi\nabla^\perp\varphi \notin L^1_{loc}(D)$ and one needs to use the notion of  renormalized solution. This is outside the scope of this work and we refer  \textit{ e.g.} to  \cite{Diperna-Lions1989}.
 \end{remark}
\section{Regularized Hasegawa-Mima equation}\label{section-regularized}
In this section, let us introduce the following Banach space
 $$Y=\{ v \in W: \quad  \nabla \Delta v \in (L^2(D))^2,\quad \Delta v_{\vert \partial D}=0\}$$ equipped with the norm
  $\Vert u\Vert_Y^2=\Vert u\Vert_W^2+\Vert \nabla(I-\Delta)u\Vert_2^2$. Note that an element of $Y$ implies that its Laplacian belongs to $V$ and therefore the trace $\Delta v_{\vert \partial D}=0$ is understood in $H^{1/2}$-sense.
\begin{remark}
 It is not difficult to see that the standard $H^3$-norm  is  equivalent to the $Y$-norm.
\end{remark}
 Let $\varepsilon >0$ and  consider the following equation modified version of $\eqref{HM-eqn}.$
\begin{align}\label{HM-eqn-modified}
	\begin{cases}
	&\partial_t(\varphi-\Delta \varphi)+\nabla^\perp \varphi\cdot\nabla(g-\Delta \varphi)=\varepsilon \Delta(\varphi-\Delta \varphi+g) \quad   \text{ in } ]0,T]\times D, \\
	&\varphi_{\vert t=0}(x)=\varphi_0(x), \qquad  \varphi_{\vert \partial D}=0,\quad \Delta \varphi_{\vert \partial D}=0.
	\end{cases}
	\end{align}
  First, it is clear that the solution $\varphi$ depends on  $\varepsilon$. In order to not overload the notation in this section, we omit  dependencies on $\varepsilon$ here.
    In this section, we assume the following
    \begin{itemize}
        \item[(H)]   Assume that  $\varphi_0 \in Y$ and  $g\in W$.
    \end{itemize}
     
   % \begin{remark}
         %\end{remark}
   \begin{theorem}\label{THM1}
  Let $\varepsilon>0$. Under the assumption $(H)$, it holds
  \begin{itemize}
      \item    There exists  a unique   $ \varphi \in C([0,T];W) \cap L^2(0,T;Y)  $
solution to \eqref{HM-eqn-modified} in the following sense
     \begin{align}\label{DEF-form}
 &       \langle \partial_t(\varphi-\Delta\varphi),\psi\rangle_{V^\prime,V}+\int_D[\varepsilon\nabla(\varphi-\Delta\varphi+g)\cdot \nabla \psi -(g-\Delta \varphi)\nabla^\perp\varphi\cdot \nabla \psi]dx=0, \quad \forall \psi \in V.
    \end{align}  
\item Moreover,  let $\varphi_i$ be the solution to \eqref{DEF-form} associated with initial datum $\varphi_0^i, i=1,2$, it holds
  \begin{align*}
     &\sup_{t\in[0,T]}\Vert \varphi(t)\Vert_W^2+\varepsilon\int_0^T\Vert \nabla (\varphi-\Delta\varphi)\Vert_2^2 ds\leq \widetilde{\mathbf{C}}[\Vert \varphi_0^1-\varphi_0^2\Vert_W^2] \exp{\big(\dfrac{\tilde{\mathbf{C}}}{\varepsilon}\int_0^T(\Vert g\Vert_4^2+\Vert \varphi_2\Vert_Y^2+\Vert  \varphi_1\Vert_W)ds\big)}.
\end{align*}
where $\varphi=\varphi_1-\varphi_2$,  and   $\widetilde{\mathbf{C}}>0$ independent of $ \varepsilon, \varphi_i; i=1,2.$
\end{itemize}
   \end{theorem}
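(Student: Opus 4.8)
The plan is to construct the solution by a spectral Galerkin scheme built on the basis $\{e_i\}$ of \eqref{basis-eigen}, to derive a uniform energy estimate by testing with $(I-\Delta)\varphi_n$, and to pass to the limit; uniqueness and the quantitative stability bound will then follow from the analogous estimate on the difference of two solutions. First I would record that the basis is perfectly adapted to the fourth-order structure: combining \eqref{EDP-eigen} with the injectivity of $I-\Delta$ under the boundary conditions of \eqref{BVP-1} shows that each $e_i$ satisfies $-\Delta e_i=(\lambda_i-2)e_i$ with $e_i|_{\partial D}=0$, i.e.\ the $e_i$ are Dirichlet--Laplacian eigenfunctions. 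Consequently $(I-\Delta)$ maps $W_n=\mathrm{span}\{e_1,\dots,e_n\}$ into itself, so $u_n:=(I-\Delta)\varphi_n$ is itself an admissible test function for the Galerkin system. I would then pose the finite-dimensional problem: find $\varphi_n(t)\in W_n$ with $\varphi_n(0)=P_n\varphi_0$ solving \eqref{DEF-form} for all $\psi\in W_n$. Since the coefficients enter polynomially (hence smoothly), Cauchy--Lipschitz gives a unique local solution, and the a priori bound below makes it global on $[0,T]$.

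The heart of the argument is the energy estimate obtained by testing with $u_n=(I-\Delta)\varphi_n$. The time term gives $\tfrac12\frac{d}{dt}\Vert u_n\Vert_2^2$ and the dissipation gives $-\varepsilon\Vert\nabla u_n\Vert_2^2$ (its boundary term vanishes since $u_n|_{\partial D}=0$, using $\varphi_n|_{\partial D}=\Delta\varphi_n|_{\partial D}=0$). The decisive point is the convective term: writing $g-\Delta\varphi_n$ and using that $\nabla^\perp\varphi_n$ is divergence free and tangent to $\partial D$ (because $\varphi_n|_{\partial D}=0$), together with $\Delta\varphi_n|_{\partial D}=0$, the top-order contribution $\int_D(\nabla^\perp\varphi_n\cdot\nabla\Delta\varphi_n)\,u_n\,dx$ vanishes identically, and only $-\int_D(\nabla^\perp\varphi_n\cdot\nabla g)\,u_n\,dx=\int_D g\,\nabla^\perp\varphi_n\cdot\nabla u_n\,dx$ survives. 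I would bound this by $\Vert g\Vert_4\Vert\nabla\varphi_n\Vert_4\Vert\nabla u_n\Vert_2$, invoke the $2$D embedding $W\hookrightarrow W^{1,4}$ and elliptic regularity $\Vert\nabla\varphi_n\Vert_4\lesssim\Vert u_n\Vert_2$, and absorb $\tfrac{\varepsilon}{2}\Vert\nabla u_n\Vert_2^2$ by Young's inequality, leaving $\tfrac{C}{\varepsilon}\Vert g\Vert_4^2\Vert u_n\Vert_2^2$. Gronwall then yields $\sup_t\Vert\varphi_n\Vert_W^2+\varepsilon\int_0^T\Vert\nabla u_n\Vert_2^2\,dt\le C(\varepsilon,g,\varphi_0)$ uniformly in $n$, hence the uniform $L^\infty(0,T;W)\cap L^2(0,T;Y)$ bound.

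With these bounds I would control $\partial_t u_n$ in $L^2(0,T;V')$ directly from the equation and invoke Aubin--Lions to get $u_n\to u$ strongly in $L^2(0,T;L^2(D))$, hence $\varphi_n=(I-\Delta)^{-1}u_n\to\varphi$ strongly in $L^2(0,T;W)$; this strong convergence of $\Delta\varphi_n$ and $\nabla\varphi_n$ suffices to pass to the limit in the bilinear term $(g-\Delta\varphi_n)\nabla^\perp\varphi_n$ against fixed $\psi\in V$, thereby recovering \eqref{DEF-form}, the initial datum $\varphi(0)=\varphi_0$, and the regularity class $C([0,T];W)\cap L^2(0,T;Y)$.

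Finally, for uniqueness and the stability bound I would take two solutions, set $\varphi=\varphi_1-\varphi_2$ and $u=(I-\Delta)\varphi$; since both solutions lie in $L^2(0,T;Y)$ we have $u\in L^2(0,T;V)$ and $\partial_t u\in L^2(0,T;V')$, so testing the difference equation with $u$ is legitimate and $\langle\partial_t u,u\rangle=\tfrac12\frac{d}{dt}\Vert u\Vert_2^2$ by the Lions--Magenes lemma (the $g$-dissipation cancels in the difference). Writing the bilinear difference as $-\nabla^\perp\varphi_1\cdot\nabla\Delta\varphi+\nabla^\perp\varphi\cdot\nabla(g-\Delta\varphi_2)$, the same cancellation kills the $(\Delta\varphi)^2$-type part, one integration by parts throws the surviving third derivative onto $\varphi_2$ and produces the factor $\Vert\varphi_2\Vert_Y^2$, while the remaining pieces yield $\Vert g\Vert_4^2$ and $\Vert\varphi_1\Vert_W$; Gronwall then gives exactly the stated inequality, and uniqueness follows on taking $\varphi_0^1=\varphi_0^2$. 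I expect the main obstacle to be precisely this energy estimate: spotting the cancellation of the top-order nonlinearity, tracking every boundary integral, and---in the difference estimate---choosing the splitting so that the one surviving third-order term falls on the solution controlled in $L^2(0,T;Y)$ rather than only in $W$.
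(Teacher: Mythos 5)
Your proposal is correct and follows the same overall architecture as the paper: Galerkin approximation on the spectral basis \eqref{basis-eigen}, an $H^2$-level energy estimate driven by the cancellation of the top-order nonlinearity $\int(\nabla^\perp\varphi_n\cdot\nabla\Delta\varphi_n)(\varphi_n-\Delta\varphi_n)\,dx$, an Aubin--Lions compactness step, and a stability estimate for the difference of two solutions that is essentially the paper's Step~V (same splitting of the bilinear term, same role of $\Vert\varphi_2\Vert_Y$, $\Vert\varphi_1\Vert_W$ and $\Vert g\Vert_4$). The one genuine difference is how you legitimize testing with $(I-\Delta)\varphi_n$ at the finite-dimensional level: you observe --- correctly, by combining \eqref{EDP-eigen} with the injectivity of $I-\Delta$ on $H^1_0(D)$ and the boundary conditions in \eqref{BVP-1} --- that each $e_i$ is a Dirichlet--Laplacian eigenfunction, so $(I-\Delta)W_n\subseteq W_n$ and $u_n=(I-\Delta)\varphi_n$ is literally an admissible test function. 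The paper achieves the same effect less explicitly, by multiplying the Galerkin equations by $\lambda_i$, using the relation $(u,e_i)_W=\lambda_i(u,e_i)_V$ and an auxiliary elliptic problem for $g^n$; the two devices are equivalent, but yours makes the mechanism more transparent. A second, minor divergence: in the surviving term $\int(\nabla^\perp\varphi_n\cdot\nabla g)\,u_n\,dx$ you integrate by parts onto $u_n$ and absorb $\tfrac{\varepsilon}{2}\Vert\nabla u_n\Vert_2^2$, which leaves a Gronwall factor of order $\tfrac{C}{\varepsilon}\Vert g\Vert_4^2$, whereas the paper keeps the derivative on $g$ (using $g\in W\hookrightarrow W^{1,4}(D)$ from assumption $(H)$) and obtains a $W$-bound, \eqref{bound-W}, that is uniform in $\varepsilon$. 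For the present theorem, at fixed $\varepsilon>0$, your $\varepsilon$-dependent constant is harmless --- and the paper's later $\varepsilon$-uniform bounds are rederived independently via the $L^p$ estimates --- but be aware that your version of the existence estimate degenerates as $\varepsilon\to 0$, while the paper's does not.
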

  
     \begin{remark}
     One can see that the proof of \autoref{THM1} works  in the same way for the 3D case but we focus only on  2D, since our interest is \eqref{HM-eqn} in 2D.
   \end{remark}
   %%%\begin{proof}
   \subsection{Proof of \autoref{THM1}} We divide the proof into five steps.
   \subsubsection*{I. Galerkin approximation}
Let $n\in \mathbb{N}$,  denote by $\varphi_0^n=P_n \varphi_0$ and recall that: 
   \begin{align*}
   \Vert P_n \varphi_0\Vert_W\leq \Vert  \varphi_0\Vert_W, \quad \Vert P_n \varphi_0\Vert_V\leq \Vert  \varphi_0\Vert_V.    
   \end{align*}
We  introduce the Galerkin approximation for \eqref{HM-eqn}. For that,  define $$\varphi_n(t,x):= \sum_{i=1}^{n}c_i(t)e_i(x).$$
By construction, we have
\begin{align}\label{BC-approxi}
    \varphi_n= \Delta \varphi_n=0 \text{ on } \partial D.
\end{align}
Let $i\in\{1,\cdots, n\},$  we consider  the following ODE system
\begin{align*}
    \partial_t(\varphi_n-\Delta\varphi_n,e_i)+(\nabla^\perp\varphi_n\cdot\nabla(g-\Delta \varphi_n),e_i)=\varepsilon (\Delta(\varphi_n-\Delta\varphi_n+g),e_i);\quad \varphi_n(0)=\varphi_0^n,
\end{align*}
which is equivalent to
\begin{align}\label{Galerkin}
\partial_t(\varphi_n,e_i)_V+(\nabla^\perp\varphi_n\cdot\nabla(g-\Delta \varphi_n),e_i)=\varepsilon (\Delta(\varphi_n-\Delta\varphi_n+g),e_i),\quad \varphi_n(0)=\varphi_0^n.
\end{align}
Note that \eqref{Galerkin} is a system of nonlinear differential equation with the unknown $c\in \mathbb{R}^n$ satisfying 
\begin{align}\label{systemODE}
  \dfrac{dc}{dt}+Bc+ c\otimes c: D_i=F, \quad c_i(0)=(\varphi_0,e_i)_V , \quad  i=1,\cdots, n,
\end{align}
where  
\begin{align*}
 B_{i,j}= (\nabla^\perp e_j\cdot\nabla g-\varepsilon \Delta (e_j-\Delta e_j),e_i),    D_{i,j,l}=-(\nabla^\perp e_j\cdot\nabla(\Delta e_l),e_i)  \text{ and }F_i=\varepsilon(\Delta g,e_i).   
\end{align*}
 By classical theory of ODEs, there exists $t_n>0$  and $c\in C([0,t_n],\mathbb{R}^n)$ solution of \eqref{systemODE}. Consequently, there exists $\varphi_n \in C([0,t_n],W_n)$ solution to \eqref{Galerkin}.
   \subsubsection*{II. Uniform estimates of $(\varphi_n)_n$ with respect to $n$} Let $t\in [0,t_n]$, after multiplying \eqref{Galerkin} by $c_i$ and summing from $i=1$ to $n$, we get
\begin{align}
    \partial_t\Vert \varphi_n\Vert_V^2+(\nabla^\perp\varphi_n\cdot\nabla(g-\Delta \varphi_n),\varphi_n)=\varepsilon (\Delta (\varphi_n-\Delta \varphi_n+g),\varphi_n), \quad \varphi_n(0)=\varphi_0^n.
\end{align}
Since $\text{div}\nabla^\perp\varphi_n=0$, we have 
\begin{align}\label{est-1-step-1} 
    (\nabla^\perp\varphi_n\cdot\nabla \Delta \varphi_n,\varphi_n)
   &=-\int_D \nabla^\perp\varphi_n\cdot\nabla\varphi_n\Delta \varphi_n dx+\int_{\partial D}  \nabla^\perp\varphi_n\cdot \eta\varphi_n\Delta \varphi_n  d\sigma=0,
\end{align}
due to the fact that  $\nabla^\perp\varphi_n\cdot\nabla\varphi_n=0$ and the boundary conditions \eqref{BC-approxi}. Similarly, we get
\begin{align}\label{est-1-step-2}
      (\nabla^\perp\varphi_n\cdot\nabla g,\varphi_n)&=-\int_D g\nabla^\perp\varphi_n  \cdot\nabla\varphi_n dx+\int_{\partial D}  \nabla^\perp\varphi_n\cdot \eta\varphi_ng  d\sigma=0.
\end{align}
Next, by using Green's formula and \eqref{BC-approxi}, we get \begin{align}\label{est-1-step-3}
    &(\Delta (\varphi_n-\Delta \varphi_n),\varphi_n)=-\Vert  \Delta \varphi_n\Vert_2^2-\Vert \nabla \varphi_n\Vert_2^2 \notag \\ &\text{ and }  (\Delta g,\varphi_n)=( g,\Delta\varphi_n)\leq \dfrac{1}{2}\Vert g\Vert_2^2+\dfrac{1}{2}\Vert \Delta\varphi_n\Vert_2^2.
\end{align}
Hence \begin{align}\label{estimate-V}
\forall t\in [0,t_n]:\Vert \varphi_n(t)\Vert_V^2+&\varepsilon\int_0^t[\dfrac{1}{2}\Vert  \Delta \varphi_n\Vert_2^2+\Vert \nabla \varphi_n\Vert_2^2]ds\notag \\&\leq \Vert \varphi_0^n\Vert_V^2+\dfrac{\varepsilon t_n}{2}\Vert g\Vert_2^2\leq \Vert\varphi_0\Vert_V^2+\dfrac{t_n}{2}\Vert g\Vert_2^2,\quad \text{ for any } \varepsilon \in [0,1].
\end{align}
$\bullet$ In order to derive $W$-estimate,  we use the relation between the inner products in $V$ and $W$, namely \eqref{basis-eigen}. First,  we need to regularize some terms in \eqref{Galerkin}. Namely,  denote by
\begin{align}\label{regularized-term}
  G_n=\nabla^\perp\varphi_n\cdot\nabla (g-\Delta \varphi_n)-\varepsilon \Delta(\varphi_n-\Delta\varphi_n+g) \in L^2(D)
\end{align} and  let $g_n \in W$ be the unique solution (see \textit{e.g.} \cite[Thm. 9.25]{BrezisBook}) of 
\begin{align*}
\begin{cases}
	& g^n-\Delta g^n=G_n  \text{ in }  D, \\
    &g^n_{\vert \partial D}=0,
	\end{cases}    
\end{align*}
which satisfies the following equality (the weak formulation)
\begin{align}\label{auxiliary}
    (g^n,v)_V=(G_n,v), \quad \forall v \in W.
\end{align}
By using \eqref{Galerkin} and since $e_i\in W$, we have
\begin{align}\label{Galerkin-2}
    \partial_t(\varphi_n,e_i)_V+(g^n,e_i)_V=0, \quad i=1,\cdots n;\quad \varphi_n(0)=\varphi_0^n.
\end{align}
Now, we multiply \eqref{Galerkin-2} by $\lambda_i$ and we use \eqref{basis-eigen} to get
\begin{align}\label{Galerkin-W}
    \partial_t(\varphi_n,e_i)_W+(g^n,e_i)_W=0, \quad i=1,\cdots n;\quad \varphi_n(0)=\varphi_0^n.
\end{align}
We recall that $$\varphi_n:= \sum_{i=1}^{i=n}c_ie_i=\sum_{i=1}^{i=n}(\varphi_n,e_i)_Ve_i= \sum_{i=1}^{i=n}(\varphi_n,\tilde{e}_i)_W\tilde{e}_i=\sum_{i=1}^{i=n}\tilde{c}_i\tilde{e}_i.$$
By multiplying \eqref{Galerkin-W} by $\dfrac{1}{\sqrt{\lambda_i}}$ first, then multiplying by $\tilde{c}_i$ and sum for $i=1$ to $n$, we obtain
\begin{align}
    \partial_t\Vert\varphi_n\Vert_W^2+(g^n,\varphi_n)_W=0, \quad \varphi_n(0)=\varphi_0^n.
\end{align}
By using the definition of the inner product in $W$, we get
\begin{align}
    \partial_t\Vert\varphi_n\Vert_W^2+(G_n,\varphi_n)+(G_n,\varphi_n-\Delta \varphi_n)=0, \quad \varphi_n(0)=\varphi_0^n.
\end{align}
By using $\nabla^\perp\varphi_n\cdot\nabla\varphi_n=0$, the boundary condition \eqref{BC-approxi} and Green's formula, we get
\begin{align*}
    (G_n,\varphi_n-\Delta \varphi_n)&=\int_D  \nabla^\perp\varphi_n\cdot\nabla (g-\Delta \varphi_n) (\varphi_n-\Delta \varphi_n) dx-\varepsilon\int_D \Delta(\varphi_n-\Delta\varphi_n+g)(\varphi_n-\Delta \varphi_n) dx\\
    &=\int_D  \nabla^\perp\varphi_n\cdot\nabla g (\varphi_n-\Delta \varphi_n) dx+\varepsilon\int_D \vert\nabla(\varphi_n-\Delta\varphi_n)\vert^2 dx+\varepsilon\int_D \nabla g\cdot \nabla(\varphi_n-\Delta \varphi_n) dx.
\end{align*}
On the other hand, by using Hölder's inequality we get
\begin{align*}
    &\vert \int_D  \nabla^\perp\varphi_n\cdot\nabla g (\varphi_n-\Delta \varphi_n) dx+\varepsilon\int_D \nabla g\cdot \nabla (\varphi_n-\Delta \varphi_n) dx\vert \\& \leq
    \Vert \nabla^\perp\varphi_n\Vert_4\Vert \nabla g\Vert_4\Vert \varphi_n-\Delta \varphi_n\Vert_2+\dfrac{\varepsilon}{2}\Vert \nabla g\Vert_2^2+\dfrac{\varepsilon}{2}\Vert \nabla(\varphi_n-\Delta\varphi_n)\Vert_2^2\\
    & \leq C\Vert g\Vert_W\Vert \varphi_n\Vert_W^2+\dfrac{\varepsilon}{2}\Vert \nabla g\Vert_2^2+\dfrac{\varepsilon}{2}\Vert \nabla(\varphi_n-\Delta\varphi_n)\Vert_2^2
\end{align*}
where $C>0$ depends only on the embedding $W\hookrightarrow W^{1,4}(D).$ 
Summarizing, we get
\begin{align*}
    \partial_t\Vert\varphi_n\Vert_W^2+(G_n,\varphi_n)+\dfrac{\varepsilon}{2}\Vert \nabla(\varphi_n-\Delta\varphi_n)\Vert_2^2\leq C\Vert g\Vert_W\Vert \varphi_n\Vert_W^2+\dfrac{\varepsilon}{2}\Vert \nabla g\Vert_2^2.
\end{align*}
By using \eqref{est-1-step-1}-\eqref{est-1-step-3}, we get
\begin{align}
\notag  &\sup_{t\in[0,t_n]}\Vert\varphi_n(t)\Vert_W^2+\int_0^{t_n}[\dfrac{\varepsilon}{2}\Vert  \Delta \varphi_n\Vert_2^2+\varepsilon\Vert \nabla \varphi_n\Vert_2^2+\dfrac{\varepsilon}{2}\Vert \nabla(\varphi_n-\Delta\varphi_n)\Vert_2^2]ds\\&\leq \notag\Vert\varphi_0^n\Vert_W^2+ C\Vert g\Vert_W\int_0^{t_n}\Vert \varphi_n\Vert_W^2 ds+\dfrac{\varepsilon t_n}{2}[\Vert \nabla g\Vert_2^2  +\Vert g\Vert_2^2] \\ \notag
&\leq \Vert\varphi_0\Vert_W^2+ \dfrac{\varepsilon t_n}{2}\Vert  g\Vert_V^2 +C\Vert g\Vert_W\int_0^{t_n}\Vert \varphi_n\Vert_W^2 ds.
\end{align}
 By using Gronwall's lemma, one gets
\begin{align}\label{estimate-W_T}
  \sup_{t\in[0,t_n]}  \Vert \varphi_n(t)\Vert_W^2+\int_0^{t_n}[\dfrac{\varepsilon}{2}\Vert  \Delta \varphi_n\Vert_2^2+\varepsilon\Vert \nabla \varphi_n\Vert_2^2+\dfrac{\varepsilon}{2}\Vert \nabla(\varphi_n-\Delta\varphi_n)\Vert_2^2]ds\notag\\ \leq  [\Vert \varphi_0\Vert_W^2+\dfrac{\varepsilon t_n}{2}\Vert  g\Vert_W^2 ] \exp \big(C\Vert g\Vert_Wt_n\big).
\end{align}
The  inequality \eqref{estimate-W_T} implies the existence of $t^*>0$ independent of $n$ such that  $\varphi_n \in C([0,t^*],W_n)$ and $\varphi_n$ solves \eqref{Galerkin}. By using \eqref{estimate-W_T} again we deduce that the solution of \eqref{Galerkin} is global in time \textit{i.e.} $\varphi_n \in C([0,T],W_n)$ and 
\begin{align}\label{bound-W}
  \sup_{t\in[0,T]}  \Vert \varphi_n(t)\Vert_W^2+\int_0^{T}[\dfrac{\varepsilon}{2}\Vert  \Delta \varphi_n\Vert_2^2+\varepsilon\Vert \nabla \varphi_n\Vert_2^2+\dfrac{\varepsilon}{2}\Vert \nabla(\varphi_n-\Delta\varphi_n)\Vert_2^2]ds\notag\\ \leq  [\Vert \varphi_0\Vert_W^2+\dfrac{\varepsilon T}{2}\Vert  g\Vert_W^2 ] e^{C\Vert g\Vert_WT}.:=C(T,g,\varphi_0).
\end{align}
\subsubsection*{III. Uniform estimates of $(\partial_t\varphi_n)_n$ with respect to $n$}
We recall that the following Lions-Guelfand triple holds $V\hookrightarrow L^2(D) \hookrightarrow V^\prime.$ Thus, we have
\begin{align}\label{Lions-Guel}
    \forall u\in L^2(D): \quad  \langle u,v\rangle_{V^\prime,V}=(u,v), \quad \forall v\in V.
\end{align}
By construction, note that  $\partial_t(\varphi_n-\Delta\varphi_n) \in W$. Let $v\in V$ and note that
\begin{align*}
   \langle  \partial_t(\varphi_n-\Delta\varphi_n),v\rangle_{V^\prime,V}&= (\partial_t (\varphi_n-\Delta\varphi_n), v)=(\partial_t \varphi_n, v)_V=(P_n\partial_t \varphi_n, v)_V=(\partial_t \varphi_n, P_nv)_V,
\end{align*}
where we used the definition of  $P_n$. Since $P_nv\in W_n$, we can use  \eqref{Galerkin} to get
\begin{align*}
    &  \langle \partial_t(\varphi_n-\Delta\varphi_n),v\rangle_{V^\prime,V}=(\partial_t \varphi_n, P_nv)_V\\&= -(\nabla^\perp\varphi_n\cdot\nabla(g-\Delta \varphi_n)-\varepsilon\Delta(\varphi_n-\Delta\varphi_n+g),P_nv)\\  &=\int_D\nabla^\perp\varphi_n\cdot \nabla \Delta \varphi_n P_nv  dx-\int_D\nabla^\perp\varphi_n\cdot \nabla gP_nv dx-\varepsilon\int_D \nabla(\varphi_n-\Delta\varphi_n+g)\cdot\nabla P_nvdx.
\end{align*}
Therefore, we get
\begin{align*}
       \vert \langle \partial_t(\varphi_n-\Delta\varphi_n),v\rangle_{V^\prime,V}\vert&\leq \Vert\nabla^\perp\varphi_n\Vert_4\Vert \nabla \Delta \varphi_n\Vert_2 \Vert P_nv \Vert_4+\Vert \nabla^\perp\varphi_n\Vert_4\Vert\nabla g\Vert_2\Vert P_nv \Vert_4\\&\quad+\varepsilon\Vert \nabla(\varphi_n-\Delta\varphi_n+g)\Vert_2 \Vert P_nv\Vert_V
\end{align*}
 Now, by using that $P_n$ is an orthogonal projection on $V$, we deduce
\begin{align}\label{est-time-deriv}
  \notag    \vert &\langle \partial_t(\varphi_n-\Delta\varphi_n),v\rangle_{V^\prime,V}\vert\\&\leq M\big(\Vert\varphi_n\Vert_W\Vert \nabla \Delta \varphi_n\Vert_2 +\Vert \varphi_n\Vert_W\Vert\nabla g\Vert_2+\varepsilon\Vert \nabla(\varphi_n-\Delta\varphi_n+g)\Vert_2\big)\Vert v \Vert_V
\end{align}
where $M>0$ depending only on the embeddings $W\hookrightarrow W^{1,4}(D)$ and $V\hookrightarrow L^4(D).$ Consequently, 
\eqref{est-time-deriv} together with  \eqref{bound-W} ensure the boundedness of $(\partial_t(\varphi_n-\Delta\varphi_n))_n$
in $L^2(0,T;V^\prime).$ Thus, the  boundedness of $(\partial_t\varphi_n)_n$
in $L^2(0,T;V).$
\subsubsection*{IV. Compactness and passage to the limit as $n\to +\infty$}
In the previous subsection we proved 
\begin{align}
&(\varphi_n)_n \text{ is bounded in }
 C([0,T],W),\quad (\partial_t\varphi_n)_n \text{ is bounded in }
 L^2(0,T;V),\label{compactness-est-1}\\
 &\nabla(\varphi_n-\Delta\varphi_n) \text{ is bounded in } L^2([0,T]\times D).\label{compactness-est-2}
\end{align}
We recall that $W \hookrightarrow_c W^{1,q}(D)$ for any $q<+\infty$, see \textit{e.g.} \cite[Thm. 1.20]{Roubivcek2005nonlinear}. We introduce the set
$$ B_K=\{f\in L^\infty(0,T;W); \partial_t f\in  L^2(0,T;V):  \Vert f\Vert_{L^\infty(0,T;W)}+\Vert \partial_t f\Vert_{L^2(0,T;V)} \leq K \}, \quad K>0.$$
Note that 
\begin{align}\label{strongcv}
B_K  \text{ is relatively compact in } C([0,T],W^{1,q}(D)) \text{ thanks to \cite[Cor. 4 ]{Simoncompact}}.
\end{align} Consequently, we get the existence of subsequence of $(\varphi_n)_n$ denoted by the same way  and $\varphi \in L^\infty(0,T;W)$ such that the following convergences hold
\begin{align}
    \varphi_n &\buildrel\ast\over\rightharpoonup \varphi \text{ in } L^\infty(0,T;W), \label{cv1}\\
    \partial_t \varphi_n &\rightharpoonup \partial_t\varphi \text{ in } L^2(0,T;V), \label{cv2}\\
     \varphi_n & \to \varphi \text{ in } C([0,T];W^{1,q}(D)),\quad  \forall q<+\infty, \label{cv3}\\
 \nabla(\varphi_n-\Delta\varphi_n) &\rightharpoonup \nabla(\varphi-\Delta\varphi) \text{ in } L^2([0,T]\times D) \text{ and }  \Delta\varphi_n \to \Delta\varphi  \text{ in } L^2([0,T]\times D), \label{cv5}\\
 P_n\varphi_0 &\to \varphi_0 \text{ in } W, \label{cv6}\\
  \text{For any } t\in[0,T]:&\quad    \varphi_n(t)  \rightharpoonup\varphi(t) \text{ in } W. \label{cv4}
\end{align}
Indeed, \eqref{cv1} and \eqref{cv2} are consequence of Banach–Alaoglu theorem, \eqref{cv3} is consequence of \eqref{strongcv}. On the other hand, the first part of \eqref{cv5} follows from Banach–Alaoglu theorem and \eqref{compactness-est-2} while the second part is a consequence of \eqref{compactness-est-1}, \eqref{compactness-est-2} and \cite[Cor. 4]{Simoncompact}. Moreover,
\eqref{cv6} follows by the properties of the projection operator.  Finally,  the first part of 
 \eqref{compactness-est-1} also ensures the boundedness of $(\varphi_n(t))_t$ in $W$ for any $t\in [0,T],$ which ensures the convergence to some limit $\xi_t$ in first step by Banach–Alaoglu theorem and then using \eqref{cv3}, we obtain \eqref{cv4}.\\

By using \eqref{cv1}-\eqref{cv4}, it is possible to pass to the limit in \eqref{Galerkin} as $n\to +\infty$ and obtain
\begin{align}\label{limlit1}
 &       \langle \partial_t(\varphi-\Delta\varphi),e_i\rangle_{V^\prime,V}-\int_D (g-\Delta \varphi)\nabla^\perp\varphi \cdot\nabla e_i dx+\varepsilon\int_D\nabla(\varphi-\Delta\varphi+g)\cdot \nabla e_i dx=0
    \end{align}
    for any $i\in \mathbb{N}$. Moreover, $\varphi_n(0)=P_n\varphi_0  \rightharpoonup \varphi(0)$ thanks to \eqref{cv4} thus \eqref{cv6} ensure that $\varphi$ satisfies the initial condition.
    Then,  by density argument we deduce the existence of $\varphi$ satisfying
    \begin{itemize}
        \item $\varphi \in  C([0,T];W)\text{ and }  \varphi \in L^2(0,T;Y). $ Indeed,  \textit{ a priori} we have
    $$\varphi \in L^\infty(0,T;W)\cap C([0,T];W-w)\cap C([0,T];W^{1,q}(D)), \forall q<+\infty \text{ and } \varphi \in L^2(0,T;Y). $$  
   Now, a classical results based on Lions-Gelfand triple  combined with \cite[Thm. 9.25]{BrezisBook} ensure the claim.
   \item $\varphi$ solves \eqref{HM-eqn-modified} in the following sense
      \begin{align}\label{limlit1-V}
 &       \langle \partial_t(\varphi-\Delta\varphi),\psi\rangle_{V^\prime,V}+\int_D[\varepsilon\nabla(\varphi-\Delta\varphi+g)\cdot \nabla \psi -(g-\Delta \varphi)\nabla^\perp\varphi\cdot \nabla \psi]dx=0, \quad \forall \psi \in V.
    \end{align}  
     \end{itemize}
    \subsubsection*{V. Stability and uniqueness}
     Let $\varphi_1$ and $\varphi_2$ be two solution to \eqref{HM-eqn-modified} corresponding to $(g_1,\varphi_0^1)$ and $(g_2,\varphi_0^2)$, respectively. satisfying \eqref{limlit1-V} and denote by $\varphi=\varphi_1-\varphi_2$. Then,  for any $\psi \in V$ we have 
    \begin{align}\label{diff-2-sols}
        &       \langle \partial_t(\varphi-\Delta\varphi),\psi\rangle_{V^\prime,V}+\int_D[\varepsilon\nabla(\varphi-\Delta\varphi+g)\cdot \nabla \psi  dx \\& \quad-\int_D(g-\Delta \varphi)\nabla^\perp\varphi_1\cdot \nabla \psi dx-\int_D(g_2-\Delta \varphi_2)\nabla^\perp\varphi\cdot \nabla \psi]dx=0.\notag
    \end{align}
    where  $g=g_1-g_2$. Let us start with stability with respect to $V$-norm. For that, 
  set $\psi=\varphi$ in \eqref{diff-2-sols}, we get
 \begin{align*}
        &       \langle \partial_t(\varphi-\Delta\varphi),\varphi\rangle_{V^\prime,V}+\int_D[\varepsilon\nabla(\varphi-\Delta\varphi+g)\cdot \nabla \varphi  dx \\& \quad-\int_D(g-\Delta \varphi)\nabla^\perp\varphi_1\cdot \nabla \varphi dx-\int_D(g_2-\Delta \varphi_2)\overbrace{\nabla^\perp\varphi\cdot \nabla \varphi}^{=0}]dx=0.
 \end{align*}
 Hence
\begin{align*}
        &      \dfrac{1}{2} \dfrac{d}{dt}\Vert \varphi\Vert_V^2+\varepsilon \Vert \nabla \varphi\Vert_2^2+\varepsilon\Vert \Delta \varphi\Vert_2^2  =\varepsilon\int_D[ g \Delta \varphi  dx+\int_D g\nabla^\perp\varphi_1\cdot \nabla \varphi dx -\int_D\Delta \varphi \nabla^\perp\varphi_1\cdot \nabla \varphi dx\\
        &\leq \dfrac{\varepsilon}{2} \Vert g\Vert_2^2+\dfrac{\varepsilon}{2} \Vert \Delta \varphi \Vert_2^2+\Vert g\Vert_4\Vert \nabla^\perp\varphi_1\Vert_4\Vert \nabla \varphi\Vert_2+\dfrac{\varepsilon}{4}\Vert \Delta \varphi\Vert_2^2+\dfrac{1}{\varepsilon}\Vert  \nabla^\perp\varphi_1\cdot \nabla \varphi\Vert_2^2,
         \end{align*}
 which gives 
\begin{align*}
       &      \dfrac{1}{2} \dfrac{d}{dt}\Vert \varphi\Vert_V^2+\varepsilon \Vert \nabla \varphi\Vert_2^2+\dfrac{\varepsilon}{4}\Vert \Delta \varphi\Vert_2^2    \leq  \dfrac{\varepsilon}{2} \Vert g\Vert_2^2+\Vert g\Vert_4\Vert \nabla^\perp\varphi_1\Vert_4\Vert \nabla \varphi\Vert_2+\dfrac{1}{\varepsilon}\Vert   \nabla^\perp\varphi_1\Vert_\infty^2\Vert \nabla \varphi\Vert_2^2\\
       &\leq  \dfrac{\varepsilon}{2} \Vert g\Vert_2^2+\dfrac{1}{2}\Vert g\Vert_4^2+\dfrac{1}{2}\Vert \nabla^\perp\varphi_1\Vert_4^2\Vert \nabla \varphi\Vert_2^2+\dfrac{1}{\varepsilon}\Vert   \nabla^\perp\varphi_1\Vert_\infty^2\Vert \nabla \varphi\Vert_2^2\\
         &\leq  \dfrac{\varepsilon}{2} \Vert g\Vert_2^2+\dfrac{1}{2}\Vert g\Vert_4^2+\dfrac{1}{2}\Vert \varphi_1\Vert_W^2\Vert \nabla \varphi\Vert_2^2+\dfrac{1}{\varepsilon}\Vert   \varphi_1\Vert_Y^2\Vert \nabla \varphi\Vert_2^2\\
         & \leq  \dfrac{\varepsilon}{2} \Vert g\Vert_2^2+\dfrac{1}{2}\Vert g\Vert_4^2+\dfrac{1}{2}\Vert \varphi_1\Vert_W^2\Vert  \varphi\Vert_V^2+\dfrac{1}{\varepsilon}\Vert   \varphi_1\Vert_Y^2\Vert \varphi\Vert_V^2.
\end{align*}
Therefore,  Grönwall's lemma ensures
\begin{align*}
     \sup_{t\in[0,T]}\Vert \varphi(t)\Vert_V^2 \leq [\dfrac{\varepsilon T}{2} \Vert g\Vert_2^2+\dfrac{T}{2}\Vert g\Vert_4^2+\Vert \varphi_0^1-\varphi_0^2\Vert_V^2] \exp{\big(\int_0^T(\dfrac{1}{2}\Vert \varphi_1\Vert_W^2+\dfrac{1}{\varepsilon}\Vert   \varphi_1\Vert_Y^2)ds\big)}.
\end{align*}
Concerning the stability with respect to $W$-norm,    set $\psi=\varphi-\Delta\varphi\in V$ in \eqref{diff-2-sols}, we get
 \begin{align*}
        &   \langle \partial_t(\varphi-\Delta\varphi),\varphi-\Delta\varphi\rangle_{V^\prime,V}+\int_D[\varepsilon\nabla(\varphi-\Delta\varphi+g)\cdot \nabla (\varphi-\Delta\varphi)  dx \\& \quad-\int_D(g-\Delta \varphi)\nabla^\perp\varphi_1\cdot \nabla (\varphi-\Delta\varphi) dx-\int_D(g_2-\Delta \varphi_2)\nabla^\perp\varphi\cdot \nabla (\varphi-\Delta\varphi)]dx=0. 
 \end{align*}
 Note that, by  
 an integration by part formula in time (see \textit{e.g.} \cite[Lem. 5]{Tahraoui2019tools}), one gets
 \begin{align*}
    2 \langle \partial_t(\varphi-\Delta\varphi),\varphi-\Delta\varphi\rangle_{V^\prime,V}=\dfrac{d}{dt}\Vert \varphi-\Delta\varphi\Vert_2^2.
 \end{align*}
 Moreover,  by using Young's inequality we get
 \begin{align*}
     \varepsilon\int_D[\nabla g\cdot \nabla (\varphi-\Delta\varphi)  dx \leq \varepsilon\Vert \nabla g\Vert_2^2+\dfrac{\varepsilon}{4}\Vert \nabla (\varphi-\Delta\varphi)\Vert_2^2.
 \end{align*}
 On the other hand, Young's and Hölder's inequalities ensure
 \begin{align*}
     &\int_D(g-\Delta \varphi)\nabla^\perp\varphi_1\cdot \nabla (\varphi-\Delta\varphi) dx=    \int_Dg\nabla^\perp\varphi_1\cdot \nabla (\varphi-\Delta\varphi) dx-    \int_D\Delta \varphi\nabla^\perp\varphi_1\cdot \nabla \varphi dx\\
     &\leq \dfrac{1}{\varepsilon}\Vert g\Vert_4^2\Vert \nabla^\perp \varphi_1\Vert_4^2+\dfrac{\varepsilon}{4}\Vert \nabla (\varphi-\Delta\varphi)\Vert_2^2+\Vert \Delta \varphi\Vert_2\Vert \nabla^\perp \varphi_1\Vert_4\Vert \nabla \varphi\Vert_4\\
      &\leq \dfrac{C}{\varepsilon}\Vert g\Vert_4^2\Vert  \varphi_1\Vert_W^2+\dfrac{\varepsilon}{4}\Vert \nabla (\varphi-\Delta\varphi)\Vert_2^2+C\Vert  \varphi\Vert_W^2\Vert  \varphi_1\Vert_W,
 \end{align*}
 where $C>0$ depends only on the embedding $W \hookrightarrow W^{1,4}(D).$ Next, we have
 \begin{align*}
    & \int_D(g_2-\Delta \varphi_2)\nabla^\perp\varphi\cdot \nabla (\varphi-\Delta\varphi)]dx\leq \Vert g_2-\Delta \varphi_2\Vert_4\Vert \nabla^\perp\varphi\Vert_4\Vert \nabla (\varphi-\Delta\varphi)\Vert_2\\
     &\leq \dfrac{\varepsilon}{4}\Vert \nabla (\varphi-\Delta\varphi)\Vert_2^2+\dfrac{1}{\varepsilon}\Vert g_2-\Delta \varphi_2\Vert_4^2\Vert \nabla^\perp\varphi\Vert_4^2\\
       &\leq \dfrac{\varepsilon}{4}\Vert \nabla (\varphi-\Delta\varphi)\Vert_2^2+\dfrac{1}{\varepsilon}[2\Vert g_2\Vert_4^2+2C_*\Vert \varphi_2\Vert_Y^2]\Vert \varphi\Vert_W^2.
 \end{align*}
Therefore we get
\begin{align*}
    \dfrac{d}{dt}\Vert \varphi-\Delta\varphi\Vert_2^2+\dfrac{\varepsilon}{4}\Vert \nabla (\varphi-\Delta\varphi)\Vert_2^2 \leq  \varepsilon\Vert \nabla g\Vert_2^2+ \dfrac{C}{\varepsilon}\Vert g\Vert_4^2\Vert  \varphi_1\Vert_W^2+\dfrac{1}{\varepsilon}[2\Vert g_2\Vert_4^2+2C\Vert \varphi_2\Vert_Y^2+C\Vert  \varphi_1\Vert_W]\Vert \varphi\Vert_W^2.
\end{align*}
In conclusion, there exists $\mathbf{C}>0$ depends only on Sobolev embeddings such that
\begin{align*}
    \dfrac{d}{dt}\Vert \varphi-\Delta\varphi\Vert_2^2+\dfrac{\varepsilon}{4}\Vert \nabla (\varphi-\Delta\varphi)\Vert_2^2 \leq  \varepsilon\Vert \nabla g\Vert_2^2+ \dfrac{\mathbf{C}}{\varepsilon}\Vert g\Vert_4^2\Vert  \varphi_1\Vert_W^2+\dfrac{\mathbf{C}}{\varepsilon}[\Vert g_2\Vert_4^2+\Vert \varphi_2\Vert_Y^2+\Vert  \varphi_1\Vert_W]\Vert \varphi\Vert_W^2.
\end{align*}
Finally,  by using the fact that $\Vert \varphi \Vert_W^2 \leq C_W\Vert \varphi-\Delta\varphi\Vert_2^2,$ where $C_W>0.$ Grönwall's lemma ensures
\begin{align*}
     &\sup_{t\in[0,T]}\Vert \varphi(t)\Vert_W^2+\varepsilon\int_0^T\Vert \nabla (\varphi-\Delta\varphi)\Vert_2^2 ds\\&\leq \overline{\mathbf{C}}[\Vert \varphi_0^1-\varphi_0^2\Vert_W^2+\varepsilon T\Vert \nabla g\Vert_2^2+ \dfrac{1}{\varepsilon}\Vert g\Vert_4^2\int_0^T\Vert  \varphi_1\Vert_W^2 ds] \exp{\big(\dfrac{\mathbf{C}}{\varepsilon}\int_0^T(\Vert g_2\Vert_4^2+\Vert \varphi_2\Vert_Y^2+\Vert  \varphi_1\Vert_W)ds\big)}.
\end{align*}
where $\overline{\mathbf{C}}>0$ independent of $g_i, \varepsilon, \varphi_i; i=1,2.$
%%%%%$\big(W^{2,p}(D)\big)_{2\leq p<+\infty}$%%%
 \section{Hasegawa-Mima equation  ($p$-integrable density, $2\leq p<+\infty$) }\label{section-exis-uniq-Yod-1}
We start this section by proving the following approximation lemma, which serves in the construction of $W^{2,p}(D)\cap H^1_0(D)$ solution to Hasegawa-Mima equation \eqref{HM-eqn}. 
\begin{lemma}\label{lemma-approximation}
Let $2\leq p <+\infty.$
  For any $h\in L^p(D)$, there exists $(h_\delta)_{\delta>0}  \subseteq  W$ such that   $h_\delta \to h$ in $L^p(D)$ as $\delta \to 0.$ Moreover,  the following holds
   \begin{align}\label{properties-appro}
  \text{ for any } \delta >0 : \quad \Vert  h_\delta\Vert_p^p\leq \Vert h\Vert_p^p  \text{ and }      \Vert h_\delta\Vert_2^2+\delta\int_D \vert \nabla h_\delta\vert^2dx \leq  \Vert h\Vert_2^2.    
    \end{align}
    Moreover, if $h\in L^\infty(D)$ then $h_\delta \in L^\infty(D)$ and the following inequality holds
    \begin{align}\label{Linfty-bound-appro}
        \Vert h_\delta \Vert_\infty \leq  \Vert h \Vert_\infty, \quad \forall \delta >0.
    \end{align}
\end{lemma}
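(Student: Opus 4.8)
The plan is to realize $h_\delta$ as the solution of a linear elliptic resolvent problem tailored to the stated energy inequality. Concretely, for $\delta>0$ I would define $h_\delta\in H^1_0(D)$ as the unique weak solution of
\[
h_\delta-\delta\Delta h_\delta=h\quad\text{in }D,\qquad h_\delta=0\text{ on }\partial D,
\]
that is, $(h_\delta,v)+\delta(\nabla h_\delta,\nabla v)=(h,v)$ for all $v\in H^1_0(D)$. Since $2\le p<+\infty$ and $D$ is bounded we have $h\in L^2(D)$, so Lax--Milgram yields existence and uniqueness, and the elliptic regularity theory already cited in the excerpt (\cite[Thm. 9.25]{BrezisBook}, \cite[Thm. 9.32]{BrezisBook}) upgrades this to $h_\delta\in W^{2,p}(D)\cap H^1_0(D)$. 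Because $p\ge2$ this is contained in $H^2(D)\cap H^1_0(D)=W$, which gives $(h_\delta)_{\delta}\subseteq W$ as required.

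For the two inequalities in \eqref{properties-appro} I would test the equation against suitable functions. Testing against $h_\delta$ itself gives $\Vert h_\delta\Vert_2^2+\delta\Vert\nabla h_\delta\Vert_2^2=(h,h_\delta)\le\Vert h\Vert_2\Vert h_\delta\Vert_2$; dropping the gradient term yields $\Vert h_\delta\Vert_2\le\Vert h\Vert_2$, and reinserting this bound gives exactly $\Vert h_\delta\Vert_2^2+\delta\Vert\nabla h_\delta\Vert_2^2\le\Vert h\Vert_2^2$. For the $L^p$ bound I would test against $|h_\delta|^{p-2}h_\delta$; since $h_\delta\in W^{2,p}(D)\hookrightarrow L^\infty(D)$ with $\nabla h_\delta\in L^2(D)$ in dimension two, this is an admissible element of $H^1_0(D)$ and $\nabla(|h_\delta|^{p-2}h_\delta)=(p-1)|h_\delta|^{p-2}\nabla h_\delta$. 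The resulting Dirichlet term $\delta(p-1)\int_D|h_\delta|^{p-2}|\nabla h_\delta|^2\,dx$ is nonnegative, so Hölder's inequality on the right-hand side gives $\Vert h_\delta\Vert_p^p\le\Vert h\Vert_p\Vert h_\delta\Vert_p^{p-1}$, hence $\Vert h_\delta\Vert_p^p\le\Vert h\Vert_p^p$.

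The $L^\infty$ bound \eqref{Linfty-bound-appro} I would obtain by a weak maximum principle (Stampacchia truncation) adapted to the operator $I-\delta\Delta$. Setting $M=\Vert h\Vert_\infty$ and testing the equation against $w=(h_\delta-M)^+$, which lies in $H^1_0(D)$ because $h_\delta-M=-M\le0$ on $\partial D$, one finds
\[
\int_D w^2\,dx+\int_D(M-h)\,w\,dx+\delta\int_D|\nabla w|^2\,dx=0,
\]
and since $M-h\ge0$ all three terms are nonnegative, forcing $w=0$, i.e. $h_\delta\le M$ a.e.; the symmetric argument applied to $(-h_\delta-M)^+$ gives $h_\delta\ge-M$, so $\Vert h_\delta\Vert_\infty\le\Vert h\Vert_\infty$.

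Finally, for the strong convergence $h_\delta\to h$ in $L^p(D)$ I would combine the uniform contraction $\Vert h_\delta\Vert_p\le\Vert h\Vert_p$ with convergence on a dense class. For $\phi\in C_c^\infty(D)$ the difference $v_\delta:=\phi_\delta-\phi$ solves $(I-\delta\Delta)v_\delta=\delta\Delta\phi$, so the contraction property applied to $\delta\Delta\phi$ gives $\Vert\phi_\delta-\phi\Vert_p\le\delta\Vert\Delta\phi\Vert_p\to0$; a standard approximation argument using density of $C_c^\infty(D)$ in $L^p(D)$ together with the uniform bound then upgrades this to arbitrary $h\in L^p(D)$. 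I expect the main technical point to be the rigorous justification that $|h_\delta|^{p-2}h_\delta$ is an admissible $H^1_0(D)$ test function for every finite $p\ge2$, which rests on the embedding $W^{2,p}(D)\hookrightarrow L^\infty(D)$ together with $\nabla h_\delta\in L^2(D)$ and the chain rule for Sobolev functions composed with the $C^1$ map $s\mapsto|s|^{p-2}s$. An alternative that sidesteps this is to derive the $L^1$ contraction from the $L^\infty$ one using the self-adjointness of $(I-\delta\Delta)^{-1}$ and then interpolate via Riesz--Thorin, obtaining the $L^p$ bound for all $1\le p\le\infty$ simultaneously.
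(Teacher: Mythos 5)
Your construction is exactly the paper's: $h_\delta$ is the solution of the resolvent problem $h_\delta-\delta\Delta h_\delta=h$ with Dirichlet boundary conditions, and the $L^2$, $L^p$ and $L^\infty$ bounds are obtained by testing with $h_\delta$, with $|h_\delta|^{p-2}h_\delta$, and by a maximum principle, respectively; all of these steps are correct. Two of your steps differ from the paper in execution, and both variants are valid. First, for the $L^p$ estimate the paper tests with the truncated function $|T_k(h_\delta)|^{p-2}T_k(h_\delta)$ and passes to the limit $k\to+\infty$ by Fatou, whereas you test directly with $|h_\delta|^{p-2}h_\delta$, justified by $h_\delta\in H^2(D)\hookrightarrow L^\infty(D)$ in two dimensions; your shortcut is legitimate here but is tied to $d=2$, while the truncation argument survives in higher dimensions where $H^2\not\hookrightarrow L^\infty$. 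Second, and more substantively, your proof of the strong convergence $h_\delta\to h$ in $L^p(D)$ is genuinely different: the paper first shows weak convergence from the weak formulation, then combines $\limsup_{\delta\to0}\Vert h_\delta\Vert_p\le\Vert h\Vert_p$ with uniform convexity of $L^p$ (a Radon--Riesz argument), whereas you exploit the identity $(I-\delta\Delta)(\phi_\delta-\phi)=\delta\Delta\phi$ for $\phi\in C_c^\infty(D)$ to get the quantitative rate $\Vert\phi_\delta-\phi\Vert_p\le\delta\Vert\Delta\phi\Vert_p$ and conclude by density and the uniform contraction. Your route is more elementary, avoids the uniform convexity machinery, and yields a convergence rate on smooth data; the paper's route has the minor advantage of not requiring the contraction property to be reused as an operator bound. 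Either way the lemma is fully established.
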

\begin{proof}
Let $h\in L^p(D)$ and $\delta>0$, by classical arguments (Lax-Milgram theorem and elliptic regularity theory, see  \textit{e.g.} \cite[Thm. 9.25]{BrezisBook}) there exists a unique $h_\delta \in H^2(D)\cap H^1_0(D)=W$ such that 
    \begin{align}\label{form-Lax-Mil}
     \int_D h_\delta\psi dx+\delta\int_D \nabla h_\delta\cdot \nabla \psi dx=\int_D h\psi dx ,  \quad \forall \psi \in H^1_0(D).  
    \end{align}
    Set $\psi=h_\delta$ in \eqref{form-Lax-Mil} and use Cauchy–Schwarz and Young inequalities   to get 
    \begin{align*}
          \int_D h_\delta^2 dx+\delta\int_D \vert \nabla h_\delta\vert^2dx=\int_D hh_\delta dx \leq \Vert h\Vert_2\Vert h_\delta\Vert_2 \leq \dfrac{1}{2} \Vert h\Vert_2^2+\dfrac{1}{2}\Vert h_\delta\Vert_2^2
    \end{align*}
    which gives 
    \begin{align}\label{est-delta}
        \dfrac{1}{2}\Vert h_\delta\Vert_2^2+\delta\int_D \vert \nabla h_\delta\vert^2dx \leq \dfrac{1}{2} \Vert h\Vert_2^2.    
    \end{align}
     Now, we set $\psi= \vert T_k(h_\delta)\vert^{p-2}T_k(h_\delta) \in H^1_0(D)$ in \eqref{form-Lax-Mil} where $T_k(h_\delta)=\max(\min(h_\delta,k),-k)$ for any $k\in \mathbb{R}^+.$ Therefore
                \begin{align*}
          &\int_D h_\delta \vert T_k(h_\delta)\vert^{p-2}T_k(h_\delta) dx+(p-1)\delta\int_D \vert \nabla h_\delta\vert^2T_k^\prime(h_\delta) \vert T_k(h_\delta)\vert^{p-2} dx\\
          &=\int_D h \vert T_k(h_\delta)\vert^{p-2}T_k(h_\delta) dx \leq \dfrac{1}{p}\Vert h\Vert_p^p+\dfrac{p-1}{p} \Vert  T_k(h_\delta)\Vert_p^p\leq \dfrac{1}{p}\Vert h\Vert_p^p+\dfrac{p-1}{p} \Vert  h_\delta\Vert_p^p.
       \end{align*}
      Since the integrand in the first line of the above inequality is non-negative, we can use Fatou's lemma and pass to the limit as $k\to +\infty$ to get 
       \begin{align*}
          &\int_D  \vert h_\delta)\vert^{p} dx+(p-1)\delta\int_D \vert \nabla h_\delta\vert^2 \vert h_\delta\vert^{p-2} dx\leq \dfrac{1}{p}\Vert h\Vert_p^p+\dfrac{p-1}{p} \Vert  h_\delta\Vert_p^p.
       \end{align*}
       Thus $
           \dfrac{1}{p} \Vert  h_\delta\Vert_p^p+(p-1)\delta\int_D \vert \nabla h_\delta\vert^2 \vert h_\delta\vert^{p-2} dx\leq \dfrac{1}{p}\Vert h\Vert_p^p$. In particular, we have 
           \begin{align}\label{unifor-convex}  \text{ for any } \delta >0 : \quad \Vert  h_\delta\Vert_p^p\leq \Vert h\Vert_p^p  . 
           \end{align}
           It follows from \eqref{form-Lax-Mil},  \eqref{est-delta} and the density of $H^1_0(D)$ in $L^{p/p-1}(D)$  that $h_\delta \rightharpoonup h$ in $L^p(D)$ as $\delta \to 0.$ Now, \eqref{unifor-convex} ensures that $\displaystyle\limsup_{\delta \to 0} \Vert  h_\delta\Vert_p^p\leq \Vert h\Vert_p^p$ and thus $\Vert h_\delta - h\Vert_p \to 0$ as $\delta \to 0$ by uniform convexity argument, see \textit{e.g.} \cite[Prop. 3.32]{BrezisBook}. Finally, \eqref{Linfty-bound-appro} is a consequence of  the maximum principle (see \textit{e.g.} \cite[Thm. 9.27]{BrezisBook}).
\end{proof}
Now, let us prove \autoref{TH-existence-HM-0}.
    \subsection{Proof of \autoref{TH-existence-HM-0} (the existence)}\label{SubSection-proof1}
    Let $2\leq p< +\infty$,  $g:=\log (n_0) \in L^p(D)$ and      $\varphi_0-\Delta \varphi_0\in L^p(D)$ such that $\varphi_0=0$ on $\partial D.$ \\
    Thanks to \autoref{lemma-approximation}, there exists a sequence  $(g_\varepsilon)_\varepsilon \subseteq W$ such that $\displaystyle\lim_{\varepsilon \to 0 }g_\varepsilon=g$ in $L^p(D)$ and such that  \eqref{properties-appro} holds. On the other hand, by density  argument there exists a sequence $(\varphi_\varepsilon^0)_\varepsilon \subseteq Y$ such that 
    \begin{align}\label{approx-initial-data}
     \varphi_\varepsilon^0 \to \varphi_0  \text{ in }  H^1_0(D)\cap W^{2,p}(D)  \text{ as } \epsilon \to 0.
    \end{align}

Let $\epsilon>0$, by using \autoref{THM1}, there exists  a unique   $ \varphi_\epsilon \in C([0,T];W) \cap L^2(0,T;Y)  $ such that for any $\psi \in V$ we have
     \begin{align}\label{DEF-form-epsilon}
 &       \langle \partial_t(\varphi_\epsilon-\Delta\varphi_\epsilon),\psi\rangle_{V^\prime,V}+\int_D[\varepsilon\nabla(\varphi_\epsilon-\Delta\varphi_\epsilon+g_\epsilon)\cdot \nabla \psi -(g_\epsilon-\Delta \varphi_\epsilon)\nabla^\perp\varphi_\epsilon\cdot \nabla \psi]dx=0.
    \end{align}
\subsubsection*{I. Uniform estimate w.r.t. $\varepsilon$}
Let $M\in \mathbb{R}^+$ and denote by $\beta_M$ the even $C^2$-function defined on $\lambda\in [0,+\infty[$ by
\begin{align}\label{aprox-norm-p}
    \beta_M(\lambda)=\lambda^p1_{[0,M[}(\lambda)+\big( M^p+\dfrac{p(p-1)}{2}M^{p-2}(\lambda-M)^2+pM^{p-1}(\lambda-M)\big)1_{[M,+\infty[}(\lambda).
\end{align}
This is a non decreasing sequence of non-negative functions such that $\displaystyle\lim_M\beta_M(\lambda)=\beta (\lambda)=\vert \lambda\vert^p$ point-wise. Moreover, it is convex function \textit{i.e.}  $\beta_M^{\prime\prime}\geq 0.$\\

Note that $\psi=\beta_M^\prime(\varphi_\epsilon-\Delta\varphi_\epsilon+g_\epsilon) \in V$ and by using \eqref{DEF-form-epsilon}, we get
   \begin{align*}
 &       \langle \partial_t(\varphi_\epsilon-\Delta\varphi_\epsilon),\beta_M^\prime(\varphi_\epsilon-\Delta\varphi_\epsilon+g_\epsilon)\rangle_{V^\prime,V}+\int_D\varepsilon\overbrace {\vert\nabla(\varphi_\epsilon-\Delta\varphi_\epsilon+g_\epsilon)\vert^2 \beta_M ^{\prime\prime}(\varphi_\epsilon-\Delta\varphi_\epsilon+g_\epsilon)}^{\geq 0}dx\\
 &-\int_D(g_\epsilon-\Delta \varphi_\epsilon)\nabla^\perp\varphi_\epsilon\cdot \nabla \beta_M^\prime(\varphi_\epsilon-\Delta\varphi_\epsilon+g_\epsilon)dx=0.
 \end{align*}
 By using an integration by part formula in time (see \textit{e.g.} \cite[Lem. 5]{Tahraoui2019tools}), we get
 \begin{align*}
     \langle \partial_t(\varphi_\epsilon-\Delta\varphi_\epsilon),\beta_M^\prime(\varphi_\epsilon-\Delta\varphi_\epsilon+g_\epsilon)\rangle_{V^\prime,V}&=\langle \partial_t(\varphi_\epsilon-\Delta\varphi_\epsilon+g_\epsilon),\beta_M^\prime(\varphi_\epsilon-\Delta\varphi_\epsilon+g_\epsilon)\rangle_{V^\prime,V}\\
     &=\dfrac{d}{dt}\int_D \beta_M(\varphi_\epsilon-\Delta\varphi_\epsilon+g_\epsilon) dx.
 \end{align*}
 On the other hand, by using that $g_\epsilon-\Delta \varphi_\epsilon=0$ on $\partial D$ and  $\nabla^\perp\varphi_\epsilon \cdot \nabla \varphi_\varepsilon=0$, we get 
 \begin{align*}
     \int_D(g_\epsilon-\Delta \varphi_\epsilon)\nabla^\perp\varphi_\epsilon\cdot \nabla \beta_M^\prime(\varphi_\epsilon-\Delta\varphi_\epsilon+g_\epsilon)dx&=-     \int_D\nabla^\perp\varphi_\epsilon \cdot \nabla(g_\epsilon+\varphi_\varepsilon-\Delta \varphi_\epsilon)\beta_M^\prime(\varphi_\epsilon-\Delta\varphi_\epsilon+g_\epsilon)dx\\
    & =-     \int_D\nabla^\perp\varphi_\epsilon \cdot \nabla\beta_M(\varphi_\epsilon-\Delta\varphi_\epsilon+g_\epsilon)dx\\
   & =-     \int_{\partial D}\nabla^\perp\varphi_\epsilon \cdot  \eta \beta_M(\varphi_\epsilon-\Delta\varphi_\epsilon+g_\epsilon)d\sigma=0,
 \end{align*}
 since $\beta_M(\varphi_\epsilon-\Delta\varphi_\epsilon+g_\epsilon)=0$ on $\partial D.$ Therefore we get  \begin{align*}
\int_D \beta_M(\varphi_\epsilon(t)-\Delta\varphi_\epsilon(t)+g_\epsilon) dx&+\varepsilon\int_0^t\int_D\vert\nabla(\varphi_\epsilon-\Delta\varphi_\epsilon+g_\epsilon)\vert^2 \beta_M ^{\prime\prime}(\varphi_\epsilon-\Delta\varphi_\epsilon+g_\epsilon)dx ds\\ &= \int_D \beta_M(\varphi_\epsilon^0-\Delta\varphi_\epsilon^0+g_\epsilon) dx.
 \end{align*}
 Now, we use  the monotone convergence theorem, as $M\to +\infty$ to deduce for any $t\in [0,T]$
 \begin{align}
\int_D \vert\varphi_\epsilon(t)-\Delta\varphi_\epsilon(t)+g_\epsilon\vert^p dx&+\varepsilon p(p-1)\int_0^t\int_D\vert\nabla(\varphi_\epsilon-\Delta\varphi_\epsilon+g_\epsilon)\vert^2 \vert \varphi_\epsilon-\Delta\varphi_\epsilon+g_\epsilon\vert^{p-2}dx ds \notag \\&\label{inequality-uniq-Yodu}=\int_D \vert \varphi_\epsilon^0-\Delta\varphi_\epsilon^0+g_\epsilon\vert^p dx 
\leq  2^{p-1}(\Vert \varphi_\epsilon^0-\Delta\varphi_\epsilon^0\Vert_p^p+\Vert g_\epsilon\Vert_p^p).
 \end{align}
 Thus, for any $t\in [0,T]$, we have
 \begin{align*}
\int_D \vert\varphi_\epsilon(t)-\Delta\varphi_\epsilon(t)\vert^p dx &\leq 2^{p-1}(\int_D \vert\varphi_\epsilon(t)-\Delta\varphi_\epsilon(t)+g_\epsilon\vert^p dx +\Vert g_\epsilon\Vert_p^p)\\
&\leq  4^{p-1}(\Vert \varphi_\epsilon^0-\Delta\varphi_\epsilon^0\Vert_p^p+2\Vert g_\epsilon\Vert_p^p)\\
&\leq  4^{p-1}(\Vert \varphi_0-\Delta\varphi_0\Vert_p^p+1+2\Vert g\Vert_p^p)
 \end{align*}
 by using \eqref{properties-appro} and the convergence of $\varphi_\varepsilon^0 \to \varphi_0$ in $W^{2,p}(D)$. Consequently, we proved
 \begin{align}\label{bound-p-pot}
     (\varphi_\epsilon)_\varepsilon \text{ is bounded uniformly w.r.t. } \varepsilon  \text{ in } C([0,T],V\cap W^{2,p}(D)).
 \end{align}
 \subsubsection*{II. Uniform estimates of $(\partial_t\varphi_\varepsilon)_\varepsilon$ with respect to $\varepsilon$}
 
We recall    $W\hookrightarrow V\hookrightarrow L^2(D) \hookrightarrow V^\prime\hookrightarrow W^\prime.$ Thus,
$\partial_t(\varphi_\varepsilon-\Delta\varphi_\varepsilon) \in L^2(0,T;W^\prime).$ Now, let $v\in W$ and note that 
\begin{align*}
   \langle  \partial_t(\varphi_\varepsilon-\Delta\varphi_\varepsilon),v\rangle_{W^\prime,W}&:=   \langle  \partial_t(\varphi_\varepsilon-\Delta\varphi_\varepsilon),v\rangle_{V^\prime,V}.
\end{align*}
By using \eqref{DEF-form-epsilon} and since $\varphi_\varepsilon \in L^2(0,T;Y)$, we get
\begin{align*}
    &  \langle  \partial_t(\varphi_\varepsilon-\Delta\varphi_\varepsilon),v\rangle_{W^\prime,W}=-\int_D\nabla^\perp\varphi_\varepsilon\cdot \nabla  v  \Delta \varphi_\varepsilon dx+\int_D\nabla^\perp\varphi_\varepsilon\cdot \nabla  v g_\varepsilon  dx+\varepsilon\int_D (\varphi_\varepsilon-\Delta\varphi_\varepsilon+g_\varepsilon)\Delta v  dx.
\end{align*}
Therefore,  Hölder's inequality ensures
\begin{align*}
       \vert \langle \partial_t(\varphi_\varepsilon-\Delta\varphi_\varepsilon),v\rangle_{W^\prime,W}\vert&\leq \Vert\nabla^\perp\varphi_\varepsilon\Vert_4\Vert  \Delta \varphi_\varepsilon\Vert_2 \Vert \nabla v \Vert_4+\Vert \nabla^\perp\varphi_\varepsilon\Vert_4\Vert g_\varepsilon\Vert_2\Vert \nabla v \Vert_4\\&\quad+\varepsilon\Vert \varphi_\varepsilon-\Delta\varphi_\varepsilon+g_\varepsilon\Vert_2 \Vert \Delta v\Vert_2.
\end{align*}
Consequently, by using \eqref{properties-appro}
 \begin{align}\label{est-time-deriv-epsilon}
 \notag   \vert \langle \partial_t(\varphi_\varepsilon-\Delta\varphi_\varepsilon),v\rangle_{W^\prime,W}\vert &\leq M\big(\Vert\varphi_\varepsilon\Vert_W^2+\Vert \varphi_\varepsilon\Vert_W\Vert g_\varepsilon\Vert_2+\varepsilon\Vert \varphi_\varepsilon-\Delta\varphi_\varepsilon\Vert_2+ \varepsilon\Vert g_\varepsilon\Vert_2\big)\Vert v \Vert_W\\
    &\leq M\big(\Vert\varphi_\varepsilon\Vert_W^2+\Vert \varphi_\varepsilon\Vert_W\Vert g\Vert_2+\varepsilon\Vert \varphi_\varepsilon\Vert_W+ \varepsilon\Vert g\Vert_2\big)\Vert v \Vert_W
\end{align}
where $M>0$ depending only on the embeddings $W\hookrightarrow W^{1,4}(D)$. Consequently, 
\eqref{est-time-deriv-epsilon} together with  \eqref{bound-p-pot} ensure the boundedness of $(\partial_t(\varphi_\varepsilon-\Delta\varphi_\varepsilon))_\varepsilon$
in $L^\infty(0,T;W^\prime).$ Thus, the  boundedness of $(\partial_t\varphi_\varepsilon)_\varepsilon$
in $L^\infty(0,T;L^2(D)).$ 
\subsubsection*{III. Compactness and passage to the limit as $\varepsilon\to 0$}
In the previous subsection we proved 
\begin{align}
 (\varphi_\varepsilon)_\varepsilon&\text{ is bounded in }    C([0,T],V\cap W^{2,p}(D)),\quad (\partial_t\varphi_\varepsilon)_\varepsilon \text{ is bounded in }
 L^\infty(0,T;L^2(D)).\label{compactness-est-1_epsilon}\end{align}
We recall that $V\cap W^{2,p}(D)\hookrightarrow W \hookrightarrow_c  W^{1,q}(D)$ for any $q<+\infty$, see \textit{e.g.} \cite[Thm. 1.20]{Roubivcek2005nonlinear}. We introduce the set
$$ \overline{B}_K=\{f\in L^\infty(0,T;W); \partial_t f\in  L^\infty(0,T;L^2(D)):  \Vert f\Vert_{L^\infty(0,T;W)}+\Vert \partial_t f\Vert_{L^\infty(0,T;L^2(D))} \leq K \}, \quad K>0.$$
Note that 
\begin{align}\label{strongcv-eps}
\overline{B}_K  \text{ is relatively compact in } C([0,T],W^{1,q}(D)) \text{ thanks to \cite[Cor. 4 ]{Simoncompact}}.
\end{align} Consequently, we get the existence of subsequence of $(\varphi_\varepsilon)_\varepsilon$ denoted by the same way  and $\varphi \in L^\infty(0,T;V\cap W^{2,p}(D))$ such that the following convergences, as $\varepsilon \to 0,$ hold
\begin{align}
    \varphi_\varepsilon &\buildrel\ast\over\rightharpoonup \varphi \text{ in } L^\infty(0,T;V\cap W^{2,p}(D)), \label{cv1-eps}\\
    \partial_t \varphi_\varepsilon &\buildrel\ast\over\rightharpoonup \partial_t\varphi \text{ in } L^\infty(0,T;L^2(D)), \label{cv2-eps}\\
     \varphi_\varepsilon & \to \varphi \text{ in } C([0,T];W^{1,q}(D)),\quad  \forall q<+\infty, \label{cv3-eps}.
\end{align}
Indeed, \eqref{cv1-eps} and \eqref{cv2-eps} are consequence of Banach–Alaoglu theorem, \eqref{cv3-eps} is consequence of \eqref{strongcv-eps}. On the other hand, recall that \autoref{lemma-approximation} ensures
\begin{align}
    g_\varepsilon \to g \text{ in }  L^p(D) \label{cv4-eps}.
\end{align}
Moreover, \eqref{approx-initial-data} and \eqref{cv3-eps}
ensures that 
\begin{align}
\varphi_\varepsilon(0)=\varphi_\varepsilon^0 \text{ converges  in } W^{1,q}(D) \text{ to } \varphi(0)=\varphi_0 \in V\cap W^{2,p}(D).     
\end{align}
Furthermore,  \eqref{compactness-est-1_epsilon} ensures that $\varphi_\varepsilon(t) \rightharpoonup \varphi(t)$ in  $V\cap W^{2,p}(D)$ for any $t\in [0,T]$ by an argument similar to that used to obtain \eqref{cv4}.\\

Let $\psi \in W$ and recall that from \eqref{DEF-form-epsilon} we have
\begin{align}\label{DEF-form-epsilon--}
 &       \langle \partial_t(\varphi_\epsilon-\Delta\varphi_\epsilon),\psi\rangle_{W^\prime,W}-\int_D (g_\epsilon-\Delta \varphi_\epsilon)\nabla^\perp\varphi_\epsilon\cdot \nabla \psi dx=\varepsilon\int_D(\varphi_\epsilon-\Delta\varphi_\epsilon+g_\epsilon)\Delta \psi.
    \end{align}
    Thanks to \autoref{lemma-approximation} and 
  \eqref{compactness-est-1_epsilon}, we get 
  \begin{align*}
      \lim_{\varepsilon \to 0} \varepsilon\int_D(\varphi_\epsilon-\Delta\varphi_\epsilon+g_\epsilon)\Delta \psi=0.
  \end{align*}
  Therefore, it is possible to use \eqref{cv1-eps}-\eqref{cv4-eps} and pass to the limit as $\varepsilon \to 0$ in \eqref{DEF-form-epsilon--} to get
\begin{align}\label{DEF-weak-solu}
 &       \langle \partial_t(\varphi-\Delta\varphi),\psi\rangle_{W^\prime,W}-\int_D (g-\Delta \varphi)\nabla^\perp\varphi\cdot \nabla \psi dx=0, \quad \forall \psi \in W.
    \end{align}
  In conclusion, we proved  the existence of $\varphi$ satisfying
    \begin{itemize}
        \item $\varphi \in  C([0,T];W^{1,q}(D)) \text{ for any } q \text{ finite and }  \varphi \in L^\infty(0,T;V\cap W^{2,p}(D)). $
        \item $\varphi \in C([0,T];V\cap W^{2,p}(D)-w)$, see \textit{e.g.}  \cite[Lem 1.4 p. 263]{Temam2024navier}.
   \item $\varphi$ satisfies $\varphi(0)=\varphi_0$ and  solves \eqref{HM-eqn} in the following sense   
      \begin{align}\label{limlit1-V}
 &       \langle \partial_t(\varphi-\Delta\varphi),\psi\rangle_{W^\prime,W}-\int_D(g-\Delta \varphi)\nabla^\perp\varphi\cdot \nabla \psi dx=0, \quad \forall \psi \in W.
    \end{align}  
     \end{itemize}
     We end this section by the following estimate, which play an important role for a uniqueness result. From \eqref{inequality-uniq-Yodu}, we have
     \begin{align*}
\text{ for any } t\in [0,T]: \quad
\int_D \vert\varphi_\epsilon(t)-\Delta\varphi_\epsilon(t)+g_\epsilon\vert^p dx \leq \int_D \vert \varphi_\epsilon^0-\Delta\varphi_\epsilon^0+g_\epsilon\vert^p dx 
 \end{align*}
 By using the lower semicontinuity of the  weak convergence in $V\cap W^{2,p}(D)$, \autoref{lemma-approximation} and  \eqref{approx-initial-data}, we get
     \begin{align}\label{inequality-uniq-Yodu-2}
\text{ for any } t\in [0,T]: \quad
\int_D \vert\varphi(t)-\Delta\varphi(t)+g\vert^p dx \leq \int_D \vert \varphi_0-\Delta\varphi_0+g\vert^p dx 
 \end{align}
 which gives $
     \Vert \varphi(t)-\Delta\varphi(t)\Vert_p \leq  4(\Vert \varphi_0-\Delta\varphi_0\Vert_p+2\Vert g\Vert_p).$
 So that there exists $K_*>0$ independent of $p$ such that
   \begin{align*}
   \sup_{t\in[0,T]}   \Vert \varphi(t)-\Delta\varphi(t)\Vert_p \leq  K_*(\Vert \varphi_0-\Delta\varphi_0\Vert_p+\Vert g\Vert_p).
 \end{align*}

 To study the uniqueness, we need to perform some integration by parts in time. This is the aim of  the next lemma.   \begin{lemma}\label{lem-IPP-time}
 Let $\varphi$ be the solution to \eqref{HM-eqn} given in \autoref{TH-existence-HM-0}. The following equality holds
    \begin{align}\label{IPP-equality}
       2    \langle \partial_t(\varphi-\Delta\varphi),\varphi\rangle_{W^\prime,W}=\dfrac{d}{dt}\Vert \varphi\Vert_V^2 \text{ in the sense of distribution on } ]0,T[.
         \end{align}
     \end{lemma}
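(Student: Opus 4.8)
The plan is to reduce everything to the formal chain of identities
\[
\frac{d}{dt}\Vert\varphi\Vert_V^2=\frac{d}{dt}\big(\varphi-\Delta\varphi,\varphi\big)=2\big(\partial_t(\varphi-\Delta\varphi),\varphi\big),
\]
where the first equality uses the relation $(u,v)_V=((I-\Delta)u,v)$ valid for $u,v\in W$, and the second uses the symmetry of the $V$-inner product. The obstruction is purely one of regularity: by \autoref{TH-existence-HM-0} we only control $\varphi\in L^\infty(0,T;W)$, $\partial_t\varphi\in L^\infty(0,T;L^2(D))$ and $\partial_t(\varphi-\Delta\varphi)\in L^\infty(0,T;W')$, so neither $\partial_t\varphi$ nor $\partial_t(\varphi-\Delta\varphi)$ is regular enough in space to perform the manipulation pointwise in $t$. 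I would therefore mollify in time and pass to the limit in the sense of distributions on $]0,T[$.

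Concretely, fix a compact interval $J\Subset\,]0,T[$, a standard nonnegative mollifier $(\rho_n)_n$ in the time variable, and set $\varphi_n:=\varphi*_t\rho_n$, which for $n$ large is defined and smooth in $t$ on $J$ with values in $W$. The key observation is that $\partial_t\varphi_n(t)=\int\varphi(s)\,\rho_n'(t-s)\,ds$ is again a Bochner integral of a $W$-valued map, hence $\partial_t\varphi_n(t)\in W$ for every $t$; the differentiation falls on the (scalar) mollifier, not on $\varphi$. This is precisely what is needed to legitimately invoke the inner-product identity: since $\varphi_n$ is $C^1$ in time with values in $W$, the classical chain rule together with $(u,v)_V=((I-\Delta)u,v)$ gives
\[
\tfrac12\frac{d}{dt}\Vert\varphi_n\Vert_V^2=(\partial_t\varphi_n,\varphi_n)_V=\big((I-\Delta)\partial_t\varphi_n,\varphi_n\big)=\big(\partial_t(\varphi_n-\Delta\varphi_n),\varphi_n\big),
\]
where the last pairing is the genuine $L^2(D)$-inner product because $\partial_t(\varphi_n-\Delta\varphi_n)\in W\subset L^2(D)$.

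It remains to pass to the limit $n\to+\infty$. Since $\varphi\in L^2_{loc}(0,T;W)$, mollification yields $\varphi_n\to\varphi$ strongly in $L^2(J;W)$, hence $\Vert\varphi_n\Vert_V^2\to\Vert\varphi\Vert_V^2$ in $L^1(J)$, so the left-hand sides converge in $\mathcal{D}'(]0,T[)$. For the right-hand side, $\Delta$ commutes with time mollification, so $\partial_t(\varphi_n-\Delta\varphi_n)=\big[\partial_t(\varphi-\Delta\varphi)\big]*_t\rho_n\to\partial_t(\varphi-\Delta\varphi)$ strongly in $L^2(J;W')$. Testing against $\zeta\in C_c^\infty(]0,T[)$ (supported in $J$) and splitting the duality product into a term bounded by $\Vert\partial_t(\varphi_n-\Delta\varphi_n)-\partial_t(\varphi-\Delta\varphi)\Vert_{L^2(J;W')}\Vert\varphi_n\Vert_{L^2(J;W)}$ and a term converging by the strong convergence $\varphi_n\to\varphi$ in $L^2(J;W)$, one obtains
\[
\int_0^T\zeta\,\big(\partial_t(\varphi_n-\Delta\varphi_n),\varphi_n\big)\,dt\longrightarrow\int_0^T\zeta\,\langle\partial_t(\varphi-\Delta\varphi),\varphi\rangle_{W',W}\,dt.
\]
Combining the two limits yields \eqref{IPP-equality}.

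The only genuinely delicate point is the regularity bookkeeping in the regularized identity, namely that $\partial_t\varphi_n\in W$ so that $(I-\Delta)$ may be moved onto it, together with the strong $L^2(J;W')$ convergence of the mollified time-derivative; everything else is routine. Alternatively, one can bypass the mollification and apply the abstract integration-by-parts lemma \cite[Lem. 5]{Tahraoui2019tools} to the Gelfand triple $W\hookrightarrow V\hookrightarrow W^{\ast}$ built on the pivot $V$ (with inner product $(\cdot,\cdot)_V$), after identifying $W^{\ast}$ with $W'$ through the isomorphism $I-\Delta$; under this identification $\partial_t\varphi\in L^\infty(0,T;W^{\ast})$ corresponds exactly to $\partial_t(\varphi-\Delta\varphi)\in L^\infty(0,T;W')$, and the lemma delivers \eqref{IPP-equality} at once.
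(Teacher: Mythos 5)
Your proposal is correct and follows essentially the same route as the paper: regularize in time by convolution so that the identity $2(\partial_t(\varphi_l-\Delta\varphi_l),\varphi_l)=\frac{d}{dt}\Vert\varphi_l\Vert_V^2$ holds classically for the smooth approximants, then pass to the limit in $\mathcal{D}'(]0,T[)$ using the strong convergences $\varphi_l\to\varphi$ in $L^2_{loc}(]0,T[;W)$ and $\partial_t(\varphi_l-\Delta\varphi_l)\to\partial_t(\varphi-\Delta\varphi)$ in $L^2_{loc}(]0,T[;W')$. The only cosmetic difference is that the paper extends $\varphi$ by zero to $\mathbb{R}$ before mollifying whereas you work on compact subintervals $J\Subset\,]0,T[$, which is equally valid for a distributional identity on $]0,T[$.
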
    
   \begin{proof}
       Although the proof is based on standard argument (see \textit{e.g.} \cite{Temam2024navier}), we include it here for the convenience of the reader.
       Denote by $\widetilde{\varphi}$ the extension to the real line $\mathbb{R}$ of $\varphi$ by $0$ outside the interval $[0,T].$ Next, consider the regularization by convolution, with some given kernel, of $\widetilde{\varphi}$ to obtain a sequence of function $(\varphi_l)_l$ such that: $  \text{for any } l \in \mathbb{N}, \quad \varphi_l \in C^\infty([0,T];W)$ and 
       \begin{align}\label{cv-convolution}
          \varphi_l  \to \varphi \text{ in }  L^2_{loc}(]0,T[;W) \text{ and } \partial_t(\varphi_l-\Delta \varphi_l) \to \partial_t(\varphi-\Delta \varphi) \text{ in }  L^2_{loc}(]0,T[;W^\prime).
       \end{align}
       On the other hand, by using \eqref{Lions-Guel} we have
       \begin{align}
      \label{cv-conv-1}     2 \langle \partial_t(\varphi_l-\Delta\varphi_l),\varphi_l\rangle_{W^\prime,W}= 2\langle \partial_t(\varphi_l-\Delta\varphi_l),\varphi_l\rangle_{V^\prime,V}&=2(\partial_t(\varphi_l-\Delta\varphi_l),\varphi_l)\\
     \text{(after integration by parts with respect to x)}       &=\dfrac{d}{dt}\Vert \varphi_l\Vert_V^2, \quad \forall l \in \mathbb{N}.\notag
       \end{align}
       It follows from \eqref{cv-convolution} that 
       \begin{align*}
        \lim_{l\to +\infty}   \Vert \varphi_l\Vert_V^2 =\Vert \varphi\Vert_V^2  \text{ and } \lim_{l\to +\infty} \langle \partial_t(\varphi_l-\Delta\varphi_l),\varphi_l\rangle_{W^\prime,W}=\langle \partial_t(\varphi-\Delta\varphi),\varphi\rangle_{W^\prime,W} \text{ in } L^1_{loc}(]0,T[).
       \end{align*}
       The last convergence allows to pass to the limit (in  the sense of distribution), as $l\to +\infty$ in \eqref{cv-conv-1}  to conclude.
   \end{proof}
\subsection{Proof of \autoref{THm-uniqu-YuD}  (the uniqueness)}
We prove  the uniqueness in $\bigcap_{1\leq p<+\infty}\big(W^{2,p}(D)\big)$. It is worth mentioning that the proof is followed by an argument similar to \cite{Yudovich1995uniqueness}. 
     Let $\varphi_1$ and $\varphi_2$ be two solutions to \eqref{Def-sol_HM} associated to the same data $(\log(n_0),\varphi_0)$ and denote by $\varphi=\varphi_1-\varphi_2$. Then,  for any $\psi \in W$ we have 
    \begin{align}\label{diff-2-sols-HM}
        &       \langle \partial_t(\varphi-\Delta\varphi),\psi\rangle_{W^\prime,W}-\int_D\log (n_0)\nabla^\perp\varphi\cdot \nabla \psi dx\\& \quad+\int_D\Delta \varphi\nabla^\perp\varphi_1\cdot \nabla \psi dx+\int_D\Delta \varphi_2\nabla^\perp\varphi\cdot \nabla \psi dx=0.\notag
    \end{align} 
  Set $\psi=\varphi$ in \eqref{diff-2-sols-HM} and use that $\nabla^\perp\varphi\cdot \nabla \varphi=0$, we get
 \begin{align*}
     \langle \partial_t(\varphi-\Delta\varphi),\varphi\rangle_{W^\prime,W}+\int_D\Delta \varphi\nabla^\perp\varphi_1\cdot \nabla \varphi dx=0.
 \end{align*}
 Thanks to \eqref{IPP-equality}, we write
$   \dfrac{d}{dt}\Vert \varphi\Vert_V^2+2\int_D\Delta \varphi\nabla^\perp\varphi_1\cdot \nabla \varphi dx=0.
 $ An integration by parts ensures
\begin{align*}
    &\int_D\Delta \varphi\nabla^\perp\varphi_1\cdot \nabla \varphi dx=\sum_{i=1,2}\int_D\partial_i^2 \varphi\nabla^\perp\varphi_1\cdot \nabla \varphi dx\\&=-\sum_{i=1,2}\int_D\partial_i \varphi\nabla^\perp\varphi_1\cdot \nabla \partial_i\varphi dx-\sum_{i=1,2}\int_D\partial_i \varphi\nabla^\perp\partial_i\varphi_1\cdot \nabla \varphi dx+\int_{\partial D}  \nabla\varphi\cdot \eta\nabla^\perp\varphi_1\cdot \nabla \varphi d\sigma,\\
    &=-\dfrac{1}{2}\sum_{i=1,2}\int_D\nabla^\perp\varphi_1\cdot \nabla (\partial_i\varphi)^2 dx-\sum_{i=1,2}\int_D\partial_i \varphi\nabla^\perp\partial_i\varphi_1\cdot \nabla \varphi dx+\int_{\partial D}  \nabla\varphi\cdot \eta\nabla^\perp\varphi_1\cdot \nabla \varphi d\sigma\\
   & =-\dfrac{1}{2}\sum_{i=1,2}\int_{\partial D}\nabla^\perp\varphi_1\cdot \eta (\partial_i\varphi)^2 d\sigma-\sum_{i=1,2}\int_D\partial_i \varphi\nabla^\perp\partial_i\varphi_1\cdot \nabla \varphi dx+\int_{\partial D}  \nabla\varphi\cdot \eta\nabla^\perp\varphi_1\cdot \nabla \varphi d\sigma.
\end{align*}
Note that the boundary terms equals $0$. Indeed,  since $(\varphi_i)_{i=1,2}=0$ on $\partial D$, we have that $(\nabla\varphi_i)_{i=1,2}$ is 
normal to $\partial D$. Since $D\subseteq \mathbb{R}^2,$  the tangent vector  on the boundary  is given by $\tau=(-\eta_2,\eta_1).$  Thus, we have
\begin{align*}
 \text{ for } i=1,2: \quad  (\nabla^\perp \varphi_i\cdot \eta)\tau= (\nabla \varphi_i\cdot \tau)\tau:= \nabla \varphi_i-(\nabla \varphi_i\cdot \eta) \eta=0,
\end{align*}
which implies  that $\int_{\partial D}\nabla^\perp\varphi_1\cdot \eta [(\partial_1\varphi)^2+ (\partial_2\varphi)^2]d\sigma=0.$ For the other boundary term, note that
\begin{align*}
    \int_{\partial D}  \nabla\varphi\cdot \eta\nabla^\perp\varphi_1\cdot \nabla \varphi d\sigma&=\int_{\partial D}  \nabla\varphi\cdot \eta\nabla^\perp\varphi_1\cdot [(\nabla \varphi\cdot \eta)\eta+ \underbrace{(\nabla \varphi\cdot \tau)}_{=0}\tau ]d\sigma\\
    &=\int_{\partial D}  \nabla\varphi\cdot \eta\underbrace{\nabla^\perp\varphi_1\cdot \eta }_{=0}(\nabla \varphi\cdot \eta) d\sigma=0.
\end{align*}
Therefore, we proved 
\begin{align*}
     \dfrac{d}{dt}\Vert \varphi\Vert_V^2=2\sum_{i=1,2}\int_D\partial_i \varphi\nabla^\perp\partial_i\varphi_1\cdot \nabla \varphi dx \leq 2\int_D\vert \mathcal{D}^2\varphi_1\vert  \vert \nabla \varphi \vert^2dx,
 \end{align*}
 where $\mathcal{D}^2\varphi_1$ denotes the Hessian matrix of $\varphi_1.$ Recall that $W^{2,p}(D) \hookrightarrow W^{1,\infty}(D) $ for $p>2.$ Hence for any $\varepsilon\in ]0,1/2[$, we get (after applying Hölder inequality)
\begin{align*}
     \dfrac{d}{dt}\Vert \varphi\Vert_V^2 &\leq 2 \Vert  \nabla \varphi\Vert_\infty^\varepsilon\int_D\vert \mathcal{D}^2\varphi_1\vert  \vert \nabla \varphi \vert^{2-\varepsilon}dx \leq 2 \Vert  \nabla \varphi\Vert_\infty^\varepsilon\Vert \mathcal{D}^2\varphi_1\Vert_{L^{2/\varepsilon}}  \big(\Vert \nabla \varphi \Vert_2^2\big)^{1-\varepsilon/2} \\
     &\leq  2 \Vert  \nabla \varphi\Vert_\infty^\varepsilon\Vert \mathcal{D}^2\varphi_1\Vert_{L^{2/\varepsilon}}  \big(\Vert \varphi \Vert_V^2\big)^{1-\varepsilon/2} \leq \mathbf{K} \Vert \mathcal{D}^2\varphi_1\Vert_{L^{2/\varepsilon}}  \big(\Vert \varphi \Vert_V^2\big)^{1-\varepsilon/2}\\
     &\leq  \mathbf{K} \Vert \mathcal{D}^2\varphi_1\Vert_{L^{2/\varepsilon}}  \big(\Vert \varphi \Vert_V^2\big)^{1-\varepsilon/2},
 \end{align*}
 where $2 \Vert  \nabla \varphi\Vert_\infty^\varepsilon=\mathbf{K}>0.$ Now, we recall the   following Calderon-Zygmund’s inequality
 (see \textit{e.g.} \cite[Thm. 2.1]{Yudovich1963}), there exists $C_D>0$ depends only on $D$ such that 
\begin{align}\label{ineq-growth-p}
    \Vert \varphi\Vert_{W^{2,p}} \leq C_Dp\Vert\varphi-\Delta \varphi \Vert_p \text{ for any } 2\leq p<+\infty.
\end{align}
 By using  \eqref{inequ-p-uniq-*} and   \eqref{ineq-growth-p}, we obtain 
 \begin{align}
     \notag  \dfrac{d}{dt}\Vert \varphi\Vert_V^2 &\leq   \mathbf{K}K_*\dfrac{2}{\varepsilon} C_DK_*(\Vert \varphi_0-\Delta\varphi_0\Vert_{2/\varepsilon}+\Vert \log (n_0)\Vert_{2/\varepsilon})  \big(\Vert \varphi \Vert_V^2\big)^{1-\varepsilon/2}\\
\label{est-uniq-inequ-infty}      & \leq  \mathbf{M} \dfrac{2}{\varepsilon} (\Vert \varphi_0-\Delta\varphi_0\Vert_{2/\varepsilon}+\Vert \log (n_0)\Vert_{2/\varepsilon})  \big(\Vert \varphi \Vert_V^2\big)^{1-\varepsilon/2}\\
 \text{ (by using \eqref{assumption-uniquness}) }     & \notag\leq  2\mathbf{AM} \dfrac{2}{\varepsilon}\theta(\dfrac{2}{\varepsilon} )  \big(\Vert \varphi \Vert_V^2\big)^{1-\varepsilon/2}:= \lambda\dfrac{2}{\varepsilon}\theta(\dfrac{2}{\varepsilon} )  \big(\Vert \varphi \Vert_V^2\big)^{1-\varepsilon/2}.
 \end{align}
 Summarizing, there exists $\lambda>0$ such that
$ \dfrac{d}{dt}\Vert \varphi\Vert_V^2 \leq \lambda \Vert \varphi\Vert_V^2 \dfrac{2}{\varepsilon}\theta(\dfrac{2}{\varepsilon} )  \big(\Vert \varphi \Vert_V^2\big)^{-\varepsilon/2}
$ and we finally get
  \begin{align}\label{key-esti-1-uniq}
      \dfrac{d}{dt}\Vert \varphi\Vert_V^2 \leq \lambda \Vert \varphi\Vert_V^2\Phi_\theta(\dfrac{1}{\Vert \varphi\Vert_V^2}).
 \end{align}
 By using \eqref{Osgood-condition} and  \eqref{key-esti-1-uniq},  Osgood’s  theorem (see \textit{e.g.} \cite{Hartman2002ordinary}) ensures $\Vert \varphi (t)\Vert_V \equiv 0$ for any $t\in [0,T]$.

      \section{Proof of \autoref{TH-existence-HM-bounded-section0} (bounded density)}\label{Section-bounded-density-result}
      We prove  the  well-posedness of Hasegawa-Mima equation \eqref{HM-eqn} with bounded density.
   Denote  $g:=\log (n_0) \in L^\infty(D) \hookrightarrow L^p(D)$ for any $1\leq p< +\infty$.
         Thanks to \autoref{lemma-approximation}, there exists a sequence  $(g_\varepsilon)_\varepsilon \subseteq W$ such that $\displaystyle\lim_{\varepsilon \to 0 }g_\varepsilon=g$ in $L^p(D)$ and such that  \eqref{properties-appro}  and \eqref{Linfty-bound-appro} hold.  Set
         \begin{align}\label{Linfty-bound}
             K= \Vert \varphi_0-\Delta \varphi_0\Vert_\infty+\Vert \log(n_0)\Vert_\infty,
         \end{align}
         and note that (thanks to \eqref{Linfty-bound-appro})
         \begin{align}\label{bound-initial-data+densi}
     \text{for any } \varepsilon >0: \quad    -K\leq     \varphi_0-\Delta \varphi_0+g_\epsilon \leq K \text{  a.e. in } D.
         \end{align}

Let $\epsilon>0$, by using \autoref{THM1}, there exists  a unique   $ \varphi_\epsilon \in C([0,T];W) \cap L^2(0,T;Y)  $ such that for any $\psi \in V$ we have
     \begin{align}\label{DEF-form-epsilon-bound}
 &       \langle \partial_t(\varphi_\epsilon-\Delta\varphi_\epsilon),\psi\rangle_{V^\prime,V}+\int_D[\varepsilon\nabla(\varphi_\epsilon-\Delta\varphi_\epsilon+g_\epsilon)\cdot \nabla \psi +\nabla(g_\epsilon+\varphi_\varepsilon-\Delta \varphi_\epsilon)\cdot\nabla^\perp\varphi_\epsilon \psi]dx=0.
    \end{align}
Let $\delta>0$ and consider the  approximation ( see \textit{e.g.}  \cite[p. 152]{Pardouxthesis})
\begin{eqnarray}\label{apprx-negative-part}
F_\delta(r)= \left\{ \begin{array}{l}
r^2-\frac{\delta^2}{6} \quad\quad   \text{ if } \hspace*{0.2cm} r\leq -\delta, \\
-\frac{r^4}{2\delta^2}-\frac{4r^3}{3\delta}\quad \text{ if } \hspace*{0.2cm}  -\delta\leq r \leq 0, \\
0 \quad\quad\quad\quad \quad\quad \text{if } \hspace*{0.2cm}  r\geq 0.
\end{array}
\right.
\end{eqnarray} 
 Note that $ (F_\delta)_\delta $   satisfies 
$\displaystyle\lim_{\delta\to 0} F_\delta(\lambda)=F (\lambda)=(\lambda^-)^2$ point-wise. Moreover, $F_\delta(\cdot)\in \mathcal{C}^2(\mathbb{R})$, and satisfies:
\begin{eqnarray*}
 \left\{ \begin{array}{l}
\vert F_\delta(r)\vert \leq r^2, \qquad
\vert F_\delta^\prime(r)\vert \leq 2r \quad and \quad \forall r \in \mathbb{R}, F_\delta^\prime(r) \leq 0,\\
\vert F_\delta^{\prime\prime}(r)\vert \leq \frac{8}{3} \quad and \quad \forall r \in \mathbb{R},F_\delta^{\prime\prime}(r) \geq 0.
\end{array}
\right.
\end{eqnarray*} 

Note that $\psi=F_\delta^\prime\big(K-(\varphi_\epsilon-\Delta\varphi_\epsilon+g_\epsilon)\big) \in V$ and by using \eqref{DEF-form-epsilon-bound}, we get
   \begin{align*}
 &       \langle \partial_t(K-\varphi_\epsilon+\Delta\varphi_\epsilon),F_\delta^\prime\big(K-(\varphi_\epsilon-\Delta\varphi_\epsilon+g_\epsilon)\big)\rangle_{V^\prime,V}\\&\quad+\int_D\varepsilon\overbrace {\vert\nabla(K-\varphi_\epsilon+\Delta\varphi_\epsilon-g_\epsilon)\vert^2 F_\delta^{\prime\prime}\big(K-(\varphi_\epsilon-\Delta\varphi_\epsilon+g_\epsilon)\big)}^{\geq 0}dx\\
 &+\int_D\nabla(K-\varphi_\epsilon+\Delta\varphi_\epsilon-g_\epsilon)\cdot\nabla^\perp\varphi_\epsilon  F_\delta^\prime\big(K-(\varphi_\epsilon-\Delta\varphi_\epsilon+g_\epsilon)\big)dx=0.
 \end{align*}
 By using an integration by part formula in time (see \textit{e.g.} \cite[Lem. 5]{Tahraoui2019tools}), we get
 \begin{align*}
   \langle \partial_t(K-\varphi_\epsilon+\Delta\varphi_\epsilon),F_\delta^\prime\big(K-(\varphi_\epsilon-\Delta\varphi_\epsilon+g_\epsilon)\big)\rangle_{V^\prime,V}=\dfrac{d}{dt}\int_D F_\delta\big(K-(\varphi_\epsilon-\Delta\varphi_\epsilon+g_\epsilon)\big) dx.
 \end{align*}
Next, note that $F_\delta\big(K-(\varphi_\epsilon-\Delta\varphi_\epsilon+g_\epsilon)\big)=0$ on $\partial D$ since $g_\epsilon+\varphi_\varepsilon-\Delta \varphi_\epsilon=0$ on $\partial D$ and
 \begin{align*} 
&\int_D\nabla(K-\varphi_\epsilon+\Delta\varphi_\epsilon-g_\epsilon)\cdot\nabla^\perp\varphi_\epsilon  F_\delta^\prime\big(K-(\varphi_\epsilon-\Delta\varphi_\epsilon+g_\epsilon)\big)dx\\
&=\int_D\nabla^\perp\varphi_\epsilon   \cdot \nabla F_\delta\big(K-(\varphi_\epsilon-\Delta\varphi_\epsilon+g_\epsilon)\big)dx\\
   & =-     \int_{\partial D}\nabla^\perp\varphi_\epsilon \cdot  \eta F_\delta\big(K-(\varphi_\epsilon-\Delta\varphi_\epsilon+g_\epsilon)\big)d\sigma=0.
 \end{align*}
Consequently, we get   \begin{align*}
\int_D F_\delta\big(K-(\varphi_\epsilon(t)-\Delta\varphi_\epsilon(t)+g_\epsilon)\big) dx&\leq  \int_D F_\delta\big(K-(\varphi_0-\Delta\varphi_0+g_\epsilon)\big) dx.
 \end{align*}
 Now, we use  dominated convergence theorem, as $\delta\to 0$ to deduce for any $t\in [0,T]$
 \begin{align*}
\int_D [\big(K-(\varphi_\epsilon(t)-\Delta\varphi_\epsilon(t)+g_\epsilon)\big)^-]^2 dx&\leq  \int_D [\big(K-(\varphi_0-\Delta\varphi_0+g_\epsilon)\big)^-]^2 dx=0,
 \end{align*}
thanks to \eqref{bound-initial-data+densi}. Thus,  a.e. $x\in D$, for any $t\in [0,T]$
\begin{align}\label{uper-bound-1}
    \varphi_\epsilon(t)-\Delta\varphi_\epsilon(t)+g_\epsilon\leq K.
\end{align}
By  repeating the same arguments with $\psi=F_\delta^\prime\big(K+(\varphi_\epsilon-\Delta\varphi_\epsilon+g_\epsilon)\big) $ and combining with \eqref{uper-bound-1}, we  deduce
\begin{align}\label{lowe-uper-bound}
  \text{a.e. } x\in D, \text{ for any }t\in [0,T]: \quad -K\leq   \varphi_\epsilon(t)-\Delta\varphi_\epsilon(t)+g_\epsilon\leq K.
\end{align}
Since $\varphi_\varepsilon-\Delta \varphi_\varepsilon$ is measurable function, the last inequalities \eqref{lowe-uper-bound} ensure
\begin{align}\label{lowe-uper-bound-final}
  \Vert \varphi_\varepsilon-\Delta \varphi_\varepsilon\Vert_\infty:=  \sup_{(t,x)\in[0,T]\times D}   \vert \varphi_\epsilon(t,x)-\Delta\varphi_\epsilon(t,x)\vert \leq K+ \Vert \log(n_0)\Vert_\infty < 2K.
\end{align}
Similarly to I. and II.  in \autoref{SubSection-proof1}, we can prove
\begin{align}
 (\varphi_\varepsilon)_\varepsilon&\text{ is bounded in }    C([0,T],V\cap W^{2,p}(D)),\quad (\partial_t\varphi_\varepsilon)_\varepsilon \text{ is bounded in }
 L^\infty(0,T;L^2(D)),\label{compactness-est-1_epsilon-MOD}\end{align}
 for any $1<p<+\infty.$
  By using  \eqref{lowe-uper-bound-final} and \eqref{compactness-est-1_epsilon-MOD}, we argues as  III. in  \autoref{SubSection-proof1} to get the existence of subsequence of $(\varphi_\varepsilon)_\varepsilon$ denoted by the same way  and $\varphi \in L^\infty(0,T;V\cap W^{2,p}(D))$ such that the following convergences, as $\varepsilon \to 0,$ hold
\begin{align}
    \varphi_\varepsilon &\buildrel\ast\over\rightharpoonup \varphi \text{ in } L^\infty(0,T;V\cap W^{2,p}(D)),\quad    1<p<+\infty\label{cv1-eps-mod}\\
    \partial_t \varphi_\varepsilon &\buildrel\ast\over\rightharpoonup \partial_t\varphi \text{ in } L^2(0,T;L^2(D)), \label{cv2-eps-mod}\\
     \varphi_\varepsilon & \to \varphi \text{ in } C([0,T];W^{1,q}(D)),\quad  \forall q<+\infty, \label{cv3-eps-mod}\\
     \varphi_\varepsilon-\Delta \varphi_\varepsilon  &\buildrel\ast\over\rightharpoonup   \varphi-\Delta \varphi  \text{ in } L^\infty(D\times [0,T]) \label{cv4-infty}.
\end{align}
Indeed, \eqref{cv1-eps-mod}-\eqref{cv3-eps-mod} are similar to \eqref{cv1-eps}-\eqref{cv3-eps}. By  Banach–Alaoglu theorem there exists $\xi \in L^\infty(D\times [0,T])$ such that (for  subsequence denoted by the same way) $\varphi_\varepsilon-\Delta \varphi_\varepsilon  \buildrel\ast\over\rightharpoonup   \xi  \text{ in } L^\infty(D\times [0,T])$ and by diagonal argument, we get $\xi= \varphi-\Delta \varphi$. Finally, we pass to the limit in \eqref{DEF-form-epsilon-bound}, as  III. in \autoref{SubSection-proof1}, to conclude the existence proof.
\subsubsection*{Uniqueness}
     Let $\varphi_1$ and $\varphi_2$ be two solution to \eqref{Def-sol_HM-2} associated to the same data $(\log(n_0),\varphi_0)$ and denote by $\varphi=\varphi_1-\varphi_2$. Then, from \eqref{est-uniq-inequ-infty}  there exists $\mathbf{M}>0$ such that
\begin{align}
     \notag  \dfrac{d}{dt}\Vert \varphi\Vert_V^2 
      & \leq  \mathbf{M} \dfrac{2}{\varepsilon} (\Vert \varphi_0-\Delta\varphi_0\Vert_{2/\varepsilon}+\Vert \log (n_0)\Vert_{2/\varepsilon})  \big(\Vert \varphi \Vert_V^2\big)^{1-\varepsilon/2}
 \end{align}
  for any $\varepsilon\in ]0,1/2[$. Since $L^\infty(D) \hookrightarrow L^{2/\varepsilon}(D)$, the latter inequality ensures the existence of $\mathbf{B}>0$ (depends on the last embedding) such that
  \begin{align}
     \notag  \dfrac{d}{dt}\Vert \varphi\Vert_V^2 
      & \leq  \mathbf{MB} \dfrac{2}{\varepsilon} (\Vert \varphi_0-\Delta\varphi_0\Vert_{\infty}+\Vert \log (n_0)\Vert_{\infty})  \big(\Vert \varphi \Vert_V^2\big)^{1-\varepsilon/2}\leq   K\mathbf{MB} \dfrac{2}{\varepsilon}   \big(\Vert \varphi \Vert_V^2\big)^{1-\varepsilon/2},
 \end{align}
 which ensures (after integration with respect to $t$)
 \begin{align}
     \Vert \varphi (t)\Vert_V^2  \leq (K\mathbf{MB}t)^{\frac{2}{\varepsilon}} \to 0 \text{ as } \varepsilon \to 0,
 \end{align}
 provided that $K\mathbf{MB}t <1$ (\text{e.g.} $t\leq \frac{1}{3 K\mathbf{MB}}$). Thus $\Vert \varphi (t)\Vert_V^2=0$ on $[0,\frac{1}{3 K\mathbf{MB}}]$ and the uniqueness holds globally by repeating the argument of subintervals with length $\frac{1}{3 K\mathbf{MB}}.$
 \section{Proof of \autoref{thm-p-less2} ($p$-integrable density, $\frac{4}{3}<p<2$)}\label{Section-p-less-2}
First, we prove the following result, which serves for the proof of \autoref{thm-p-less2}.
  \begin{lemma}\label{lemma-approximation-p-less-2}
Let $1< p <2.$
  For any $h\in L^p(D)$, there exists $(h_\delta)_{\delta>0}  \subseteq  W$ such that   $h_\delta \to h$ in $L^p(D)$ as $\delta \to 0.$ Moreover,  the following holds
   \begin{align}\label{properties-appro-p-less-2}
  \text{ for any } \delta >0 : \quad \Vert  h_\delta\Vert_p^p\leq \Vert h\Vert_p^p .    
    \end{align}
  \end{lemma}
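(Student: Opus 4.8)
The plan is to realise $h_\delta$ through the Dirichlet resolvent $R_\delta:=(I-\delta\Delta)^{-1}$ already used in \autoref{lemma-approximation}, and to transfer the contraction estimate from the range $p\ge2$, where it is available, to the present range $1<p<2$ by duality. For $f\in L^p(D)$ with $1<p<2$ the functional $\psi\mapsto\int_D f\psi\,dx$ is continuous on $H^1_0(D)$ (since $H^1_0(D)\hookrightarrow L^{p'}(D)$ in two dimensions, $p'=p/(p-1)$), so Lax--Milgram produces a unique $R_\delta f\in H^1_0(D)$ with $\int_D R_\delta f\,\psi+\delta\int_D\nabla R_\delta f\cdot\nabla\psi=\int_D f\psi\,dx$ for all $\psi\in H^1_0(D)$. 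With this notation \autoref{lemma-approximation} reads $\|R_\delta f\|_q\le\|f\|_q$ for every $2\le q<+\infty$ and every $f\in L^q(D)$.

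The heart of the matter is the estimate $\|R_\delta f\|_p\le\|f\|_p$ for $1<p<2$. The direct energy method of \autoref{lemma-approximation} is awkward here, because the natural test function $|R_\delta f|^{p-2}R_\delta f$ is singular on the zero set of $R_\delta f$ once $p<2$; this is the only genuine obstruction, and I would circumvent it by duality. Writing $p'=p/(p-1)>2$ and using density of $C^\infty_c(D)$ in $L^{p'}(D)$, we have $\|R_\delta f\|_p=\sup\{\int_D (R_\delta f)\psi\,dx:\ \psi\in C^\infty_c(D),\ \|\psi\|_{p'}\le1\}$. For such $\psi$ one has $\psi\in L^{p'}(D)\subseteq L^2(D)$, hence $R_\delta\psi\in W$ by the standard $H^2$--regularity for $L^2$ data, and the symmetry of the bilinear form gives the self-adjointness identity $\int_D(R_\delta f)\psi\,dx=\int_D f\,(R_\delta\psi)\,dx$; this is legitimate because in two dimensions $W^{2,p}(D)\hookrightarrow H^1(D)$ for every $p>1$, so $R_\delta f\in H^1_0(D)$ is an admissible test function in the weak formulation of $R_\delta\psi$. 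Applying \autoref{lemma-approximation} with exponent $p'\ge2$ gives $\|R_\delta\psi\|_{p'}\le\|\psi\|_{p'}\le1$, whence $\int_D f\,(R_\delta\psi)\,dx\le\|f\|_p\|R_\delta\psi\|_{p'}\le\|f\|_p$. Taking the supremum over $\psi$ yields $\|R_\delta f\|_p\le\|f\|_p$, which is \eqref{properties-appro-p-less-2}.

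To obtain the claimed regularity $h_\delta\in W=H^2(D)\cap H^1_0(D)$ — which the plain resolvent only gives in $W^{2,p}(D)\cap H^1_0(D)$ when $p<2$ — I would feed $R_\delta$ a truncated datum. Setting $T_k(s)=\max(\min(s,k),-k)$, define $h_\delta:=R_\delta\big(T_{1/\delta}(h)\big)$. Since $T_{1/\delta}(h)\in L^\infty(D)\subseteq L^2(D)$, we get $\Delta h_\delta=\delta^{-1}\big(h_\delta-T_{1/\delta}(h)\big)\in L^2(D)$ and hence $h_\delta\in W$ by elliptic regularity on the smooth domain $D$. The bound survives because truncation is $L^p$--nonexpansive: by the previous paragraph $\|h_\delta\|_p\le\|T_{1/\delta}(h)\|_p\le\|h\|_p$, which is exactly \eqref{properties-appro-p-less-2}.

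It remains to check $h_\delta\to h$ in $L^p(D)$. Splitting with the contraction just proved, $\|h_\delta-h\|_p\le\|R_\delta(T_{1/\delta}(h)-h)\|_p+\|R_\delta h-h\|_p\le\|T_{1/\delta}(h)-h\|_p+\|R_\delta h-h\|_p$, and the first term tends to $0$ by dominated convergence. For the second term I would use that $L^2(D)$ is dense in $L^p(D)$ (as $D$ is bounded) together with the uniform bound $\|R_\delta\|_{L^p\to L^p}\le1$: given $\tilde h\in L^2(D)$ one has $R_\delta\tilde h\to\tilde h$ in $L^2(D)\hookrightarrow L^p(D)$ by strong resolvent convergence, so $\limsup_{\delta\to0}\|R_\delta h-h\|_p\le2\|h-\tilde h\|_p$, which can be made arbitrarily small. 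This proves the lemma.
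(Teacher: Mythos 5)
Your proof is correct, and it reaches the contraction estimate \eqref{properties-appro-p-less-2} by a genuinely different route than the paper. The paper attacks the range $1<p<2$ directly: it tests the weak formulation with the regularized weight $\psi=(\vert h_\delta\vert+\gamma)^{p-2}h_\delta$ (which is an admissible $H^1_0$ test function because the weight is a Lipschitz function of $h_\delta$ vanishing at the origin), obtains $\Vert h_\delta\Vert_p^p\le\Vert h\Vert_p^p$ after letting $\gamma\to 0$ by dominated convergence, and then upgrades the resulting weak convergence $h_\delta\rightharpoonup h$ to strong $L^p$ convergence via the $\limsup$ of norms and uniform convexity. You instead exploit the self-adjointness of the Dirichlet resolvent $R_\delta=(I-\delta\Delta)^{-1}$ and the duality $\Vert R_\delta f\Vert_p=\sup\{\int_D fR_\delta\psi\,dx:\Vert\psi\Vert_{p'}\le1\}$ to transfer the contraction already proved in \autoref{lemma-approximation} for the conjugate exponent $p'>2$; the self-adjointness identity is legitimate exactly as you say, since both $R_\delta f$ and $R_\delta\psi$ lie in $H^1_0(D)$ and may be inserted into each other's weak formulations. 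Both proofs use the same truncation $T_{1/\delta}(h)$ of the datum to secure $h_\delta\in W$. What your approach buys is that it avoids any computation with the singular weight $\vert h_\delta\vert^{p-2}$ and reduces the case $1<p<2$ entirely to the already-established case $p\ge2$ (it would even extend to $p=1$ via the $L^\infty$ bound \eqref{Linfty-bound-appro}); what it costs is the need for the duality characterization of the norm and a separate, more hands-on argument for the convergence $R_\delta h\to h$ in $L^p$, which you supply correctly by density of $L^2(D)$ in $L^p(D)$ and the uniform bound $\Vert R_\delta\Vert_{L^p\to L^p}\le1$. One cosmetic remark: the admissibility of $R_\delta f$ as a test function follows directly from its Lax--Milgram construction in $H^1_0(D)$, so the appeal to $W^{2,p}(D)\hookrightarrow H^1(D)$ is unnecessary.
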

\begin{proof}
Let $h\in L^p(D)$ and $\delta>0$, then $T_{1/\delta}(h)\footnote{$T_k(x)=\max(\min(x,k),-k),\quad  x\in \mathbb{R}, \quad k\in \mathbb{R}^+.$}\in L^\infty(D)\hookrightarrow L^2(D)$ and $\displaystyle\lim_{\delta \to 0}T_{1/\delta}(h)=h$  in $L^p(D)$ thanks to  Lebesgue's dominated convergence theorem. By classical arguments (Lax-Milgram theorem and elliptic regularity theory, see  \textit{e.g.} \cite[Thm. 9.25]{BrezisBook}) there exists a unique $h_\delta \in H^2(D)\cap H^1_0(D)=W$ such that 
    \begin{align}\label{form-Lax-Mil-V2}
     \int_D h_\delta\psi dx+\delta\int_D \nabla h_\delta\cdot \nabla \psi dx=\int_D T_{1/\delta}(h)\psi dx ,  \quad \forall \psi \in H^1_0(D).  
    \end{align}
Let $\gamma>0,$    set $\psi=(\vert h_\delta\vert + \gamma)^{p-2}h_\delta$ in \eqref{form-Lax-Mil-V2} and use Young inequality   to get 
    \begin{align*}
          \int_D h_\delta^2(\vert h_\delta\vert + \gamma)^{p-2} dx&+\delta\int_D \vert \nabla h_\delta\vert^2\big( [(p-1)\vert h_\delta\vert+\gamma](\vert h_\delta\vert + \gamma)^{p-3}\big)dx\\&=\int_D T_{1/\delta}(h)h_\delta(\vert h_\delta\vert + \gamma)^{p-2} dx  \leq \dfrac{1}{p} \Vert h\Vert_p^p+\dfrac{p-1}{p}\Vert (\vert h_\delta\vert + \gamma)\Vert_p^p.
    \end{align*}
    Lebesgue's dominated convergence theorem ensures, as $\gamma \to 0$
    \begin{align}\label{est-delta-pless2}
    \text{ for any } \delta >0: \quad    \Vert h_\delta\Vert_p^p\leq  \Vert h\Vert_p^p.    
    \end{align}
           It follows from \eqref{form-Lax-Mil-V2},  \eqref{est-delta-pless2} and the density of $H^2(D)\cap H^1_0(D)$ in $L^{p/p-1}(D)$  that $h_\delta \rightharpoonup h$ in $L^p(D)$ as $\delta \to 0.$ Now, \eqref{est-delta-pless2} ensures that $\displaystyle\limsup_{\delta \to 0} \Vert  h_\delta\Vert_p^p\leq \Vert h\Vert_p^p$ and thus $\Vert h_\delta - h\Vert_p \to 0$ as $\delta \to 0$ by uniform convexity argument. 
\end{proof}

    Let $1< p<2$,  $g:=\log (n_0) \in L^p(D)$ and      $\varphi_0-\Delta \varphi_0\in L^p(D)$ such that $\varphi_0=0$ on $\partial D.$ \\

    Thanks to \autoref{lemma-approximation-p-less-2}, there exists a sequence  $(g_\varepsilon)_\varepsilon \subseteq W$ such that $\displaystyle\lim_{\varepsilon \to 0 }g_\varepsilon=g$ in $L^p(D)$ and such that  \eqref{properties-appro-p-less-2} holds. 
    On the other hand, by density  argument there exists a sequence $(\varphi_\varepsilon^0)_\varepsilon \subseteq Y$ such that 
    \begin{align}\label{approx-initial-data-case3-p}
     \varphi_\varepsilon^0 \to \varphi_0  \text{ in }  H^1_0(D)\cap W^{2,p}(D)  \text{ as } \epsilon \to 0.
    \end{align}
Let $\epsilon>0$, by using \autoref{THM1}, there exists  a unique   $ \varphi_\epsilon \in C([0,T];W) \cap L^2(0,T;Y)  $ such that for any $\psi \in V$ we have
     \begin{align}\label{DEF-form-epsilon-3case}
 &       \langle \partial_t(\varphi_\epsilon-\Delta\varphi_\epsilon),\psi\rangle_{V^\prime,V}+\int_D[\varepsilon\nabla(\varphi_\epsilon-\Delta\varphi_\epsilon+g_\epsilon)\cdot \nabla \psi -(g_\epsilon-\Delta \varphi_\epsilon)\nabla^\perp\varphi_\epsilon\cdot \nabla \psi]dx=0.
    \end{align}
Let $p\in ]1,2[,$ $\delta>0$ and denote by $\beta_\delta$ the even $C^1$-function defined on $\lambda\in [0,+\infty[$ by
\begin{align}\label{aprox-norm-p-2case}
    \beta_\delta(\lambda)&=\big(\lambda^p-\frac{4-p}{4}(2\delta)^p\big)1_{[2\delta,+\infty[}(\lambda)\\&\qquad+\big( p(2\delta)^{p-2}(\lambda-2\delta)^2+p(2\delta)^{p-1}(\lambda-2\delta)+\frac{p}{4}(2\delta)^p\big)1_{[\delta,2\delta[}(\lambda).\notag
\end{align}
This is a  sequence of non-negative functions such that $\displaystyle\lim_{\delta\to 0}\beta_\delta(\lambda)=\beta (\lambda)=\vert \lambda\vert^p$ point-wise, $\vert\beta_\delta(\lambda)\vert \leq 1+\vert \lambda\vert^p$. Moreover, it satisfies  $\beta_\delta^{\prime\prime}\geq 0$ a.e. and $\beta_\delta^{\prime\prime} \in L^\infty(\mathbb{R}).$\\

Note that $\psi=\beta_\delta^\prime(\varphi_\epsilon-\Delta\varphi_\epsilon+g_\epsilon) \in V, \beta_\delta(\varphi_\epsilon-\Delta\varphi_\epsilon+g_\epsilon)=0$ on $\partial D$ and by using \eqref{DEF-form-epsilon-3case}, we get

   \begin{align*}
 &       \langle \partial_t(\varphi_\epsilon-\Delta\varphi_\epsilon),\beta_\delta^\prime(\varphi_\epsilon-\Delta\varphi_\epsilon+g_\epsilon)\rangle_{V^\prime,V}+\int_D\varepsilon\underbrace {\vert\nabla(\varphi_\epsilon-\Delta\varphi_\epsilon+g_\epsilon)\vert^2 \beta_\delta ^{\prime\prime}(\varphi_\epsilon-\Delta\varphi_\epsilon+g_\epsilon)}_{\geq 0}dx\\
 &-\int_D(g_\epsilon-\Delta \varphi_\epsilon)\nabla^\perp\varphi_\epsilon\cdot \nabla \beta_\delta^\prime(\varphi_\epsilon-\Delta\varphi_\epsilon+g_\epsilon)dx=0.
 \end{align*}
 By using an integration by part formula in time (see \textit{e.g.} \cite[Lem. 5]{Tahraoui2019tools}), we get
 \begin{align*}
     \langle \partial_t(\varphi_\epsilon-\Delta\varphi_\epsilon),\beta_\delta^\prime(\varphi_\epsilon-\Delta\varphi_\epsilon+g_\epsilon)\rangle_{V^\prime,V}&=\langle \partial_t(\varphi_\epsilon-\Delta\varphi_\epsilon+g_\epsilon),\beta_\delta^\prime(\varphi_\epsilon-\Delta\varphi_\epsilon+g_\epsilon)\rangle_{V^\prime,V}\\
     &=\dfrac{d}{dt}\int_D \beta_\delta(\varphi_\epsilon-\Delta\varphi_\epsilon+g_\epsilon) dx.
 \end{align*}
 On the other hand, by using that $g_\epsilon-\Delta \varphi_\epsilon=0$ on $\partial D$ and  $\nabla^\perp\varphi_\epsilon \cdot \nabla \varphi_\varepsilon=0$, we get 
 \begin{align*}
     \int_D(g_\epsilon-\Delta \varphi_\epsilon)\nabla^\perp\varphi_\epsilon\cdot \nabla \beta_\delta^\prime(\varphi_\epsilon-\Delta\varphi_\epsilon+g_\epsilon)dx&=-     \int_D\nabla^\perp\varphi_\epsilon \cdot \nabla(g_\epsilon+\varphi_\varepsilon-\Delta \varphi_\epsilon)\beta_\delta^\prime(\varphi_\epsilon-\Delta\varphi_\epsilon+g_\epsilon)dx\\
    & =-     \int_D\nabla^\perp\varphi_\epsilon \cdot \nabla\beta_\delta(\varphi_\epsilon-\Delta\varphi_\epsilon+g_\epsilon)dx\\
   & =-     \int_{\partial D}\nabla^\perp\varphi_\epsilon \cdot  \eta \beta_\delta(\varphi_\epsilon-\Delta\varphi_\epsilon+g_\epsilon)d\sigma=0,
 \end{align*}
 since $\beta_\delta(\varphi_\epsilon-\Delta\varphi_\epsilon+g_\epsilon)=0$ on $\partial D.$ Therefore we get  \begin{align*}
\int_D \beta_\delta(\varphi_\epsilon(t)-\Delta\varphi_\epsilon(t)+g_\epsilon) dx&\leq  \int_D \beta_\delta(\varphi_\epsilon^0-\Delta\varphi_\epsilon^0+g_\epsilon) dx.
 \end{align*}
 Now, we use      Lebesgue's dominated convergence theorem, as $\delta\to 0$ to deduce for any $t\in [0,T]$
 \begin{align}\label{ineq-lowe-semi-p-less-2}
\int_D \vert\varphi_\epsilon(t)-\Delta\varphi_\epsilon(t)+g_\epsilon\vert^p dx&\leq \int_D \vert \varphi_\epsilon^0-\Delta\varphi_\epsilon^0+g_\epsilon\vert^p dx 
\leq  2^{p-1}(\Vert \varphi_\epsilon^0-\Delta\varphi_\epsilon^0\Vert_p^p+\Vert g_\epsilon\Vert_p^p).
 \end{align}
 Thus, for any $t\in [0,T]$, we have
 \begin{align*}
\int_D \vert\varphi_\epsilon(t)-\Delta\varphi_\epsilon(t)\vert^p dx \leq  4^{p-1}(\Vert \varphi_0-\Delta\varphi_0\Vert_p^p+1+2\Vert g\Vert_p^p)
 \end{align*}
 by using \eqref{properties-appro-p-less-2} and the convergence of $\varphi_\varepsilon^0 \to \varphi_0$ in $W^{2,p}(D)$. Consequently, we proved
 \begin{align}\label{bound-p-pot-pless2}
     (\varphi_\epsilon)_\varepsilon \text{ is bounded uniformly w.r.t. } \varepsilon  \text{ in } C([0,T],V\cap W^{2,p}(D)).
 \end{align}
 \subsubsection*{II. Uniform estimates of $(\partial_t\varphi_\varepsilon)_\varepsilon$ with respect to $\varepsilon$} Let $\dfrac{4}{3}\leq p<2$, 
 denote by $q$ the conjugate of $p$ and 
 note that    $$\mathcal{X}:=W^{2,q}(D)\cap V\hookrightarrow V\hookrightarrow L^2(D) \hookrightarrow V^\prime\hookrightarrow (W^{2,q}(D)\cap V)^\prime.$$ Thus,
$\partial_t(\varphi_\varepsilon-\Delta\varphi_\varepsilon) \in L^2(0,T;(W^{2,q}(D)\cap V)^\prime).$ Now, let $v\in W^{2,q}(D)\cap V$ and note that 
\begin{align*}
   \langle  \partial_t(\varphi_\varepsilon-\Delta\varphi_\varepsilon),v\rangle_{\mathcal{X}^\prime,\mathcal{X}}&:=   \langle  \partial_t(\varphi_\varepsilon-\Delta\varphi_\varepsilon),v\rangle_{V^\prime,V}.
\end{align*}
By using \eqref{DEF-form-epsilon-3case} and since $\varphi_\varepsilon \in L^2(0,T;Y)$, we get
\begin{align*}
    &  \langle  \partial_t(\varphi_\varepsilon-\Delta\varphi_\varepsilon),v\rangle_{\mathcal{X}^\prime,\mathcal{X}}=-\int_D\nabla^\perp\varphi_\varepsilon\cdot \nabla  v  \Delta \varphi_\varepsilon dx+\int_D\nabla^\perp\varphi_\varepsilon\cdot \nabla  v g_\varepsilon  dx+\varepsilon\int_D (\varphi_\varepsilon-\Delta\varphi_\varepsilon+g_\varepsilon)\Delta v  dx.
\end{align*}
Recall that $q=\dfrac{p}{p-1}>2$ and that $\mathcal{X}\hookrightarrow W^{1,\infty}(D)$.  
 On the other hand, note that 
 $$ W^{2,p}(D) \hookrightarrow  W^{1,p^*}(D) \text{ where } p^*=\dfrac{2p}{2-p},  \quad 1<p<2.$$
If  $p\geq \dfrac{4}{3}$ then  $\dfrac{1}{p}+\dfrac{1}{p^*}=1.$
Therefore, Hölder's inequality ensures
\begin{align*}
       \vert \langle \partial_t(\varphi_\varepsilon-\Delta\varphi_\varepsilon),v\rangle_{\mathcal{X}^\prime,\mathcal{X}}\vert&\leq \Vert\nabla^\perp\varphi_\varepsilon\Vert_{p^*}\Vert  \Delta \varphi_\varepsilon\Vert_p \Vert \nabla v \Vert_\infty+\Vert \nabla^\perp\varphi_\varepsilon\Vert_{p^*}\Vert g_\varepsilon\Vert_p\Vert \nabla v \Vert_\infty\\&\quad+\varepsilon\Vert \varphi_\varepsilon-\Delta\varphi_\varepsilon+g_\varepsilon\Vert_p \Vert \Delta v\Vert_q.
\end{align*}
Consequently, by using \eqref{properties-appro-p-less-2} we get
 \begin{align}\label{est-time-deriv-epsilon-pless2}
 \notag   \vert \langle \partial_t(\varphi_\varepsilon-\Delta\varphi_\varepsilon),v\rangle_{\mathcal{X}^\prime,\mathcal{X}}\vert &\leq M\big(\Vert\varphi_\varepsilon\Vert_{W^{2,p}}^2+\Vert \varphi_\varepsilon\Vert_{W^{2,p}}\Vert g_\varepsilon\Vert_p+\varepsilon\Vert \varphi_\varepsilon-\Delta\varphi_\varepsilon\Vert_p+ \varepsilon\Vert g_\varepsilon\Vert_p\big)\Vert v \Vert_\mathcal{X}\\
    &\leq M\big(\Vert\varphi_\varepsilon\Vert_{W^{2,p}}^2+\Vert \varphi_\varepsilon\Vert_{W^{2,p}}\Vert g\Vert_p+\varepsilon\Vert \varphi_\varepsilon\Vert_{W^{2,p}}+ \varepsilon\Vert g\Vert_p\big)\Vert v \Vert_\mathcal{X}
\end{align}
where $M>0$ depending only on the embeddings $\mathcal{X}\hookrightarrow W^{1,\infty}(D)$. Consequently, 
\eqref{est-time-deriv-epsilon-pless2} together with  \eqref{bound-p-pot-pless2} ensure the boundedness of $(\partial_t(\varphi_\varepsilon-\Delta\varphi_\varepsilon))_\varepsilon$
in $L^\infty(0,T;\mathcal{X}^\prime).$ Thus, the  boundedness of $(\partial_t\varphi_\varepsilon)_\varepsilon$
in $L^\infty(0,T;L^p(D)).$  
\subsubsection*{III. Compactness and passage to the limit as $\varepsilon\to 0$}
In the previous subsection we proved 
\begin{align}
 (\varphi_\varepsilon)_\varepsilon&\text{ is bounded in }    C([0,T],V\cap W^{2,p}(D)),\quad (\partial_t\varphi_\varepsilon)_\varepsilon \text{ is bounded in }
 L^\infty(0,T;L^p(D)).\label{compactness-est-1_epsilon-pless2}\end{align}
 We recall that $W^{2,p}(D) \hookrightarrow_c  W^{1,r}(D)$ for any $1\leq r<p^*=\dfrac{2p}{2-p}$ (
  see \textit{e.g.} \cite[Thm. 1.20]{Roubivcek2005nonlinear}). We introduce the set
  \begin{align*}
 \widetilde{B}_K=\{f\in L^\infty(0,T;V\cap W^{2,p}(D))&; \partial_t f\in  L^\infty(0,T;L^p(D))\\&:  \Vert f\Vert_{L^\infty(0,T;W^{2,p}(D))}+\Vert \partial_t f\Vert_{L^\infty(0,T;L^p(D))} \leq K \}, \quad K>0.      
  \end{align*}

Note that 
\begin{align}\label{strongcv-eps-pless2}
\widetilde{B}_K  \text{ is relatively compact in } C([0,T],W^{1,r}(D)) \text{ thanks to \cite[Cor. 4 ]{Simoncompact}}.
\end{align} Consequently, we get the existence of subsequence of $(\varphi_\varepsilon)_\varepsilon$ denoted by the same way  and $\varphi \in L^\infty(0,T;V\cap W^{2,p}(D))$ such that the following convergences, as $\varepsilon \to 0,$ hold
\begin{align}
    \varphi_\varepsilon &\buildrel\ast\over\rightharpoonup \varphi \text{ in } L^\infty(0,T;V\cap W^{2,p}(D)), \label{cv1-eps-pless2}\\
    \partial_t \varphi_\varepsilon &\buildrel\ast\over\rightharpoonup \partial_t\varphi \text{ in } L^\infty(0,T;L^p(D)), \label{cv2-eps-p-less2}\\
     \varphi_\varepsilon & \to \varphi \text{ in } C([0,T];W^{1,r}(D)),\quad  \forall r<p^*, \label{cv3-eps-pless2}.
\end{align}
Indeed, \eqref{cv1-eps-pless2} and \eqref{cv2-eps-p-less2} are consequence of Banach–Alaoglu theorem, \eqref{cv3-eps-pless2} is consequence of \eqref{strongcv-eps-pless2}. On the other hand, recall that \autoref{lemma-approximation-p-less-2} ensures
\begin{align}
    g_\varepsilon \to g \text{ in }  L^p(D) \label{cv4-eps-pless2}.
\end{align}
Moreover, \eqref{approx-initial-data-case3-p} and \eqref{cv3-eps-pless2}
ensures that 
\begin{align}
\varphi_\varepsilon(0)=\varphi_\varepsilon^0 \text{ converges  in } W^{1,r}(D) \text{ to } \varphi(0)=\varphi_0 \in V\cap W^{2,p}(D).     
\end{align}
Furthermore,  \eqref{compactness-est-1_epsilon-pless2} ensures that $\varphi_\varepsilon(t) \rightharpoonup \varphi(t)$ in  $V\cap W^{2,p}(D)$ for any $t\in [0,T]$ by an argument similar to that used to obtain \eqref{cv4}.\\

Let $\psi \in W^{2,q}(D)\cap V$ and recall that from \eqref{DEF-form-epsilon-3case} we have
\begin{align}\label{DEF-form-epsilon---p2}
 &       \langle \partial_t(\varphi_\epsilon-\Delta\varphi_\epsilon),\psi\rangle_{\mathcal{X}^\prime,\mathcal{X}}-\int_D (g_\epsilon-\Delta \varphi_\epsilon)\nabla^\perp\varphi_\epsilon\cdot \nabla \psi dx=\varepsilon\int_D(\varphi_\epsilon-\Delta\varphi_\epsilon+g_\epsilon)\Delta \psi.
    \end{align}
    Thanks to \autoref{lemma-approximation-p-less-2} and 
  \eqref{compactness-est-1_epsilon-pless2}, we get 
  \begin{align*}
      \lim_{\varepsilon \to 0} \varepsilon\int_D(\varphi_\epsilon-\Delta\varphi_\epsilon+g_\epsilon)\Delta \psi=0.
  \end{align*}
Since $\frac{4}{3}<p<2 $,  we have $L^r(D)\hookrightarrow L^{p^*}(D)\hookrightarrow L^q(D)$ and  $\Delta \varphi\nabla^\perp\varphi\in L^\infty(0,T;L^1(D)).$ Thus 
  \begin{align}
      \int_D \Delta \varphi_\epsilon\nabla^\perp\varphi_\epsilon\cdot \nabla \psi dx \to        \int_D \Delta \varphi\nabla^\perp\varphi\cdot \nabla \psi dx, \quad \text{ as } \varepsilon \to 0
  \end{align}
  since $\nabla \psi\in L^\infty(D).$ 
  Therefore, it is possible to use \eqref{cv1-eps-pless2}-\eqref{cv4-eps-pless2} and pass to the limit as $\varepsilon \to 0$ in \eqref{DEF-form-epsilon---p2} to get
\begin{align}\label{DEF-weak-solu-p-less2}
 &       \langle \partial_t(\varphi-\Delta\varphi),\psi\rangle_{\mathcal{X}^\prime,\mathcal{X}}-\int_D (g-\Delta \varphi)\nabla^\perp\varphi\cdot \nabla \psi dx=0, \quad \forall \psi \in W^{2,q}(D)\cap V.
    \end{align}
  In conclusion, we proved  the existence of $\varphi$ satisfying
    \begin{itemize}
        \item $\varphi \in  C([0,T];W^{1,r}(D)) \text{ for any } 1\leq r < \frac{2p}{2-p}  \text{ and }  \varphi \in L^\infty(0,T;V\cap W^{2,p}(D)). $
        \item $\varphi \in C([0,T];V\cap W^{2,p}(D)-w)$, see \textit{e.g.}  \cite[Lem 1.4 p. 263]{Temam2024navier}.
   \item $\varphi$ satisfies $\varphi(0)=\varphi_0$ and  solves \eqref{HM-eqn} in the following sense   
      \begin{align}\label{limlit1-V}
 &       \langle \partial_t(\varphi-\Delta\varphi),\psi\rangle_{W^\prime,W}-\int_D(g-\Delta \varphi)\nabla^\perp\varphi\cdot \nabla \psi dx=0, \quad \forall \psi \in W^{2,q}(D)\cap V.
    \end{align}  
     \end{itemize}
    By using the lower semicontinuity of the  weak convergence in $V\cap W^{2,p}(D)$, \eqref{ineq-lowe-semi-p-less-2} \autoref{lemma-approximation-p-less-2} and  \eqref{approx-initial-data-case3-p}, we get
     \begin{align}\label{inequality-uniq-Yodu-2}
\text{ for any } t\in [0,T]: \quad
\int_D \vert\varphi(t)-\Delta\varphi(t)+g\vert^p dx \leq \int_D \vert \varphi_0-\Delta\varphi_0+g\vert^p dx. 
 \end{align}
 Thus, there exists $\mathcal{K}_*>0$ independent of $p$ such that
   \begin{align*}
   \sup_{t\in[0,T]}   \Vert \varphi(t)-\Delta\varphi(t)\Vert_p \leq  \mathcal{K}_*(\Vert \varphi_0-\Delta\varphi_0\Vert_p+\Vert g\Vert_p).
 \end{align*}

\subsection*{Acknowledgements}  The research of F.F. and Y.T. is
	funded by the European Union (ERC, NoisyFluid, No. 101053472). Views and
	opinions expressed are however those of the authors only and do not necessarily
	reflect those of the European Union or the European Research Council. Neither
	the European Union nor the granting authority can be held responsible for them.
      \bibliography{HM-bibliography}{}
\bibliographystyle{plain}

\end{document}